\newtheorem{theorem}[subsection]{Theorem}
\newtheorem{proposition}[subsection]{Proposition}
\newtheorem{corollary}[subsection]{Corollary}
\newtheorem{lemma}[subsection]{Lemma}
\theoremstyle{definition}
\newtheorem{definition}[subsection]{Definition}
\newtheorem{remark}[subsection]{Remark}
\numberwithin{equation}{subsection}
\begin{document}

\title {Hypergeometric $\mathcal D$-modules and exponential sums for reductive groups}
\thanks{We would like to thank Qifeng Li, Daqing Wan and Dingxin Zhang for helpful discussions. 
The research of Lei Fu is supported by the National Key R\&D Program of China 2023YFA10097032023YFA1009703.}

\author{Lei Fu}
\address{Yau Mathematical Sciences Center, Tsinghua University, Beijing 100084, P. R. China}
\email{leifu@tsinghua.edu.cn}
\author {Xuanyou Li}
\address{Qiuzhen College, Tsinghua University, Beijing 100084, P. R. China}
\email{lixuanyo21@mails.tsinghua.edu.cn}

\date{}
\maketitle

\begin{abstract}
We define the hypergeometric exponential sum associated to a family of representations of a reductive group 
over a finite field. We introduce the hypergeometric $\ell$-adic sheaf to describe the hypergeometric exponential sum. 
Motivated by the definition of the hypergeometric sheaf, we introduce the hypergeometric $\mathcal D$-module, 
prove it is holonomic and estimate its rank. 
Using the theory of the Fourier transform for vector bundles over a general 
base developed by Wang, we show how the hypergeometric $\mathcal D$-module controls the general behavior of the 
hypergeometric sheaf. We apply our results to the estimation of the hypergeometric exponential sum. 

\medskip
\noindent {\bf Key words:} Fourier transform; spherical variety; hypergeometric $\ell$-adic sheaf; 
hypergeometric $\mathcal D$-module.

\medskip
\noindent {\bf Mathematics Subject Classification:} Primary 14F10, 14F20;
Secondary 14M27, 11L07.

\end{abstract}

\section*{Introduction}

Let $k$ be a finite field with $q$ elements of characteristic $p$. For 
any finite dimensional vector space $V$ over $k$, let $\mathrm{GL}(V)$ be the general linear algebraic
group $k$-scheme, and let $\mathrm{End}(V)$ be the ring $k$-scheme parametrizing linear endomorphisms of $V$. For any 
$k$-scheme $X$ and any extension $k'$ of $k$, denote by 
$X(k')$ the set of $k'$-points in $X$. For any regular function $f\in\mathcal O_X(X)$, 
denote also by $f$ the $k$-morphism $f: X\to\mathbb A^1_k$ corresponding to the $k$-homomorphism
$$k[x]\to \mathcal O(X) ,\quad x\mapsto f.$$ 
Our motivation comes from the study of exponential sums. Let $\ell$ be a prime number distinct from $p$. 
Fix a nontrivial additive character $\psi: k\to\overline {\mathbb Q}_\ell^\ast$. In number theory, many problems lead to the study of the exponential sum 
$$S=\sum_{x\in X(k)} \psi(f(x)).$$ In this paper, we are concerned with the case where $X$ is a reductive group.

\subsection{Hypergeometric exponential sums on a reductive group}

Let $G$ be a reductive group over $k$, and let $$\rho_j: G\to \mathrm{GL}(V_j)\quad (j=1, \ldots, N)$$ be 
a family of irreducible representations of $G$, 
where $V_j$ are finite dimensional vector spaces over $k$. For any $k$-point $A=(A_1,\ldots, A_N)$ of 
$\prod_{j=1}^N\mathrm{End}(V_j)$, 
consider the morphism
\begin{eqnarray}\label{eqn:laurent1}
f: G\to\mathbb A_k^1, \quad g\mapsto\sum_{j=1}^N \mathrm{Tr}(A_j \rho_j(g)).
\end{eqnarray}
It defines a regular function on $G$. We call it a \emph{Laurent polynomial}. (One can show that over
algebraic closed ground field, all regular functions are Laurent polynomials.)
In this paper, we study the exponential sum
\begin{eqnarray}\label{eqn:Hypsum}
\mathrm{Hyp}(A)=\sum_{g\in G(k)} \psi(f(g))=\sum_{g\in G(k)} \psi\Big(\sum_{j=1}^N \mathrm{Tr}(A_j\rho_j(g))\Big)
\end{eqnarray} defined by the Laurent polynomial (\ref{eqn:laurent1}).
We call it the \emph{hypergeometric exponential sum} associated to the representations $\rho_j$ ($j=1,\ldots, N$). 

Consider the special case where $G=\mathbb G_m^n$ is a torus. 
Then each $\rho_j$ is a character
$$\rho_j: \mathbb G_m^n\to \mathrm{GL}(1, k), \quad (t_1, \ldots, t_n)\mapsto t_1^{w_{1j}}\cdots t_n^{w_{nj}}.$$  
For any $k$-point $A=(a_1, \ldots, a_N)\in k^n$ in $\prod_{j=1}^N\mathrm{End}(1)$, the Laurent polynomial 
\begin{eqnarray*}
f=\sum_{j=1}^N a_j t_1^{w_{1j}}\cdots t_n^{w_{nj}}\in k[t_1^{\pm 1}, \ldots, t_n^{\pm 1}]
\end{eqnarray*}
is a regular function on $G$.
The corresponding exponential sum 
\begin{eqnarray*}
\mathrm{Hyp}(a_1, \ldots, a_n)=\sum_{t_1,\ldots, t_n\in k^\ast}
\psi\Big(\sum_{j=1}^N a_j t_1^{w_{1j}}\cdots
t_n^{w_{nj}}\Big)
\end{eqnarray*} is called the Gelfand-Kapranov-Zelevinsky (GKZ) hypergeometric
exponential sum in \cite{F2}. This sum is studied in \cite{AS1,DL,F2}. 

\subsection{Hypergeometric sheaf}
The Lang isogeny 
$$L: \mathbb G_{a, k}\to \mathbb G_{a,k} \quad x\mapsto x^q-x$$ on the additive group $\mathbb G_{a,k}\cong \mathbb A^1$ 
is an \'etale isogeny and its kernel is the additive group $\mathbb A^1(k)=k$. It is a 
$k$-torsor. Pushing forward this torsor by the additive character $\psi^{-1}:k\to\overline{\mathbb Q}_\ell^*$, 
we get a lisse $\overline{\mathbb Q}_\ell$-sheaf $\mathcal L_\psi$ on 
$\mathbb A^1$, which we call the \emph{Artin-Schreier sheaf}. 
For any $k$-rational point $x\in \mathbb A^1(k)=k$, we have 
$$\mathrm{Tr}(\mathrm{Frob}_x, \mathcal L_{\psi, \bar x})=\psi(x),$$ 
where $\mathrm{Frob}_x$ is the geometric Frobenius element at $x$. 
By the Grothendieck trace formula \cite[Rapport 3.2]{SGA41/2}, we have the following expression for the 
hypergeometric exponential sum (\ref{eqn:Hypsum})
\begin{eqnarray*}
\mathrm{Hyp}(A)&=& 
\sum_{i=0}^{2\mathrm{dim}\,G} \mathrm{Tr}\Big(\mathrm{Fr},H^i_c(G\otimes_{k}\bar k, f^*\mathcal L_\psi)\Big),
\end{eqnarray*} where $\mathrm{Fr}$ is the geometric Frobenius correspondence.

For any vector space $\mathbb V$ over $k$, denote also by $\mathbb V$ the underlying $k$-scheme. 
Take $\mathbb V=\prod_{j=1}^N \mathrm{End}(V_j)$. Let $D_c^b(\mathbb V,\overline{\mathbb Q}_\ell)$
be the derived category of $\overline{\mathbb Q}_\ell$-sheaves with constructible cohomologies constructed in 
\cite{BS}.

\begin{definition}\label{defn:hypsheaf} Let $\pi_2:G\times_k \mathbb V\to \mathbb V$ be the projection,  let $F$ be the morphism
$$F:G\times_k  \mathbb V\to
\mathbb A_k^1,\quad F(g,A)=
\sum_{j=1}^N\mathrm{Tr}(A_j\rho_j(g)),$$ 
and let $d=\mathrm{dim}\, G$, $m=\mathrm{dim}\, \mathbb V.$
We define the
\emph{$\ell$-adic hypergeometric sheaves} $\mathrm{Hyp}_{\psi,!}, \mathrm{Hyp}_{\psi,*}, \in \mathrm{ob}\,
D_c^b(\mathbb V, \overline{\mathbb Q}_\ell)$ associated to the additive character $\psi$
and the representations $\rho_j$ $(j=1, \ldots, N)$ by
$$\mathrm{Hyp}_{\psi, !}:=R\pi_{2!}F^\ast \mathcal
L_\psi[d+m],\quad  \mathrm{Hyp}_{\psi, *}:=R\pi_{2*}F^\ast \mathcal
L_\psi[d+m].$$
\end{definition}

For any $k$-point $A=(A_1,\ldots, A_N)\in \prod_{j=1}^N\mathrm{End}(V_i) (k)$, we have
\begin{eqnarray*}
\mathrm{Tr}(\mathrm
{Frob}_A,(\mathrm{Hyp}_{\psi,!})_{\bar A})=(-1)^{d+m} 
\mathrm{Tr}(\mathrm{Fr},R\Gamma_c(G\otimes_{k}\bar k, f^*\mathcal L_\psi))=(-1)^{d+m} \mathrm{Hyp}(A).
\end{eqnarray*}
So $\mathrm{Hyp}_{\psi, !}$ describes 
how the family of exponential sums $\mathrm{Hyp}(A)$ behaves as $A$ varies.
We have 
$$\mathrm{Hyp}_{\psi,*}(d+m)\cong \mathbb D_{\mathbb V}(\mathrm{Hyp}_{\psi^{-1},!}),$$ 
where $\mathbb D_{\mathbb V}=R\mathcal Hom(-, \overline{\mathbb Q}_\ell)(m)[2m]$ is the Verdier duality functor. 
In section 1, we prove the following.

\begin{proposition}\label{perverse} Suppose the morphism
$$\iota: G\to \mathbb V=\prod_{j=1}^N \mathrm{End}(V_i), \quad \iota(g)= (\rho_1(g), \ldots, \rho_N(g))$$
is quasi-finite. Then $\mathrm{Hyp}_{\psi, !}$ and $\mathrm{Hyp}_{\psi, *}$ are mixed perverse sheaves on 
$\mathbb V=\prod_{j=1}^N \mathrm{End}(V_i)$. The 
weights of $\mathrm{Hyp}_{\psi, !}$ are $\leq \mathrm{dim}\,G+\mathrm{dim}\,\mathbb V$.
\end{proposition}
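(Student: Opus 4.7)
The plan is to recognize $\mathrm{Hyp}_{\psi,!}$ as the Fourier--Deligne transform on $\mathbb V$ of a concrete complex, and then to combine standard perversity and weight results for the Fourier transform with the $t$-exactness of $\iota_!$ under the quasi-finite affine hypothesis. First, identify $\mathbb V$ with its own linear dual via the non-degenerate trace form $\mu : \mathbb V \times \mathbb V \to \mathbb A^1_k$, $\mu(B,A) = \sum_j \mathrm{Tr}(B_j A_j)$. Then $F = \mu \circ (\iota \times \mathrm{id}_{\mathbb V})$, so proper base change and the projection formula give
$$\mathrm{Hyp}_{\psi,!}\;\cong\;R\pi_{2!}\bigl(\pi_1^\ast \iota_!\overline{\mathbb Q}_\ell \otimes \mu^\ast \mathcal L_\psi\bigr)[d+n]\;\cong\;\mathrm{FT}_\psi\bigl(\iota_!\overline{\mathbb Q}_\ell[d]\bigr),$$
where $\mathrm{FT}_\psi(K) := R\pi_{2!}(\pi_1^\ast K \otimes \mu^\ast \mathcal L_\psi)[n]$ denotes the Fourier--Deligne transform on the self-dual vector space $\mathbb V$.

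Next I would show that $\iota_!\overline{\mathbb Q}_\ell[d]$ is a mixed perverse sheaf of weight $\leq d$ on $\mathbb V$. Since $G$ is smooth (reductive groups are smooth) and equidimensional of dimension $d$, $\overline{\mathbb Q}_\ell[d]$ is a perverse sheaf on $G$, pure of weight $d$. The morphism $\iota$ is affine, because both $G$ and $\mathbb V$ are affine $k$-schemes, and it is quasi-finite by hypothesis. For a quasi-finite affine morphism, $f_!$ and $f_*$ are $t$-exact for the perverse $t$-structure; this combines Artin's affine vanishing with the Zariski main theorem factorization through a finite morphism. Hence $\iota_!\overline{\mathbb Q}_\ell[d]$ is perverse, and Deligne's theorem \cite[3.3.1]{D} says $\iota_!$ does not increase weights, so its weight is $\leq d$.

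Finally, Brylinski's theorem (see also Katz--Laumon) states that $\mathrm{FT}_\psi$ is $t$-exact on $D^b_c(\mathbb V, \overline{\mathbb Q}_\ell)$ for the perverse $t$-structure, so $\mathrm{Hyp}_{\psi,!}$ is perverse; tracking the weights through $\pi_1^\ast$, $\otimes\,\mu^\ast\mathcal L_\psi$ (pure of weight $0$), $R\pi_{2!}$ (all of which preserve or lower weights) and the shift $[n]$ (which raises weights by $n$) yields the bound $\leq d+n$. For $\mathrm{Hyp}_{\psi,\ast}$, the Verdier duality $\mathrm{Hyp}_{\psi,\ast}(d+n) \cong D_{\mathbb V}(\mathrm{Hyp}_{\psi^{-1},!})$ recorded in the excerpt, together with the fact that Verdier duality preserves mixed perverse sheaves, shows that $\mathrm{Hyp}_{\psi,\ast}$ is a mixed perverse sheaf.

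The main obstacle I anticipate is the $t$-exactness of $\iota_!$ in the second step: this is precisely where the quasi-finiteness hypothesis is essential. Without it, $\iota_!$ is only right $t$-exact from affineness, so $\iota_!\overline{\mathbb Q}_\ell[d]$ might fail to be perverse and the Fourier strategy would break down. Everything else — the Fourier identification, the $t$-exactness of $\mathrm{FT}_\psi$, and the Verdier duality step — is a formal application of standard machinery.
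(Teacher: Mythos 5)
Your proposal is correct and follows essentially the same route as the paper: identify $\mathrm{Hyp}_{\psi,!}$ with the Deligne--Fourier transform of $\iota_!\overline{\mathbb Q}_\ell[d]$ via proper base change and the projection formula, use the quasi-finite affine hypothesis to get perversity of $\iota_!\overline{\mathbb Q}_\ell[d]$ (this is \cite[4.1.3]{BBD}), apply $t$-exactness of the Fourier transform (\cite[1.3.2.3]{L}), and invoke Weil II for the weight bound. The only cosmetic differences are that the paper obtains the weight bound by applying Deligne's theorem directly to $R\pi_{2!}F^\ast\mathcal L_\psi[d+n]$ rather than tracking weights through the Fourier transform, and handles $\mathrm{Hyp}_{\psi,*}$ by repeating the argument with $R\iota_*$ instead of via Verdier duality; both variants are valid.
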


\subsection{} Parallel to the hypergeomeric $\ell$-adic sheaves, we have the 
$\ell$-adic hypergeometric $\mathcal D$-modules. 
We now work over an algebraically closed ground field $\overline K$ of characteristic $0$. 
For any smooth complex variety $X$, let $\mathcal D_X$ be the sheaf of differential operators, and let 
$$D^b_{qc}(\mathcal D_X), \; D^b_{c}(\mathcal D_X), \; D^b_h(\mathcal D_X), \; D^b_{rh}(\mathcal D_X)$$ be 
the derived category of bounded complexes of left $\mathcal D_X$-modules with cohomology sheaves being 
quasi-coherent, coherent, holonomic, holonomic with regular singularities, 
respectively. Similarly we have
the derived category of right $\mathcal D_X$-modules 
$$D^b_{qc}(\mathcal D_X^{\mathrm{op}}), \; D^b_{c}(\mathcal D_X^{\mathrm{op}}), \; D^b_h(\mathcal D_X^{\mathrm{op}}), \; 
D^b_{rh}(\mathcal D_X^{\mathrm{op}}).$$
We mainly works with the derived category $D^b_h(\mathcal D^{\mathrm{op}})$ on which we have the six-functor 
formalism by \cite[3.2]{Hottabook}.
On $\mathbb A^1=\mathrm{Spec}\,\overline K[x]$, let $\mathcal L$ be the right $\mathcal D_{\mathbb A^1}$-module 
$$\mathcal L:=\mathcal D_{\mathbb A^1}/(1-\partial_x)\mathcal D_{\mathbb A^1}.$$
Formally, we may write sections of $\mathcal L$ as $fe^{-x}\mathrm dx$ with 
$f$ being a section of $\mathcal O_{\mathbb A^1}$,
and $$(fe^{-x}\mathrm dx)\partial_x=-L_{\partial_x}(fe^{-x}\mathrm dx)=\Big(-\frac{\mathrm df}{\mathrm dx}+f\Big)e^{-x}\mathrm dx,$$
where $L_{\partial_x}$ is the Lie derivative. Motivated by Definition \ref{defn:hypsheaf} for hypergeometric $\ell$-adic sheaves, 
we define the hypergeometric $\mathcal D$-modules as follows.

\begin{definition}\label{defn:hypergeometricD} Let $G$ be a reductive algebraic group over $\overline K$, 
$\rho_j: G\to\mathrm{GL}(V_j)\quad (j=1, \ldots, N)$
a family of representations of $G$, $\mathbb V=\prod_{j=1}^N \mathrm{End}(V_j)$, 
$\pi_2:G\times \mathbb V\to \mathbb V$ the projection,  and
$F$ the morphism
$$F:G\times   \mathbb V\to
\mathbb A^1,\quad F(g,A)=
\sum_{j=1}^N\mathrm{Tr}(A_j\rho_j(g)).$$ 
We define the
\emph{hypergeometric $\mathcal D$-modules} $\mathcal Hyp_+, \mathcal Hyp_! \in \mathrm{ob}\,
D_h^b(\mathcal D_{\mathbb V}^{\mathrm{op}})$ associated to the representations 
$\rho_j$ $(j=1, \ldots, N)$ by
$$\mathcal Hyp_+:=R\pi_{2+}F^\ast \mathcal
L,\quad  \mathcal Hyp_!:=R\pi_{2!}F^\ast \mathcal
L.$$
\end{definition}

Let $\mathcal L^\vee$ be the right $\mathcal D_X$-module $\mathcal L^\vee=
\mathcal D_{\mathbb A^1}/(1+\partial_x)\mathcal D_{\mathbb A^1}$.
We have $$\mathbb D(R\pi_{2+}F^\ast \mathcal
L)\cong R\pi_{2!}F^\ast \mathcal
L^\vee.$$ 
In section 1, we also prove the following.

\begin{proposition}\label{holonomic} Suppose the morphism
$$\iota: G\to\prod_{j=1}^N \mathrm{End}(V_i), \quad \iota(g)= (\rho_1(g), \ldots, \rho_N(g))$$
is quasi-finite. Then $\mathcal Hyp_+$ and $\mathcal Hyp_!$ are holonomic $\mathcal D_{\mathbb V}$-modules.
\end{proposition}

Our goal is to describe the Zariski open set $U$ of $\mathbb V$ so that  
$\mathcal Hyp_+|_U$ and $\mathcal Hyp_!|_U$ are integrable connections, and 
estimate their ranks.

\subsection{Nondegenerate Laurent polynomials}\label{nondegeneracy} Let $B$ be a 
Borel subgroup of $G$, $T$ the maximal torus of $B$, 
$\Lambda=\mathrm{Hom}(T, \mathbb G_m)$ the weight lattice, 
$W=N_G(T)/T$ the Weyl group, and
$\lambda_j$ $(j=1, \ldots, N)$ the maximal weights of $\rho_j$. We define the \emph{Newton 
polytope at infinity} $\Delta_\infty$ for the family $\rho_1, \ldots, \rho_N$ to be the convex hull in 
$\Lambda_{\mathbb R}:=\Lambda\otimes \mathbb R$ of $W(0, \lambda_1, \ldots, \lambda_N)$. Occasionally we also 
use the \emph{Newton polytope} $\Delta$ which is defined to be the convex hull of $W(\lambda_1, \ldots, \lambda_N)$. 
For any face $\tau\prec \Delta_\infty$, let $e(\tau)=(e(\tau)_j)\in\prod_{j=1}^N \mathrm{End}(V_j)$ 
be defined as follows: Let
$$V_j=\bigoplus_{\lambda\in \Lambda} V_j(\lambda)$$ be the decomposition so that 
$V_j(\lambda)$ is the component of weight $\lambda$ under the action of $T$. We define $e(\tau)_j$ to be the block-diagonal 
linear transformation so that 
\begin{eqnarray}\label{etau}
e(\tau)_j\Big|_{ V_j(\lambda)}=\begin{cases}
\mathrm{id}_{V_j(\lambda)}&\hbox{if }\lambda\in \tau, \\
0&\hbox{otherwise}.
\end{cases}
\end{eqnarray}
For any point $A=(A_1, \ldots, A_N)$ in $\mathbb V=\prod_{j=1}^N\mathrm{End}(V_j)$, let $f_A$ be the morphism 
$$f_A:\mathbb V\to 
\mathbb A^1,\quad f_A(B)=\sum_{j=1}^N\mathrm{Tr}(A_jB_j).$$ 
We have an action $$(G\times G)\times \mathbb V\to \mathbb V, \quad
\Big((g, h), (B_1, \ldots, B_N)\Big)\mapsto \Big(\rho_1(g) B_1 \rho_1(h^{-1}), \ldots, \rho_N(g) B_N\rho_N(h^{-1})\Big).$$
The Laurent polynomial 
\begin{eqnarray}\label{generalLaurent}
f: G\to\mathbb A^1, \quad f(g)=\sum_{j=1}^N \mathrm{Tr}(A_j \rho_j(g)).
\end{eqnarray} 
determines the restriction of $f_A$ to the $(G\times G)$-orbit of 
$(\mathrm{id}_{V_1},\ldots, \mathrm{id}_{V_N})$. 

\begin{definition} We say the Laurent polynomial (\ref{generalLaurent})
is \emph{nondegenerate} if for 
any nonempty face $\tau$ of $\Delta_\infty$ not containing the origin, the restriction of $f_A$ to the orbit $Ge(\tau) G$
has no critical points, that is, the function 
\begin{eqnarray}\label{ftauA}
f_{\tau, A}:G\times G
\to\mathbb A^1, \quad f_{\tau, A}(g, h)=\sum_{j=1}^N \mathrm{Tr}(A_j \rho_j(g)e(\tau)_j\rho_j(h^{-1}))
\end{eqnarray}
has no critical point, which means that $\mathrm df_{\tau, A}=0$ has no solution 
on $G\times G$. Otherwise, we say $f$ is \emph{degenerate}. 
\end{definition}

Let $\mathbb V^{\mathrm{degen}}$ be the set of points $A$ so that the corresponding Laurent polynomial 
(\ref{generalLaurent}) is degenerate. In the Appendix (Proposition \ref{degenclosed}), 
we prove that $\mathbb V^{\mathrm{degen}}$ is a Zariski closed subset of $\mathbb V$.
Let $\mathbb V^{\mathrm{gen}}$ be 
the complement in $\mathbb V$ of $\mathbb V^{\mathrm{degen}}$. 
Then $\mathbb V^{\mathrm{gen}}$ is a Zariski open subset of $\mathbb V$ parametrizing nondegenerate Laurent polynomials.
The main result of the paper is the following. 

\begin{theorem}\label{thm:hypD} Suppose the morphism $\iota: G\to\prod_{j=1}^N \mathrm{End}(V_j)$ 
is quasi-finite. The hypergeometric $\mathcal D_{\mathbb V}$-modules $\mathcal Hyp_{+}$ and 
$\mathcal Hyp_{!}$ are holonomic, and their restrictions to $\mathbb V^{\mathrm{gen}}$ are integrable connections with rank 
$$\leq d!\int_{\Delta_\infty\cap\mathfrak C} 
\prod_{\alpha \in R^+} \frac{\lambda(H_\alpha)^2}{\rho(H_\alpha)^2}\mathrm d\lambda,$$
where 
$d=\mathrm{dim}\,G$, $\mathfrak C$ is the dominant Weyl chamber in 
$\Lambda_{\mathbb R}$, $R^+$ is the set of positive roots, 
$\{H_\alpha:\alpha\in R\}$ is the set of co-roots, 
$\rho=\frac{1}{2}\sum_{\alpha\in R^+}\alpha$, and $\mathrm d\lambda$ is the 
Lebesgue measure on $\Lambda_{\mathbb R}$
normalized by $\mathrm{vol}(\Lambda_{\mathbb R}/\Lambda)=1$. 
\end{theorem}

The general principle is that hypergeometric $\mathcal D$-modules control the generic behavior 
of the hypergeometric exponential sum. Let us make it precise. 

Let $R$ be a Dedekind domain so that its reside fields at maximal ideals are finite fields, 
and its fraction field $K$ is of characteristic $0$. Fix an algebraic closure $\overline K$ of $K$. 
Let $G_R$ be a split reductive group $R$-scheme, and let
$$\rho_{j, R}: G_R\to \mathrm{GL}(V_{j,R})\quad (j=1,\ldots, N)$$ 
be representations so that $\rho_{j, \overline K}\to \mathrm{GL}(V_{j,\overline K})$ are irreducible, where 
$V_{j,R}$ are projective $R$-modules of finite ranks, and $V_{j,\overline K}:=V_{j, R}\otimes_R\overline K$. 
Let $\Delta_\infty$ be the Newton polytope of $\rho_{j, \overline K}$ $(j=1, \ldots, N)$. As an application of 
Theorem \ref{thm:hypD}, we prove the following. 

\begin{theorem}\label{thm:expsum} Notation as above.
Suppose the morphism $$\iota_R: G_R\to\prod_{j=1}^N \mathrm{End}(V_j),\quad g\mapsto (\rho_{1,R}(g),
\ldots, \rho_{N, R}(g))$$ 
is quasi-finite. Let $A=(A_1, \ldots, A_N)\in \prod_{j=1}^N \mathrm{End}(V_{j, R})$
such that the Laurent polynomial $f_A(g)=\sum_{j=1}^N \mathrm{Tr}(A_j\rho_j(g))$ is nondegenerate over $\overline K$. 
Then there exists a finite set $S$ of maximal ideals $R$ such that for any maximal ideal $\mathfrak m\not\in S$, 
any finite extension $k'$ of the residue field $k=R/\mathfrak m$ and any nontrivial additive character $\psi:k\to\overline {\mathbb Q}_\ell$,
we have
$$\Big\vert \sum_{g\in G(k')} \psi \Big(\mathrm{Tr}_{k'/k}\Big(\sum_{j=1}^N 
\mathrm{Tr}(A_{j}\rho_j(g))\Big)\Big)\Big\vert\leq  q'^{\frac{d}{2}}d!\int_{\Delta_\infty\cap\mathfrak C} 
\prod_{\alpha \in R^+} 
\frac{\lambda(H_\alpha)^2}{\rho(H_\alpha)^2}\mathrm d\lambda,$$ where $q'$ is the number of elements in $k'$. 
\end{theorem}

The paper is organized as follows. In section 1, we express the hypergeometric sheaves in terms of the Deligne-Fourier transform. 
Motivated by this expression, we introduce the hypergeometric sheaves over a general base
using the theory of the Fourier transform over a general base developed by Wang (\cite{Wang}). Over the field of complex numbers, 
hypergeometric sheaves are the solution complexes of the hypergeometric $\mathcal D$-modules. 
In section 2, we calculate the 
$\mathcal D$-modules involved in the definition of hypergeometric $\mathcal D$-modules. 
In section 3, we prove Theorem \ref{thm:hypD}. In section 4, we use information on the hypergeometric $\mathcal D$-modules 
over the characteristic 0 ground field $K$ to get results for the hypergeometric sheaves over the field $k$ of characteristic $p$.
We deduce Theorem \ref{thm:expsum} from the properties of the hypergeometric sheaves. In the Appendix, we prove some
technical results used in the paper. 

\section{Fourier transformations and hypergeometric sheaves}

\subsection{The Deligne-Fourier transform}

In this subsection, we work with a ground field $F$ containing the finite field is $k$.
Let $S$ be a scheme of finite type over $F$, let 
$\mathbb V$ be a vector bundle over $S$ of rank $m$, and let $\mathbb V^\vee$ be the dual vector bundle of $\mathbb V$. 
Recall that the \emph{Deligne-Fourier transform} associated to a nontrivial additive
character $\psi:k\to\overline{\mathbb Q}_\ell^*$  is the functor 
$$\mathfrak F_{\psi}: D_c^b(\mathbb V,\overline{\mathbb Q}_\ell)\to D_c^b(\mathbb V^\vee,\overline {\mathbb Q}_\ell),\quad
\mathfrak F_{\psi}(K)=Rp_{2!}(p_1^\ast K\otimes \langle\;,\;\rangle^\ast \mathcal
L_{\psi})[m],$$ where
$$p_1: \mathbb V\times_S \mathbb V^\vee\to V,\quad
p_2:\mathbb V\times_S\mathbb  V^*\to \mathbb V^\vee$$ are the
projections, and $\langle\,,\,\rangle$ is the morphism defined by the duality pairing 
$$\langle\,,\,\rangle:\mathbb V\times_S \mathbb V^\vee \to\mathbb
A_k^1.$$ Confer \cite{L} for properties of the
Deligne-Fourier transform.

\subsection{The Wang-Fourier transform}

In \cite{Wang}, Wang introduces a Fourier transform functor for monodromic sheaves 
on a vector bundle over a scheme $S$ which is of finite type over a field $F$ of arbitrary characteristic, 
compatible with the Deligne-Fourier transform for monodromic $\ell$-adic sheaves in characteristic $p$, and with 
the usual Fourier transform for monodromic $\mathcal D$-modules in characteristic $0$ via the Riemann-Hilbert correspondence. 
Wang's construction can be generalized to the case where the ground field $F$ is replaced by a regular noetherian 
scheme of dimension $\leq 1$.

\begin{definition} Let $T$ be a regular noetherian scheme of dimension $\leq 1$, $S$ a separated scheme of finite type over $T$, 
$\mathbbm 1: S\to \mathbb{A}_S^1$ the $1$-section of $\mathbb A_S^1$ defined by the $\mathcal O_S$-algebra morphism 
$$\mathcal O_S[t]\to \mathcal O_S, \quad t\mapsto 1,$$
$u:\mathbb{A}_S^1-\{1\}\hookrightarrow\mathbb{A}_S^1$ the open immersion for the complement of the 1-section, and 
$$B=Ru_* \overline{\mathbb Q}_\ell.$$ 
For any vector bundle $\mathbb V$ of rank $m$ over $S$, let $\mathbb V^\vee$ be its dual, 
$p_1:\mathbb V^\vee \times_S \mathbb V \to \mathbb V$ and $p_2: 
\mathbb V^\vee \times_S \mathbb V \to \mathbb V^\vee$ 
the projections, and $\langle\;,\;\rangle : \mathbb V^\vee \times_S \mathbb V \to \mathbb{A}_S^1$
the canonical pairing. The \emph{Wang-Fourier transform} is defined to be 
\begin{eqnarray*}
\mathrm{Four}_{B}:D_{c}^{b}(\mathbb V, \overline {\mathbb Q}_\ell)\rightarrow  D_{c}^{b}(\mathbb V^{\vee},\overline {\mathbb Q}_\ell),\quad 
\mathrm{Four}_{B}(K)= Rp_{2!}(p_1^*K\otimes\langle\;,\;\rangle^*B)[m].
\end{eqnarray*}
\end{definition}

\begin{remark} We assume the base $T$ is a regular noetherian scheme of dimension $\leq 1$ so that we can apply 
\cite[Finitude 1.5-1.7]{SGA41/2} to conclude that the Grothendieck six operators are defined on the category 
$D^b_c(\hbox{-}, \overline{\mathbb Q}_\ell)$. 
\end{remark}

\begin{definition} Let 
$\mathrm{pr}_2:\mathbb{G}_{m,S}\times_S \mathbb V\to  \mathbb V$ be the projection, and let 
$\theta(n)$ be the morphism 
$$\theta(n): \mathbb{G}_{m,S}\times_S  \mathbb V\to  \mathbb V,\quad (\lambda,v)\to \lambda^n v$$
An object $K\in D_c^{b}( \mathbb V, \overline {\mathbb Q}_\ell)$ is called \emph{monodromic} if there exists an integer $n$ invertible on $S$ 
such that we have an isomorphism $\theta(n)^*K\cong\mathrm{pr}_2^*K$.
Denote the full subcategory of $D_c^{b}( \mathbb V, \overline {\mathbb Q}_\ell)$ formed by monodromic objects by  
$D_{\mathrm{mon}}^b( \mathbb V, \overline {\mathbb Q}_\ell)$.
\end{definition}

\begin{proposition}\label{WangDeligne} ${}$ 

\begin{enumerate}[(i)]
\item $\mathrm{Four}_{B}$ commutes with any base change $S'\to S$.

\item Suppose $S$ is a scheme of finite type over an algebraically closed field $F$ containing the finite field $k$.  
Restricting to monodromic objects, we have an isomorphism of functors  
$$\mathfrak {F}_{\psi} \cong \mathrm{Four}_{B}: D_{\mathrm{mon}}^b(\mathbb V,\overline{\mathbb{Q}}_{\ell})\to
D_{\mathrm{mon}}^b(\mathbb V^\vee,\overline{\mathbb{Q}}_{\ell}).$$ 
\end{enumerate}
\end{proposition}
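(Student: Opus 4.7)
\noindent For (i), the plan is to track each ingredient of $\mathrm{Four}_B$ under base change: the pairing $\langle\,,\,\rangle$ and the projection $p_2$ pull back to their analogues on $S'$, the functor $Rp_{2!}$ commutes with base change by proper base change, and pullbacks commute with derived tensor products. For the kernel $B = Ru_* \overline{\mathbb Q}_\ell$, I would use the distinguished triangle
$$
s_{1,*}s_1^!\overline{\mathbb Q}_\ell \to \overline{\mathbb Q}_\ell \to B \to
$$
attached to the complementary inclusions of the $1$-section $s_1 \colon S \hookrightarrow \mathbb A^1_S$ and of $u$. Since $s_1$ is the section of a smooth morphism of relative dimension one, purity gives $s_1^!\overline{\mathbb Q}_\ell \cong \overline{\mathbb Q}_\ell(-1)[-2]$; both outer terms of the triangle are then pulled back from constant sheaves on the base, and therefore commute with any base change. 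Hence so do $B$ and $\mathrm{Four}_B$.

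For (ii), the substantive point is that although $\mathcal L_\psi$ and $B$ are very different as global sheaves on $\mathbb A^1$---$\mathcal L_\psi$ is lisse of rank one with wild ramification at $\infty$, while $B$ is constant off the $1$-section and has tame monodromy there---the two Fourier transforms agree on monodromic objects. The plan is to exploit the $\mathbb G_m$-action on $\mathbb V^\vee \times_S \mathbb V$ by opposite weights $(-n, n)$, which preserves the pairing. For a monodromic $K$ with $\theta(n)^*K \cong \mathrm{pr}_2^*K$, the sheaf $p_1^*K$ acquires a compatible equivariant structure under this action, and one can average the integrand $p_1^*K \otimes \langle\,,\,\rangle^*L$ along $\mathbb G_m$-orbits. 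After averaging, $Rp_{2!}(p_1^*K \otimes \langle\,,\,\rangle^*L)$ depends on $L$ only through a local invariant concentrated at the fiber $\langle\,,\,\rangle = 1$, and one checks directly that $\mathcal L_\psi$ and $B$ yield the same such invariant.

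\textbf{The main obstacle} is to make the $\mathbb G_m$-averaging argument precise and to track Tate twists and shifts carefully enough that the comparison becomes a natural isomorphism of functors, not merely a numerical identity of traces. This is essentially the content of Wang's comparison theorem \cite{Wang}; once it is established over an algebraically closed $k$, no further enhancement is required for (ii).
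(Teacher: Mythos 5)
Your argument for (i) is exactly the paper's: the localization triangle $1_*\overline{\mathbb Q}_\ell(-1)[-2]\to\overline{\mathbb Q}_\ell\to Ru_*\overline{\mathbb Q}_\ell\to$ shows $B$ commutes with base change, and the remaining ingredients do so by proper base change and the projection formula. For (ii) the paper simply cites Wang's comparison theorem (\cite[Corollary 4.5]{Wang}), which is also where your sketch ultimately lands, so the proposal is correct and follows essentially the same route.
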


\begin{proof} (i) 
We have a distinguished triangle
$$\begin{array}{ccc}
\mathbbm 1_*\overline{\mathbb{Q}}_{\ell}(-1)[-2]\to \overline{\mathbb{Q}}_{\ell}\to &Ru_*\overline{\mathbb{Q}}_{\ell}&\to.\\
&\parallel&\\
&B&
\end{array}$$
So the formation of $B$ and hence $\mathrm{Four}_{B}$ commute with any base change $S'\to S$. 

(ii) This is \cite[Corollary 4.5]{Wang}.
\end{proof}

\subsection{Fourier transforms for $\mathcal D$-modules}\label{FourD}

Suppose $\overline K$ is an algebraically closed field of characteristic $0$, and 
$\mathbb V$ a vector space over $\overline K$ of dimension $m$. We have an isomorphism
$$\Gamma(\mathbb V^\vee, \mathcal D_{\mathbb V^\vee})\to\Gamma(\mathbb V, 
\mathcal D_{\mathbb V}),\quad x'_i\mapsto -\partial_{x_i}, \quad 
\partial_{x'_i}\mapsto x_i,$$
where $(x_1,\ldots, x_m)$ is a linear coordinate on $\mathbb V$, and $(x'_1,\ldots,x'_m)$ is the dual coordinate on $\mathbb V^\vee$. 
It transforms quasi-coherent right $ \mathcal D_{\mathbb V}$-modules to quasi-coherent 
right $\mathcal D_{\mathbb V^\vee}$-modules, which we call the 
\emph{Fourier transform}. It transforms coherent (resp. holonomic) right $\mathcal D_{\mathbb V}$-modules 
to coherent (resp. holonomic) right $\mathcal D_{\mathbb V^\vee}$-modules, and it can be extended to an 
exact functor
$$\mathfrak F: D^b_{qc}(\mathcal D_{\mathbb V}^{\mathrm{op}})\to  D^b_{qc}(\mathcal D_{\mathbb V^\vee}^{\mathrm{op}}).$$
On $\mathbb A_K^1=\mathrm{Spec}\,\overline K[x]$
we have right $\mathcal D$-modules
\begin{eqnarray*}
\mathcal L:=\mathcal D_{\mathbb A^1_{\overline K}}/(1-\partial_x)\mathcal D_{\mathbb A^1_{\overline K}},\quad
\mathcal B:=\mathcal D_{\mathbb A^1_{\overline K}}/\partial_x(x-1)\mathcal D_{\mathbb A^1_{\overline K}}.
\end{eqnarray*}
By \cite[Lemme 7.1.4]{KL}, the Fourier transform $\mathfrak F$ can be identified with
$$\mathfrak F_{\mathcal L}:  D^b_{qc}(\mathcal D_{\mathbb V}^{\mathrm{op}})\to  
D^b_{qc}(\mathcal D_{\mathbb V^\vee}^{\mathrm{op}}),\quad 
\mathfrak F_{\mathcal L}(\mathcal K)
=R\mathrm{pr}_+^\vee(R\mathrm{pr}^!\mathcal K\otimes^!R\langle\;,\;\rangle^!\mathcal L)[1-m].$$
The \emph{Wang-Fourier transform} is 
$$\mathrm{Four}_{\mathcal B}:  D^b_{qc}(\mathcal D_{\mathbb V}^{\mathrm{op}})\to  
D^b_{qc}(\mathcal D_{\mathbb V^\vee}^{\mathrm{op}}),\quad 
\mathrm{Four}_{\mathcal B}(\mathcal K)=R\mathrm{pr}_+^\vee(R\mathrm{pr}^!\mathcal K\otimes^!R\langle\;,\;
\rangle^!\mathcal B)[1-m].$$
An object $\mathcal K$ in $D_{c}^b(\mathcal D_{\mathbb V}^{\mathrm{op}})$ is called 
\emph{monodromic} if for any $i$, any global section of 
$\mathcal H^i(\mathcal K)$ is annihilated by some nonzero polynomial of the operator 
$\sum_{j=1}^m x_j\partial_{x_j}$.
Wang proves the following.

\begin{proposition}[\text{\cite[Corollary 5.8]{Wang}}]\label{isoFour}  Restricting to monodromic objects, we have an isomorphism of functors 
$$\mathfrak F_{\mathcal L}\cong\mathrm{Four}_{\mathcal B}: D^b_{\mathrm{mon}}(\mathcal D_{\mathbb V}^{\mathrm{op}})\to 
D^b_{\mathrm{mon}}(\mathcal D_{\mathbb V^\vee}^{\mathrm{op}}).$$ 
\end{proposition}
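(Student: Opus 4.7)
The plan is to model the proof on Wang's argument for the $\ell$-adic analog, Proposition \ref{WangDeligne}(ii), transposed to the $\mathcal D$-module setting.

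First, I would verify that both functors commute with smooth base change $S'\to S$. For $\mathfrak F_{\mathcal L}$ this is a standard compatibility of $R\mathrm{pr}_+^\vee$, $R\mathrm{pr}^!$, and $\otimes^!$ with base change. For $\mathrm{Four}_{\mathcal B}$ the additional ingredient is the compatibility of the kernel $\mathcal B$ itself with base change, which would follow from a distinguished triangle
\[
1_+ \mathcal O_S \to \mathcal O_{\mathbb A^1_S} \to \mathcal B \to
\]
analogous to the one used in the proof of Proposition \ref{WangDeligne}(i), together with the base-change compatibility of each of its terms. This lets me reduce to the case $S=\mathrm{Spec}\,K$, so that $\mathbb V=\mathbb A^r_K$.

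Second, I would construct a natural transformation between $\mathfrak F_{\mathcal L}$ and $\mathrm{Four}_{\mathcal B}$ and then verify that it is an isomorphism on every monodromic object. The cleanest way is via the Mellin transform associated with the scaling action of $\mathbb G_m$ on $\mathbb V$: on monodromic objects, the Mellin transform diagonalizes the Euler operator $\theta=\sum_j x_j\partial_{x_j}$, and both Fourier transforms factor naturally through it. Decomposing a monodromic object into generalized eigenspaces for $\theta$, it suffices to check the comparison one eigenspace at a time, where both Fourier transforms can be made explicit. Using the $\mathbb G_{m}$-equivariance of the pairing $\langle\,,\,\rangle:\mathbb V^\vee\times_K\mathbb V\to\mathbb A^1_K$, the kernels $\mathcal L$ and $\mathcal B$ enter only through their behavior along the $\mathbb G_m$-orbits in $\mathbb A^1_K$.

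The main obstacle lies in this final comparison: the kernels $\mathcal L$ and $\mathcal B$ are genuinely different $\mathcal D$-modules on $\mathbb A^1_K$, since $\mathcal L$ is irregular at infinity (reflecting the exponential $e^x$) while $\mathcal B$ is regular holonomic (reflecting $j_* \mathcal O_{\mathbb A^1_K-\{1\}}$). The nontrivial content of the proposition is that this difference is invisible to monodromic objects, because the Fourier integral paired with a monodromic sheaf only samples the kernel through its behavior under the scaling $\mathbb G_m$-action. Showing rigorously that the contributions of $\mathcal L$ and $\mathcal B$ agree on each generalized eigenspace of $\theta$ will require either an explicit computation of their Mellin transforms, or a more structural argument along the lines of Wang's framework, exploiting monodromicity at the level of the six-functor formalism on the $\mathbb G_m$-quotient.
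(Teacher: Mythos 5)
This proposition is not proved in the paper at all: it is quoted verbatim from Wang, as the bracketed citation \cite[Corollary 5.8]{Wang} in the statement and the preceding sentence ``Wang proves the following'' indicate. So there is no in-paper argument to compare your proposal against; the only question is whether your sketch would constitute an independent proof.

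As it stands it does not, because you have deferred exactly the step that carries all the content. The reduction to $S=\operatorname{Spec}K$ via base change and the idea of diagonalizing the Euler operator $\theta=\sum_j x_j\partial_{x_j}$ on monodromic objects (a Mellin-type decomposition) are reasonable preliminaries, and the observation that the $\mathbb G_m$-equivariance of the pairing forces the kernels to enter only through their behavior under scaling is the right heuristic. But the assertion that $\mathcal L=\mathcal D_{\mathbb A^1}/(1-\partial_x)\mathcal D_{\mathbb A^1}$ (irregular at infinity) and $\mathcal B=\mathcal D_{\mathbb A^1}/\partial_x(x-1)\mathcal D_{\mathbb A^1}$ (regular holonomic, $\cong Ru_!Ru^!\omega_{\mathbb A^1}$ by Remark \ref{mathcalB}) induce the same functor on each generalized $\theta$-eigenpiece is precisely the theorem, and you explicitly leave it open (``will require either an explicit computation \dots or a more structural argument''). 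Note also that the naive eigenspace-by-eigenspace comparison is delicate: the one-dimensional Mellin components of the two kernels are genuinely different $\mathcal D$-modules on $\mathbb A^1$, and what one must show is an isomorphism of the resulting convolution functors, natural in the monodromic input and compatible with the extensions glueing the eigenpieces together --- not a pointwise equality of ``Mellin transforms.'' Until that comparison is carried out (for instance along the lines of Wang's actual argument, which constructs an explicit morphism of kernels whose cone convolves to zero against monodromic objects), the proposal is an outline rather than a proof.
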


Assume $\overline K=\mathbb C$ is the complex number field. 
For any smooth algebraic variety $X$ over $\mathbb C$, 
let $X^{\mathrm{an}}$ be the complex analytic variety associated to $X$, 
let $\mathcal O_{X^{\mathrm{an}}}$ be the sheaf of holomorphic functions,
and let $\mathcal D_{X^{\mathrm{an}}}$ be
the sheaf of holomorphic differential operators. For any $\mathcal K\in\mathrm{ob}\,D_c^b(\mathcal D_X^{\mathrm{op}})$, 
let $\mathcal K^{\mathrm{an}}\in \mathrm{ob}\, D_c^b(\mathcal 
D_{X^{\mathrm{an}}}^{\mathrm{op}})$ be its analytification. The \emph{solution complex} of $\mathcal K$ 
is defined to be  
$$\mathrm{Sol}_X(\mathcal K):=
R\mathcal Hom_{\mathcal D_{X^{\mathrm{an}}}}(\mathcal K^{\mathrm{an}}, \omega_{X^{\mathrm{an}}}),$$ 
where $\omega_{X^{\mathrm{an}}}$ is the sheaf of holomorphic top forms on $X^{\mathrm{an}}$ viewed as a right 
$\mathcal D_{X^{\mathrm{an}}}$-module. 
The \emph{de Rham complex} of an object $\mathcal M\in\mathrm{ob}\, D^b_c(\mathcal D_X)$ is defined to be
$$DR_X(\mathcal M):= \omega_{X^{\mathrm{an}}} \otimes^L_{\mathcal D_{X^{\mathrm{an}}}} \mathcal M^{\mathrm{an}}.$$
By \cite[Proposition 4.7.4]{Hottabook}, we have
$$\mathrm{Sol}_X(\mathcal K)\cong DR_X(\mathbb D_X\mathcal K)[-\mathrm{dim}\,X],$$
where $$\mathbb D_X: D^b_c(\mathcal D^{\mathrm{op}}_X)\to D^b_c(\mathcal D_X),\quad 
\mathbb D_X(\mathcal K)=R\mathcal Hom_{\mathcal D_X}(\mathcal K,\mathcal D_X)[\mathrm{dim}\, X]$$ 
is the duality functor. 
For any complex vector space $\mathbb V$ of rank $m$, we also have the \emph{Wang-Fourier transform} 
$$\mathrm{Four}_B: D_c^b(\mathbb V, \mathbb C)\to D_c^b(\mathbb V^\vee, \mathbb C), \quad 
\mathrm{Four}_B(K)= Rp_{2!}^\vee(p_1^*K\otimes\langle\;,\;\rangle^*B)[m],$$
where $B=Ru_* \mathbb C$, $D_c^b(\mathbb V, \mathbb C)$ 
is the derived category of bounded complexes of sheaves of complex vector spaces with constructible cohomology
with respect to the usual topology on complex analytic varieties. 

\begin{proposition}\label{solFour} 
For any $\mathcal K\in \mathrm{ob}\,  D^b_{{rh}}(\mathcal D^{\mathrm{op}}_{\mathbb V})$, we have an isomorphism
$$\mathrm{Four}_{B}(\operatorname{Sol}(\mathcal K))\cong \operatorname{Sol}(\mathrm{Four}_{\mathcal{B}}(\mathcal K)).$$
\end{proposition}

\begin{proof}  We have $\mathcal B=Ru_! Ru^!\omega_{\mathbb A^1_{\mathbb C}}.$ (Confer
Remark \ref{mathcalB} in the next section.)
By \cite[Theorem 7.1.1]{Hottabook},
we have 
\begin{align*}
\mathrm{Sol}(\mathcal B)\cong\; &DR(\mathbb D(Ru_! Ru^!\omega_{\mathbb A^1_{\mathbb C}}))[-1]
\cong DR(Ru_+ Ru^+ \mathbb D(\omega_{\mathbb A^1_{\mathbb C}}))[-1]\\
\cong \;& Ru_* u^* DR(\mathbb D(\omega_{\mathbb A^1_{\mathbb C}}))[-1]\cong Ru_* u^* \mathrm{Sol}(\omega_{\mathbb A^1_{\mathbb C}})
\cong  Ru_* u^*\mathbb C=B,
\end{align*}
Let $\Delta$ be the diagonal morphism for $\mathbb V\times \mathbb V$. Again by \cite[Theorem 7.1.1]{Hottabook}, we have 
\begin{align*}
\operatorname{Sol}(\mathrm{Four}_{\mathcal{B}}(\mathcal K))\cong\;& 
\operatorname{Sol}\Big(R\mathrm{pr}^\vee_+ R\Delta^!
\Big(R\mathrm{pr}^! \mathcal K\boxtimes^L R\langle\;,\;\rangle^! \mathcal B\Big)[1-m]\Big)\\
\cong\;&DR \mathbb D \Big(R\mathrm{pr}^\vee_+R \Delta^!\Big(R\mathrm{pr}^! \mathcal K\boxtimes^L R\langle\;,\;\rangle^! \mathcal B\Big)\Big)
[-1]\\
\cong\;&DR  \Big(R\mathrm{pr}^\vee_! R\Delta^+ \Big(R\mathrm{pr}^+ \mathbb D(\mathcal K)\boxtimes^LR \langle\;,\;\rangle^+ 
\mathbb D(\mathcal B)\Big)\Big)[-1]\\
\cong\;& R\mathrm{pr}^\vee_! \Delta^* \Big(\mathrm{pr}^*DR(\mathbb D(\mathcal K))\boxtimes^L \langle\;,\;\rangle^* 
DR(\mathbb D(\mathcal B))\Big)[-1]\\
\cong\;& R\mathrm{pr}^\vee_! (\mathrm{pr}^*\mathrm{Sol}(\mathcal K)\otimes^L \langle\;,\;\rangle^* \mathrm{Sol}(\mathcal B))[n]
=\mathrm{Four}_B(\mathrm{Sol}(\mathcal K)).\qedhere
\end{align*}
\end{proof}

\begin{remark} Note that the argument in the proof of Proposition \ref{solFour} does not apply to the usual Fourier transform 
$\mathfrak F_{\mathcal L}$ since $\mathcal L$ is irregular and the Riemann-Hilbert correspondence is not applicable. 
But combining Propositions \ref{isoFour}
and \ref {solFour} together, we get 
$$\mathrm{Four}_{B}(\operatorname{Sol}(\mathcal K))\cong \operatorname{Sol}(\mathfrak F_{\mathcal{L}}(\mathcal K))$$
for any $\mathcal K\in\mathrm{ob}\,D^b_{\mathrm{mon}}(\mathcal D^{\mathrm{op}}_{\mathbb V})\cap \mathrm{ob}\,
D^b_{{rh}}(\mathcal D^{\mathrm{op}}_{\mathbb V})$. 
\end{remark}

Propositions \ref{perverse} and \ref{holonomic} follows from the following. 

\begin{proposition}\label{prop:perverseholonomic}
Let $G$ be a reductive group over a field $F$, 
$\rho_j: G\to\mathrm{GL}(V_j)$ $(j=1, \ldots, N)$ a family of representations of $G$, 
$\mathbb V=\prod_{j=1}^N\mathrm{End}(V_j)$, $\iota$ the morphism
$$\iota:G\to \prod_{j=1}^N\mathrm{End}(V_j), \quad g\mapsto (\rho_1(g),\ldots, \rho_N(g)),$$ 
$m=\mathrm{dim}\,V$ and $d=\mathrm{dim}\,G$. 

\begin{enumerate}[(i)]
\item Suppose $F$ is a field of containing the finite field $k$. Let $\psi: k\to \overline{\mathbb Q}_\ell$ be 
a nontrivial additive character. We have
$$\mathrm{Hyp}_{\psi,!}\cong \mathfrak F_{\psi}(\iota_!
\overline{\mathbb Q}_\ell[d]),\quad \mathrm{Hyp}_{\psi,*}\cong \mathfrak F_{\psi}(R\iota_*
\overline{\mathbb Q}_\ell[d])$$ If $\iota$ is a quasi-finite morphism, then $\mathrm{Hyp}_{\psi,!}$ and 
$\mathrm{Hyp}_{\psi,*}$ are perverse sheaves. Suppose furthermore that $F$ is a finite field. Then
$\mathrm{Hyp}_{\psi,!}$ is mixed of weights $\leq d+m$.  
\item Suppose $F$ is an algebraically closed field of characteristic $0$. We have
$$\mathcal Hyp_+\cong \mathfrak F_{\mathcal L}(R\iota_+ \omega_G),
\quad \mathcal Hyp_!\cong \mathfrak F_{\mathcal L}(R\iota_!
\omega_G)$$ If $\iota$ is a quasi-finite morphism, then $\mathcal Hyp_+$ and 
$\mathcal Hyp_!$ are holonomic right $\mathcal D_{\mathbb V}$-modules.
\end{enumerate}
\end{proposition}

\begin{proof} We can identify $\mathbb V$ with its dual
$\mathbb V^\vee$ via the pairing 
$$\langle\;,\;\rangle: \prod_{j=1}^N\mathrm{End}(V_j)\times \prod_{j=1}^N\mathrm{End}(V_j) \to \mathbb A^1, 
\quad ((A_1, \ldots, A_N), (A'_1, \ldots, A'_N))\mapsto \sum_{j=1}^N \mathrm{Tr}(A_jA'_j).$$
Fix notation by the following commutative diagram, where all squares
are Cartesian:
$$\begin{tikzcd}
G\times_k\mathbb V\arrow[r, "\iota\times \mathrm
{id}"]\arrow[d,"\pi_1"] &\mathbb V\times_k\mathbb  V\arrow[r, "p_2"]\arrow[d, "p_1"]& \mathbb V\arrow[d]\\
G\arrow[r, "\iota"]&\mathbb V\arrow[r]&\mathrm{Spec}\,k.
\end{tikzcd}$$

(i) By the proper base change theorem and the projection formula, we
have
\begin{eqnarray*}
\mathfrak F_\psi(\iota_! \overline{\mathbb Q}_\ell[d]) &\cong& Rp_{2!} \big(p_1^\ast
\iota_!\overline{\mathbb Q}_\ell\otimes
\langle\;,\;\rangle^\ast \mathcal L_\psi\big)[d+m]\\
&\cong&  Rp_{2!}\big((\iota\times\mathrm{id})_! \overline{\mathbb Q}_\ell \otimes
\langle\;,\;\rangle^\ast \mathcal L_\psi\big)[d+m]\\
&\cong& Rp_{2!}(\iota\times\mathrm{id})_! (\iota\times\mathrm{id})^\ast\langle\;,\;\rangle^\ast
\mathcal L_\psi[d+m].
\end{eqnarray*}
We have $p_2(\iota\times\mathrm{id})=\pi_2$ and
$\langle\;,\;\rangle\circ (\iota\times\mathrm{id})=F$, where
$\pi_2:G\times_k \mathbb V\to \mathbb V$ is the
projection and $F$ is the morphism $$G\times \prod_{j=1}^N\mathrm{End}(V_j)\to \mathbb A^1,
\quad F(g,(A_j))=\sum_{j=1}^N \mathrm{Tr}(A_j\rho_j(g)).$$ So we have
$$\mathfrak F_\psi(\iota_!\overline{\mathbb Q}_\ell[d])\cong 
R\pi_{2!}F^\ast\mathcal
L_\psi[d+m]=\mathrm{Hyp}_{\psi,!}.$$
Note that $\iota$ is an affine morphism. 
If $\iota$ is quasi-finite, then $\iota_!\overline{\mathbb Q}_\ell[d]$ is a perverse sheaf by 
\cite[4.1.3]{BBD}. By \cite[1.3.2.3]{L}, $\mathrm{Hyp}_{\psi,!}\cong \mathfrak F_\psi(\iota_!\overline{\mathbb Q}_\ell[d])$
is perverse. By \cite[3.3.1]{D}, $\mathrm{Hyp}_{\psi, !}\cong R\pi_{2!}F^\ast \mathcal
L_\psi[d+m]$ is mixed of weights $d+m$. The assertions for $\mathrm{Hyp}_{\psi,*}$ follows by duality.

(ii) By the base change theorem \cite[Theorem 1.7.3]{Hottabook}, the projection formula \cite[Corollary 1.7.5]{Hottabook} and the fact that 
$\mathcal L$ is an integrable connections, we have 
\begin{eqnarray*}
\mathfrak F_{\mathcal L}(R\iota_+ \omega_G)
&\cong& Rp_{2+} \Big(Rp_1^! R\iota_+ \omega_G\otimes
_{\mathcal O_{\mathbb V\times_k\mathbb V}}^! R\langle\,,\,\rangle^! \mathcal L\Big)[1-m]\\
&\cong& Rp_{2+} \Big(R(\iota\times\mathrm{id})_+ R\pi_1^! \omega_G
\otimes_{\mathcal O_{\mathbb V\times_k \mathbb V}}^! R\langle\,,\,\rangle^! \mathcal L\Big)[1-m]\\
&\cong& R(p_{2} \circ (\iota\times \mathrm{id}))_+ \Big(R\pi_1^!\omega_G\otimes^!
_{\mathcal O_{G\times_k\mathbb V}} 
R(\langle\,,\,\rangle\circ (\iota\times \mathrm{id}))^! \mathcal L\Big)[1-m]\\
&\cong& R\pi_{2+} (R\pi_1^! \omega_G\otimes^!_{\mathcal O_{G\times_k\mathbb V}} RF^! \mathcal L)[1-m]
\cong R\pi_{2+} F^* \mathcal L
\cong\mathcal Hyp_{+}.
\end{eqnarray*}
Suppose $\iota$ is quasi-finite. Let $\tilde G$ be the image of $\iota$, which is also an algebraic group. 
We can factorize $\iota$ as the composite 
$$G\to \tilde G\to \prod_{j=1}^N \mathrm{GL}(V_j)\to \prod_{j=1}^N \mathrm{End}(V_j),$$
where each morphism is either finite, or an affine open immersion. For an affine open immersion $j: U\to X$, we have
$Rj_+\mathcal N=j_*\mathcal N$. If $\mathcal N$ is a holonomic $\mathcal D_U$-module, then 
$Rj_+\mathcal N$ lies in the derived category $D^b_h(\mathcal D_X)$ by \cite[Theorem 3.2.3]{Hottabook}. 
So $Rj_+\mathcal N$ is a holonomic $\mathcal D_X$-module.
For a finite morphism $f:X\to Y$, we have 
$$Rf_{+}(-)=f_*(-\otimes^L_{\mathcal D_{\overline X}}\mathcal D_{X\to Y}).$$
It follows that $Rf_+$ maps $D^{\leq 0}_h(\mathcal D_X)$ to $D^{\leq 0}_h(\mathcal D_Y)$. 
On the other hand, we have $$Rf_+\cong Rf_!\cong \mathbb D Rf_+\mathbb D,$$ 
where $\mathbb D$ is the duality functor.
So $Rf_+$ maps $D^{\geq 0}_h(\mathcal D_X)$ to $D^{\geq 0}_h(\mathcal D_Y)$. 
Hence for any holonomic $\mathcal D_X$-module $\mathcal M$, 
$Rf_+\mathcal M$ is a holonomic $\mathcal D_Y$-module. Thus 
$R\iota_+\omega_G$ is a holonomic $\mathcal D_{\mathbb V}$-module. Therefore its Fourier transform $\mathcal Hyp_+$ 
is a holonomic $\mathcal D_{\mathbb V}$-module. The assertion for $\mathcal Hyp_!$ follows by duality. 
\end{proof}

Let $R$ be a Dedekind domain so that its fraction field $K$ 
is of characteristic $0$ and its reside fields at maximal ideals are finite of characteristic distinct from $\ell$, 
$G_R$ a split reductive group scheme over $R$, 
$\rho_{j,R}: G_R\to\mathrm{GL}(V_{j,R})$ $(j=1, \ldots, N)$ representations such that the representations 
$\rho_{j, \overline K}: G_{\overline K}\to \mathrm{GL}(V_{j,\overline K})$ are irreducible, and the morphism 
$$\iota_R: G_R\to \mathbb V_R=\prod_{j=1}^N \mathrm{End}_R(V_{j, R}), \quad g\mapsto (\rho_{1,R}(g),\ldots, \rho_{N,R}(g))$$
is quasi-finite.

\begin{definition} \label{homocond} 
We say the \emph{the homogeneity condition} holds over $R$ if $\mathbb G_{m, R}$ is contained in the center of $G_R$ and
for each $j$, the composite $$\mathbb G_{m, R}\hookrightarrow G_R\stackrel{\rho_{j, R}}\to \mathrm{GL}(V_{j, R})$$ coincides with 
the homomorphism
$$\mathbb G_{m, R}\to \mathrm{GL}(V_{j, R}),\quad 
\lambda \mapsto \lambda\,\mathrm{id}_{V_{j,R}}.$$
Similarly we can define the homogeneity condition over $K$. 
\end{definition}

\begin{definition}\label{definitionhypD} Suppose the homogeneity condition holds over $R$. 

\begin{enumerate}[(i)]
\item The $\ell$-adic 
\emph{hypergeometric sheaves} $\mathrm{Hyp}_{!,R}, \mathrm{Hyp}_{*,R}\in\mathrm{ob}\,D_c^b(\mathbb V_R, \overline{\mathbb Q}_\ell)$ 
over $R$ are defined to be $$\mathrm{Hyp}_{!, R}=\mathrm {Four}_B(R\iota_{R, !}\overline{\mathbb Q}_\ell[\mathrm{dim}\,G]),
\quad \mathrm{Hyp}_{*, R}=\mathrm{Four}_B(R\iota_{R, *}\overline{\mathbb Q}_\ell[\mathrm{dim}\,G]),$$
where $\mathrm{Four}_B$ is 
the Wang-Fourier transform for the vector bundle $\mathbb V_R$ over $\mathrm{Spec}\,R$. 

\item Suppose $\mathrm{Spec}\,\mathbb C\to \mathrm{Spec}\,R$ is a $\mathbb C$-point over the generic point of $\mathrm{Spec}\,R$. 
The \emph{hypergeometric $\mathcal D$-modules $\mathcal Hyp_+,  \mathcal Hyp_!\in 
\mathrm{ob}\,D_{qc}^b(\mathcal D_{V_{\mathbb C}})$} are defined to be 
$$\mathcal Hyp_+=\mathrm{Four}_{\mathcal B}(R\iota_{\mathbb C, +}\omega_{G_{\mathbb C}}),
\quad \mathcal Hyp_!=\mathrm{Four}_{\mathcal B}(R\iota_{\mathbb C, !}\omega_{G_{\mathbb C}}),$$ 
where $\mathrm{Four}_{\mathcal B}$ is the Wang-Fourier transform for right $\mathcal D$-modules on the vector bundle $\mathbb V_{\mathbb C}$ over 
$\mathrm{Spec}\,\mathbb C$, and $\iota_{\mathbb C}$ is the base change of $\iota_R$.
By Proposition \ref{basichyp_D} below, $R\iota_{\mathbb C, !}\omega_{G_{\mathbb C}}$ and 
$R\iota_{\mathbb C, *}\omega_{G_{\mathbb C}}$ are monodromic. So we have
$$\mathcal Hyp_+\cong\mathfrak F_{\mathcal L}(R\iota_{\mathbb C, +}\omega_{G_{\mathbb C}}),
\quad \mathcal Hyp_!\cong\mathfrak F_{\mathcal L}(R\iota_{\mathbb C, !}\omega_{G_{\mathbb C}}).$$ 
By Proposition \ref{prop:perverseholonomic} (ii), this definition of hypergeometric $\mathcal D$-modules coincides
with the one in Definition \ref{defn:hypergeometricD}. 
We define the \emph{complex hypergeometric sheaves} $\mathrm{Hyp}_{!, \mathbb C}$ and $\mathrm{Hyp}_{*, \mathbb C}$ to be
\begin{eqnarray*}
\mathrm{Hyp}_{!, \mathbb C}:=\mathrm {Four}_B(R\iota_{\mathbb C, !}\mathbb C[\mathrm{dim}\,G]),\quad 
\mathrm{Hyp}_{*, \mathbb C}:=\mathrm{Four}_B(R\iota_{\mathbb C, *}\mathbb C[\mathrm{dim}\,G]).
\end{eqnarray*}
\end{enumerate}
\end{definition} 

\subsection{Equivariant objects}\label{equiv} Let $H$ be a group $S$-scheme, $\mu: H\times_S H\to H$ the multiplication 
on $H$, $X$ an $S$-scheme provided with an $H$-action $A:H\times_S X\to X$, and 
$$p_2:H\times_S X\to X, \quad p_{23}: H\times_SH\times_SX\to H\times_SX$$ the projections. An 
\emph{$H$-equivariant object} in $D_c^b(X,\overline{\mathbb Q}_\ell)$ is a pair $(K, \phi)$ consisting of an object 
$K$ in $D_c^b(X,\overline{\mathbb Q}_\ell)$ and an isomorphism $$\phi:A^*K\stackrel\cong \to p_2^*K$$ such that 
$$(\mu\times\mathrm{id}_X)^*(\phi)=p_{23}^*(\phi)\circ (\mathrm{id}_H\times A)^*(\phi),$$ 
that is, the following diagram commutes:
$$ \begin{array}{ccccccc}
(\mathrm{id}_H\times A)^*A^*K&&&\cong&&&    (\mu\times\mathrm{id}_X)^*A^*K \\
{\scriptstyle (\mathrm{id}_H\times A)^*(\phi)}\downarrow\qquad&&&&&&\qquad\downarrow {\scriptstyle  (\mu\times\mathrm{id}_X)^*(\phi)}\\
(\mathrm{id}_H\times A)^*p_2^*K&\cong&p_{23}^*A^*K&\stackrel{p_{23}^*(\phi)}\to
&p_{23}^*p_2^*K&\cong &    (\mu\times\mathrm{id}_X)^*p_2^*K.
\end{array}
$$
We call the last condition the \emph{cocycle condition}. Similarly, we can define $H$-equivariant objects in
$D_c^b(\mathcal D_X^{\mathrm{op}})$. One can show equivariant objects are monodromic. 
In the case where $S=\mathrm{Spec}\,\overline K$ for an algebraically closed field $\overline K$ of characteristic $0$, an equivariant 
object $\mathcal K$ in $D_c^b(\mathcal D_X^{\mathrm{op}})$ lies in $D_{{rh}}^b(\mathcal D_X^{\mathrm{op}})$ if
$\mathrm{supp}\,\mathcal H^i(K)$ is a union of finitely many orbits for each $i$ by \cite[II 5 Theorem]{Hotta}.

\begin{proposition}\label{basichyp_D} Suppose the homogeneity condition holds over $R$.
Let $\mathrm{Spec}\,\mathbb C\to \mathrm{Spec}\,R$ be a $\mathbb C$-point over the generic point of $\mathrm{Spec}\,R$. 

\begin{enumerate}[(i)]
\item $\iota_{R, !}\overline{\mathbb Q}_\ell$ and $R\iota_{R, *}\overline{\mathbb Q}_\ell$ are equivariant objects in
$D_c^b(\mathbb V,\overline{\mathbb Q}_\ell)$, and $R\iota_{\mathbb C, !}\omega_{G_{\mathbb C}}$ and 
$R\iota_{\mathbb C, *}\omega_{G_{\mathbb C}}$ are equivariant objects in 
$D^b_{{rh}}(\mathcal D_{\mathbb V_{\mathbb C}}^{\mathrm{op}})$.
They are all monodromic.

\item Let $k$ be the residue field of $R$ at a maximal ideal $\mathfrak m$. We have 
$$\mathrm{Hyp}_{!, R}|_{V_{\bar k}}\cong \mathrm{Hyp}_{\psi, !}|_{V_{\bar k}}.$$ 

\item We have 
$$\mathrm{Sol}(\mathcal Hyp_+)\cong \mathrm{Hyp}_{!,\mathbb C},
\quad \mathrm{Sol}(\mathcal Hyp_!)=\mathrm{Hyp}_{*,\mathbb C}.$$ 
\end{enumerate}
\end{proposition}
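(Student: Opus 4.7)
For part (i), my plan is to exhibit the equivariant structures explicitly. The morphism $\iota$ is equivariant for the action of $G \times G$ on $G$ via $(g, h)\cdot x = g x h^{-1}$ and on $\mathbb V$ via $(g, h)\cdot (A_j) = (\rho_j(g) A_j \rho_j(h)^{-1})$, since $\iota(gxh^{-1})=(g,h)\cdot\iota(x)$. Proper/smooth base change then transfers the trivial equivariant structure on $\overline{\mathbb Q}_\ell$ (resp.\ on $\omega_{G_{\mathbb C}}$) through $R\iota_!$ and $R\iota_*$ (resp.\ through $R\iota_!$ and $R\iota_+$), and the cocycle condition is inherited from the trivial one upstairs. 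Restricting the action to the central $\mathbb G_m \hookrightarrow G$, which acts as scalars on $\mathbb V$ by the homogeneity condition, gives monodromicity in the sense of the definition preceding Proposition \ref{WangDeligne}. For regular holonomicity, I would observe that $\iota(G)$ equals the $(G\times G)$-orbit of $(\mathrm{id}_{V_1},\ldots,\mathrm{id}_{V_N})$, since every $x \in G$ has the form $gh^{-1}$; hence $R\iota_{\mathbb C, !}\omega_{G_{\mathbb C}}$ and $R\iota_{\mathbb C, +}\omega_{G_{\mathbb C}}$ are $(G\times G)$-equivariant with support contained in the closure of a single orbit, so the criterion recalled at the end of Section \ref{equiv} applies.

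For part (ii), I invoke the base change property of Proposition \ref{WangDeligne}(i), which yields
$$\mathrm{Hyp}_{!, R}|_{V_{\bar k}} \cong \mathrm{Four}_B\bigl(R\iota_{\bar k, !}\overline{\mathbb Q}_\ell[\mathrm{dim}\, G]\bigr).$$
By part (i) the argument is monodromic, so Proposition \ref{WangDeligne}(ii) identifies $\mathrm{Four}_B$ with the Deligne-Fourier transform $\mathfrak F_\psi$ on it, and the identification $\mathrm{Hyp}_{\psi, !} \cong \mathfrak F_\psi(\iota_!\overline{\mathbb Q}_\ell[\mathrm{dim}\,G])$ established at the start of Section 1.1 completes the argument.

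For part (iii), my plan is to combine Proposition \ref{solFour} with standard Riemann-Hilbert compatibilities between $\mathrm{Sol}$ and $\mathcal D$-module direct images. Since part (i) provides the regular holonomicity needed in Proposition \ref{solFour},
$$\mathrm{Sol}(\mathcal Hyp_{!,\mathbb C}) = \mathrm{Sol}\bigl(\mathrm{Four}_{\mathcal B}(R\iota_{\mathbb C, !}\omega_{G_{\mathbb C}})\bigr) \cong \mathrm{Four}_B\bigl(\mathrm{Sol}(R\iota_{\mathbb C, !}\omega_{G_{\mathbb C}})\bigr).$$
To identify $\mathrm{Sol}(R\iota_{\mathbb C, !}\omega_{G_{\mathbb C}})$ with $R\iota_{\mathbb C, *}\mathbb C[\mathrm{dim}\, G]$, I would use the formula $\mathrm{Sol}(\mathcal K) = DR(\mathbb D \mathcal K)[-\mathrm{dim}\,\mathbb V]$ given in the paper, the duality swap $\mathbb D \circ R\iota_! \cong R\iota_+ \circ \mathbb D$ (with $\mathbb D(\omega_{G_{\mathbb C}})\cong\mathcal O_{G_{\mathbb C}}$), the compatibility $DR\circ R\iota_+ \cong R\iota_*\circ DR$, and the equality $DR(\mathcal O_{G_{\mathbb C}})\cong \mathbb C[\mathrm{dim}\, G]$. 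The assertion $\mathrm{Sol}(\mathcal Hyp_{*,\mathbb C}) \cong \mathrm{Hyp}_{!,\mathbb C}$ is entirely analogous, exchanging the roles of $!$ and $+$ on the $\mathcal D$-module side and of $!$ and $*$ on the sheaf side.

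The principal technical obstacle will be ensuring that the various cohomological shifts in (iii) line up precisely: the paper's conventions for $\mathrm{Sol}$, $DR$, and direct images of right $\mathcal D$-modules interact non-trivially with the side-switching $\omega_G \leftrightarrow \mathcal O_G$ and with the duality $\mathbb D$, and all the pieces must be harmonized so that the final shift matches $[\mathrm{dim}\, G]$. Parts (i) and (ii) should by contrast require only routine verification once the equivariant structures and the base change/monodromic identifications are in place.
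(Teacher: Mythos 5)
Your proposal follows the paper's proof essentially verbatim: (i) via equivariance of $\iota$ for the $G\times G$-action (the paper works directly with the central $\mathbb G_m$ acting by scalars, which is what your restriction step produces) together with the finitely-many-orbits criterion for regular holonomicity, (ii) via the base-change and monodromic-comparison parts of Proposition \ref{WangDeligne} plus the Deligne--Fourier expression for $\mathrm{Hyp}_{\psi,!}$, and (iii) via Proposition \ref{solFour} combined with the Riemann--Hilbert compatibilities of $\mathrm{Sol}$ with duality and direct images. The only point to watch is the one you already flag: with the stated normalizations $\mathrm{Sol}(R\iota_{\mathbb C,!}\omega_{G_{\mathbb C}})$ comes out as $R\iota_{\mathbb C,*}\mathbb C[\dim G-\dim\mathbb V]$ rather than $R\iota_{\mathbb C,*}\mathbb C[\dim G]$, a shift-bookkeeping issue that the paper's own one-line proof of (iii) likewise elides.
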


\begin{proof} By the homogeneity condition, $\mathbb G_{m,R}$ can be viewed as a subgroup scheme contained in 
the center of $G_R$. Let $\mathbb G_{m,R}$ act on $G_R$ by left multiplication, and on 
$\mathbb V_R$ by scalar multiplication. The homogeneity condition implies that the morphism $\iota_R: G_R\to \mathbb V_R$ is 
$\mathbb G_{m,R}$-equivariant. 
The sheaf $\overline{\mathbb Q}_\ell$ is a $\mathbb G_{m,R}$-equivariant sheaf on $G_R$. So 
$\iota_{R, !}\overline{\mathbb Q}_\ell$ is a $\mathbb G_{m,R}$-equivariant sheaf on $\mathbb V_R$ and hence monodromic. The other 
assertions in (i) can be proved by the same argument. (ii) then follows from Proposition \ref{WangDeligne}, and (iii) follows from 
Proposition \ref{solFour}.
\end{proof}

\section{Calculation on $D$-modules}

In this section we work over an algebraically closed field $\overline K$ of characteristic zero.
Let $H$ be a reductive algebraic group over $\overline K$, let $B$ be a Borel subgroup of $H$,
and let $Y$ be an \emph{$H$-spherical variety}, that is, $Y$ is a normal variety with a left $H$-action
containing an open $B$-orbit $By_0$ for some $\overline K$-point $y_0$ in $Y$. 
For surveys of spherical varieties, see \cite{Brion, Knop, Timashev}. 
Let $V$ be a smooth $\overline K$-variety provided with an $H$-action, let $f: Y\to V$ be a finite equivariant morphism, and let 
$\iota: Hy_0\to V$ be the composite $$Hy_0\hookrightarrow Y\stackrel{f}\to V.$$ 
The main result of this section Propositions \ref{sphericalDmodisogen} says that if $Hy_0$ is affine, then 
$R\iota_!\omega_{Hy_0}$ is a direct summand of an explicit $\mathcal D$-module. 
This technical result will be used in the next section. 

\subsection{}
First consider the case where $Y$ is a smooth toroidal $H$-spherical variety and $V=Y$. Let 
$D_1, \dots, D_s$ be all the $B$-stable but not $H$-stable prime divisors of $Y$, $\delta=\bigcup_{i=1}^{s}D_{i}$, and
$P=\{g\in H:g\delta=\delta\}$. By \cite[Proposition 1 in 2.4]{Brion} or \cite[Theorem 29.1]{Timashev},  
there exists a Levi subgroup $L$ of $P$ and 
a closed $L$-stable subvariety $S$ 
of $Y-\delta$ such that we have a $P$-equivariant isomorphism
\[R_{u}(P)\times S\rightarrow Y-\delta,\quad(g,y)\mapsto gy,\]
where $R_u(P)$ is the unipotent radical of $P$. 
Moreover, $[L,L]$ acts trivially on $S$ and $S$ is a toric variety for a quotient torus of $L^{\mathrm{ab}}:=L/[L,L]$ with 
the same fan as $Y$. 
For each $\overline K$-point $y$ in $S$, we have $$Hy\cap(Y-\delta)=R_{u}(P)Ly.$$ 
We choose $y_0$ lying in $S$. Then $Ly_0$ is open in $S$. 

Let $\mathcal T_Y$, $\mathcal T_S$ and $\mathcal T_{R_u(P)}$ 
be the sheaves of tangent vectors of $Y$, $S$ and $R_u(P)$, respectively. For any 
$\xi$ lying in the Lie algebra $\mathfrak L(H)$ of $H$, let $L_\xi$ be the vector field on $Y$ whose 
value at a $\overline K$-point $y$ in $Y$ is given by 
$$L_\xi(y)=\frac{\mathrm d}{\mathrm dt}\Big|_{t=0}(e^{t\xi}y).$$
Let $\mathcal T_0$ be the 
$\mathcal{O}_{Y}$-submodule of $\mathcal T_Y$ generated by $L_{\xi}$ ($\xi\in\mathfrak L(H)$), and let
$\mathcal T_1$ be the $\mathcal{O}_{S}$-submodule of $\mathcal T_S$ generated by $L_{\xi}$ for $\xi$ lying in the
Lie algebra $\mathfrak L(L)$ of $L$. Denote the Lie algebra of $R_u(P)$ by $\mathfrak L(R_u(P))$. 

\begin{lemma}\label{reducetotoric} 
$\mathcal T_0|_{Y-\delta}$ coincides with the $\mathcal{O}_{Y-\delta}$-submodule 
of $\mathcal T_{Y-\delta}$ generated by $L_{\xi}$ with $\xi\in\mathfrak L(L)\cup \mathfrak L(R_{u}(P))$, 
and $\mathcal T_0|_{Y-\delta} \cong \mathcal T_{R_{u}(P)}\boxplus \mathcal T_1$.
\end{lemma}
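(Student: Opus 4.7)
The plan is to use the $P$-equivariant isomorphism $R_u(P) \times S \cong Y - \delta$ and the resulting splitting $\mathcal T_{Y-\delta} \cong \mathcal T_{R_u(P)} \boxplus \mathcal T_S$. Under this splitting, the lemma reduces to the assertion that both $\mathcal T_0|_{Y-\delta}$ and the $\mathcal O_{Y-\delta}$-submodule generated by $L_\xi$ with $\xi \in \mathfrak L(L) \cup \mathfrak L(R_u(P))$ coincide with $\mathcal T_{R_u(P)} \boxplus \mathcal T_1$.

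First I would verify that the submodule generated by $\mathfrak L(L) \cup \mathfrak L(R_u(P))$ equals $\mathcal T_{R_u(P)} \boxplus \mathcal T_1$ by computing the action vector fields explicitly. Since $R_u(P)$ acts on the product by left multiplication on the first factor, for $\xi \in \mathfrak L(R_u(P))$ the field $L_\xi$ is the pullback of the corresponding left-invariant field on $R_u(P)$, and left-invariant fields trivialize the tangent bundle of a Lie group, so these generate $\mathcal T_{R_u(P)}$ as an $\mathcal O$-module. Using $(l, (u, s)) \mapsto (l u l^{-1}, l s)$ for the $L$-action, for $\xi \in \mathfrak L(L)$ the $\mathrm{pr}_2$-component of $L_\xi$ at $(u, s)$ is exactly the action vector field on $S$ induced by $\xi$, and these by definition generate $\mathcal T_1$. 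Combining, the two families of generators produce $\mathcal T_{R_u(P)} \boxplus \mathcal T_1$.

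Next I would show $\mathcal T_0|_{Y-\delta} \subseteq \mathcal T_{R_u(P)} \boxplus \mathcal T_1$; the reverse inclusion is immediate from the previous paragraph. For $\xi \in \mathfrak L(H)$ decompose $L_\xi = a_\xi + b_\xi$ along the splitting; the term $a_\xi$ automatically lies in $\mathcal T_{R_u(P)}$, so the issue is to show $b_\xi \in \mathcal T_1 \subseteq \mathcal T_S$. The key geometric input is that at $y = (u, s) \in Y - \delta$ we have $Hy \cap (Y - \delta) = Py = R_u(P) \cdot Ls$, which is open in the $H$-orbit $Hy$, so $T_y(Hy) = T_y(Py)$; projecting to $T_s S$ yields $b_\xi(u, s) \in T_s(Ls)$ at every point. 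Thus $b_\xi$ is pointwise tangent to the $L$-orbits on $S$.

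The main obstacle is to upgrade this pointwise orbit-tangency to sheaf-theoretic membership in $\mathcal T_1$. Since $[L, L]$ acts trivially on $S$, the $L$-action factors through a quotient torus of $L^{\mathrm{ab}}$, and $S$ is a smooth toric variety with the same fan as $Y$; I would identify $\mathcal T_1$ with the logarithmic tangent sheaf $\mathcal T_S(-\log B_S)$ of $S$ along its toric boundary $B_S$, using that the infinitesimal generators of the acting torus are precisely the standard log vector fields. Since being logarithmic is equivalent to being tangent to each torus stratum at every point, the restriction of $b_\xi$ to each slice $\{u_0\} \times S$ is a section of $\mathcal T_1$; combining this with the description of $\mathrm{pr}_2^* \mathcal T_1$ as the subsheaf of $\mathrm{pr}_2^* \mathcal T_S$ cut out by the log condition on coefficients in a local frame yields $b_\xi \in \mathrm{pr}_2^* \mathcal T_1$, completing the argument.
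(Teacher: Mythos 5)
Your proof is correct, but the route you take for the crucial containment $\mathcal T_0|_{Y-\delta}\subseteq \mathcal T_{R_u(P)}\boxplus\mathcal T_1$ is genuinely different from the paper's. Both arguments rest on the same geometric input from the local structure theorem, namely $Hy\cap(Y-\delta)=Py=R_u(P)Ly$, which gives $T_y(Hy)=T_y(Py)$ at every point. The paper exploits this via Nakayama's lemma: since the submodule $\mathcal F$ generated by $L_\xi$, $\xi\in\mathfrak L(L)\cup\mathfrak L(R_u(P))$, sits inside $\mathcal T_0|_{Y-\delta}$ as an inclusion of coherent subsheaves generated by action fields, and in characteristic $0$ their fibres at a closed point $y$ are the orbit tangent spaces $T_y(Py)$ and $T_y(Hy)$, equality of fibres forces equality of sheaves, with $P$-equivariance reducing the check to points of $S$. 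You instead prove the reverse inclusion directly by identifying $\mathcal T_1$ with the logarithmic tangent sheaf of the smooth toric variety $S$ and using that membership in $\mathcal T_S(-\log)$ is detected by pointwise tangency to the torus strata. Your route needs the extra observation that pointwise fibrewise membership suffices --- false for arbitrary coherent subsheaves, and valid here only because the log tangent sheaf is cut out by vanishing conditions along reduced divisors; you correctly flag and handle this, and the lift from slices $\{u_0\}\times S$ to $\mathrm{pr}_2^*\mathcal T_1$ goes through by reducedness of $R_u(P)\times D_i$. The paper's Nakayama argument is shorter and avoids the toric identification, while yours makes the structure of $\mathcal T_1$ explicit in a way that anticipates the computation $\mathcal T_1\cong\mathcal T_{\mathbb G_m^s}\boxplus(\mathcal O_{\mathbb A^1}x\partial_x)^{\boxplus t}$ used in the proof of Proposition \ref{toroidalDmodiso}. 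One cosmetic slip: for $\xi\in\mathfrak L(R_u(P))$ acting by left translation, $L_\xi$ is the right-invariant (not left-invariant) field on the first factor, though either family trivializes $\mathcal T_{R_u(P)}$.
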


\begin{proof}
Let $\mathcal F$ be the $\mathcal{O}_{Y-\delta}$-submodule 
of $\mathcal T_{Y-\delta}$ generated by $L_{\xi}$ with $\xi\in\mathfrak L(L)\cup\mathfrak L(R_{u}(P))$. 
We have an inclusion
$$\mathcal F\hookrightarrow \mathcal T_0|_{Y-\delta}.$$ This inclusion is $P$-equivariant.
Note that any $P$-orbit in $Y- \delta$ is of the form $Py$ for some $\overline K$-point $y$ in $S$.
To prove the above inclusion is an isomorphism, 
it suffices to show $$\mathcal F_y/\mathfrak m_y\mathcal F_y \cong  
\mathcal T_{0,y}/\mathfrak m_y\mathcal T_{0,y}$$ for any $\overline K$-point $y$ in $S$ by Nakayama's lemma.
Since $\overline K$ is of characteristic $0$, $Hy$ and $Py$ are smooth and 
$$\mathcal T_{0,y}/\mathfrak m_y\mathcal T_{0,y}\cong T_y(Hy),\quad 
\mathcal F_y/\mathfrak m_y\mathcal F_y\cong T_y(Py),$$ where $T_y(Hy)$ and $T_y(Py)$ are the tangent spaces 
of $Hy$ and $Py$ at $y$. We may identify $T_y(Hy)$ with $T_y(Py)$ since
$$Hy\cap(Y-\delta)=R_{u}(P)Ly=Py.$$ So we have $\mathcal F_y/\mathfrak m_y\mathcal F_y \cong  
\mathcal T_{0,y}/\mathfrak m_y\mathcal T_{0,y}$. The second assertion follows from the first assertion and the fact that 
$Y-\delta\cong R_u(P)\times S$. 
\end{proof}

\begin{proposition}\label{toroidalDmodiso} Suppose $Y$ is a smooth toroidal spherical variety with the open $H$-orbit $Hy_0$. 
Let $l: Hy_0\hookrightarrow Y$ be the open immersion. In the derived category $D_h^b(\mathcal D_Y)$, 
we have an isomorphism 
$$\gamma: Rl_{!}\mathcal{O}_{Hy_0}\stackrel\cong\to \mathcal{D}_Y\big/\sum_{\xi\in \mathfrak L(H)}\mathcal{D}_YL_{\xi}.$$ 
\end{proposition}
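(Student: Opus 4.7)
The plan is to construct $\gamma$ via adjunction from the identity on $\mathcal O_{Hy_0}$, then verify it is an isomorphism by reducing via the local structure of Lemma \ref{reducetotoric} to a computation on a smooth toric variety.

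First I would construct $\gamma$. Because $H$ acts transitively on $Hy_0$, the vector fields $L_\xi$ $(\xi\in \mathfrak L(H))$ span the tangent sheaf $\mathcal T_{Hy_0}$ at every point, so the restriction of $\mathcal M:=\mathcal D_Y/\sum_{\xi}\mathcal D_Y L_\xi$ to $Hy_0$ equals $\mathcal D_{Hy_0}/\mathcal D_{Hy_0}\mathcal T_{Hy_0}\cong\mathcal O_{Hy_0}$. Since $l$ is an open immersion we have $l^!=l^*$ (up to shift), and the identity $\mathcal O_{Hy_0}\to l^*\mathcal M$ yields by $(l_!,l^!)$-adjunction a canonical map $\gamma: Rl_!\mathcal O_{Hy_0}\to\mathcal M$ whose restriction to $Hy_0$ is the identity. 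By construction, $\gamma$ is $H$-equivariant, and so are both source and target; since $Y-\delta$ contains the slice $S$ and every $H$-orbit meets $S$, we have $H\cdot(Y-\delta)=Y$, and it suffices to verify that $\gamma$ is an isomorphism on $Y-\delta$.

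On $Y-\delta\cong R_u(P)\times S$, Lemma \ref{reducetotoric} gives $\mathcal T_0|_{Y-\delta}\cong\mathcal T_{R_u(P)}\boxplus\mathcal T_1$, so
\[\mathcal M|_{Y-\delta}\cong \mathcal O_{R_u(P)}\boxtimes\bigl(\mathcal D_S/\mathcal D_S\mathcal T_1\bigr),\]
using that $\mathcal D_{R_u(P)}/\mathcal D_{R_u(P)}\mathcal T_{R_u(P)}\cong\mathcal O_{R_u(P)}$. Similarly $Hy_0\cap(Y-\delta)=R_u(P)\times Ly_0$, hence
\[Rl_!\mathcal O_{Hy_0}|_{Y-\delta}\cong\mathcal O_{R_u(P)}\boxtimes Rl_{S,!}\mathcal O_{Ly_0},\]
where $l_S:Ly_0\hookrightarrow S$ is the open torus orbit. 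So the proposition reduces to the toric statement $Rl_{S,!}\mathcal O_{Ly_0}\cong\mathcal D_S/\mathcal D_S\mathcal T_1$ on the smooth toric variety $S$ whose torus is a quotient of $L^{\mathrm{ab}}$ (recall $[L,L]$ acts trivially on $S$), with $\mathcal T_1$ spanned by the action fields of that torus.

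For the toric case, I would work on a smooth affine chart $U_\sigma\cong\mathbb A^r\times\mathbb G_m^{n-r}$ with toric coordinates $(x_1,\dots,x_n)$ in which $\mathcal T_1$ is generated by the Euler fields $x_i\partial_{x_i}$, and verify by a direct Koszul-type computation that the sequence $x_1\partial_{x_1},\dots,x_n\partial_{x_n}$ is regular in $\mathcal D_{U_\sigma}$ so that
\[\mathcal D_{U_\sigma}\big/\textstyle\sum_i \mathcal D_{U_\sigma}\,x_i\partial_{x_i}\cong Rl_{U_\sigma,!}\mathcal O_{(\mathbb G_m)^n};\]
these local isomorphisms then glue by the equivariance of both sides under the torus action and the naturality of $\gamma$. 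The main obstacle is precisely this toric step: one must carry out the Koszul resolution computation carefully, identify the resulting object with the $!$-extension (for instance via Verdier duality and the simpler computation of $Rl_{+}\mathcal O$ as the algebra of functions with poles along the coordinate divisors), and then verify the gluing across charts of $S$ in a way compatible with the product decomposition on $Y-\delta$ and with the $P$-equivariance used to propagate the conclusion from $Y-\delta$ to all of $Y$.
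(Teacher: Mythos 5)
Your proposal follows essentially the same route as the paper's proof: construct $\gamma$ by adjunction from $\mathcal O_{Hy_0}\cong Rl^!(\mathcal D_Y/\sum_\xi\mathcal D_YL_\xi)$, use $H$-equivariance together with $H(Y-\delta)=Y$ (for toroidal $Y$) to reduce to the slice, and apply the local structure theorem and Lemma \ref{reducetotoric} to reduce to a toric computation. The only cosmetic difference is in that last step: the paper factors everything into one-variable external products and proves $Rj_!\mathcal O_{\mathbb G_m}\cong\mathcal D_{\mathbb A^1}/\mathcal D_{\mathbb A^1}x\partial_x$ by dualizing to $Rj_+\mathcal O_{\mathbb G_m}=k[x,x^{-1}]$ and an explicit induction (Lemma \ref{affineline}), which is precisely the Koszul-plus-duality computation you outline in several variables.
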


\begin{proof} We work on the derived category of $\mathcal D$-modules consisting of bounded complexes 
with holonomic cohomologies, on which we have the six-functor formalism. By Proposition \ref{Nequi} below, 
$\mathcal{D}_Y\big/\sum_{\xi\in \mathfrak L(H)}\mathcal{D}_YL_{\xi}$ is a coherent 
equivariant left $\mathcal D_Y$-module. By \cite[II 5 Theorem]{Hotta}, it is (regular) holonomic.
We have
\begin{align*}
&\mathcal D_{Hy_0} 
\big/\sum_{\xi\in\mathfrak L(H)}\mathcal D_{Hy_0} L_\xi  
\cong \mathcal O_{Hy_0}\nonumber
\end{align*}
as left $\mathcal D_{Hy_0}$-modules. 
So we have 
$$\mathcal O_{Hy_0}\cong Rl^!\Big(\mathcal{D}_Y\big/\sum_{\xi\in \mathfrak L(H)}\mathcal{D}_YL_{\xi}\Big).$$ 
Via adjoint we get a morphism $$\gamma: Rl_!\mathcal O_{Hy_0}\to \mathcal{D}_Y\big/\sum_{\xi\in \mathfrak L(H)}\mathcal{D}_YL_{\xi}.$$
Proving $\gamma$ is an isomorphism is a local problem. 
So we may assume $Y$ is a simple spherical variety, that is, $Y$ has a unique closed $H$-orbit. 
The morphism $\gamma$ is $H$-equivariant.  Since $Y$ is toroidal, we have 
$H(Y-\delta)=Y$. So it suffices to prove $\gamma|_{Y-\delta}$ is an isomorphism. 
Fix notation by the following diagram of Cartesian squares
$$\begin{tikzcd}&Hy_0\cap (Y-\delta)\arrow[d]\arrow[r, "l",hook] &Y-\delta \arrow[d]\arrow[r]&R_{u}(P)\ar[d]\\
&Ly_0\arrow[r, "l'",hook] &S\arrow[r] &\operatorname{Spec} k.
\end{tikzcd}$$
We have $$Rl_!\mathcal{O}_{Hy_0}|_{Y-\delta}\cong \mathcal{O}_{R_{u}(P)}\boxtimes Rl'_!\mathcal{O}_{Ly_0}.$$ 
Since $Y$ is simple and smooth, so is $S$. Hence $\iota': Ly_0\hookrightarrow S$ can be identified with 
$$\mathbb{G}_m^s\times \mathbb{G}_m^t\hookrightarrow \mathbb{G}_m^s\times \mathbb{A}^t$$  
for some integers $s$ and $t$. Let $j: \mathbb G_m\hookrightarrow \mathbb A^1$ be the open immersion. We have
\begin{eqnarray}\label{leftside}
\big(Rl_!\mathcal{O}_{Hy_0}\big)|_{Y-\delta}\cong \mathcal{O}_{R_{u}(P)}
\boxtimes \mathcal{O}_{\mathbb{G}_m^s}\boxtimes \big(\boxtimes^{t}Rj_{!}\mathcal{O}_{\mathbb{G}_m}\big).
\end{eqnarray}
Let $x$ be the coordinate of $\mathbb{A}^1$.
The action of $L$ on $S$ factors through a quotient torus isomorphic to $\mathbb{G}_m^s\times \mathbb{G}_m^t$.  We have
$$\mathcal T_1\cong \mathcal T_{\mathbb{G}_m^s}\boxplus (\mathcal{O}_{\mathbb{A}^1} x\partial_{x})^{\boxplus t}.$$ 
Combined with Lemma \ref{reducetotoric}, we get
$$\mathcal T_0|_{Y-\delta}\cong \mathcal T_{R_{u}(P)}\boxplus \mathcal T_1\cong \mathcal T_{R_{u}(P)}
\boxplus \mathcal T_{\mathbb{G}_m^s}\boxplus (\mathcal{O}_{\mathbb{A}^1} x\partial_{x})^{\boxplus t}.$$ 
We conclude that 
\begin{align}\label{rightside}
&\big(\mathcal{D}_Y\big/\sum_{\xi\in\mathfrak L(H)}\mathcal{D}_YL_{\xi}\big)|_{Y-\delta}
=(\mathcal{D}_Y/\mathcal{D}_Y\mathcal T_0)\big |_{Y-\delta}\\
\cong\;& \big(\mathcal{D}_{R_{u}(P)}/\mathcal{D}_{R_{u}(P)}\mathcal T_{R_{u}(P)}\big)\boxtimes
\big(\mathcal{D}_{\mathbb{G}_m^s}/\mathcal{D}_{\mathbb{G}_m^s} \mathcal 
T_{\mathbb{G}_m^s}\big)\boxtimes (\mathcal{D}_{\mathbb{A}^1}/\mathcal{D}_{\mathbb{A}^1}x\partial_{x})^{\boxtimes t}\nonumber \\
\cong\;&  \mathcal{O}_{R_{u}(P)}\boxtimes  \mathcal{O}_{\mathbb{G}_m^s}
\boxtimes(\mathcal{D}_{\mathbb{A}^1}/\mathcal{D}_{\mathbb{A}^1}x\partial_{x})^{\boxtimes t}.\nonumber
\end{align} 
By Lemma \ref{affineline} below, we have 
$\mathcal{D}_{\mathbb{A}^1}/\mathcal{D}_{\mathbb{A}^1}x\partial_{x}\cong R j_!\mathcal{O}_{\mathbb{G}_m}$. 
Our assertion follows from (\ref{leftside}) and (\ref{rightside}).
\end{proof}

Let $j: \mathbb{G}_m\to \mathbb{A}^1$ be the open immersion. Since 
$\mathcal{D}_{\mathbb G_m} \partial_x =\mathcal{D}_{\mathbb G_m} x\partial_x$, we have canonical isomorphisms
$$\mathcal {O}_{\mathbb{G}_m}\cong \mathcal{D}_{\mathbb G_m }/\mathcal{D}_{\mathbb G_m} \partial_x
=\mathcal{D}_{\mathbb G_m }/\mathcal{D}_{\mathbb G_m} x\partial_x 
\cong Rj^!(\mathcal{D}_{\mathbb{A}^1}/\mathcal{D}_{\mathbb{A}^1}x\partial_{x}).$$
By adjoint, it induces a morphism 
$$\alpha: Rj_!\mathcal{O}_{\mathbb{G}_m}\to \mathcal{D}_{\mathbb{A}^1}/\mathcal{D}_{\mathbb{A}^1}x\partial_{x}.$$

\begin{lemma}\label{affineline} The above morphism $\alpha:
Rj_!\mathcal{O}_{\mathbb{G}_m}\stackrel\cong\to \mathcal{D}_{\mathbb{A}^1}/\mathcal{D}_{\mathbb{A}^1}x\partial_{x}$
is an isomorphism.
\end{lemma}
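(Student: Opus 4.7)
My plan is to identify both $Rj_!\mathcal{O}_{\mathbb{G}_m}$ and $M := \mathcal{D}_{\mathbb{A}^1}/\mathcal{D}_{\mathbb{A}^1} x\partial_x$ as (the unique up to scalar) non-trivial extension of $\mathcal{O}_{\mathbb{A}^1}$ by the delta module $i_+\mathcal{O}_{\{0\}}$ (with $i:\{0\}\hookrightarrow\mathbb{A}^1$), and then deduce that the adjunction morphism $\gamma$ realizes this common identification. Since $\gamma$ is built from the isomorphism $\mathcal{O}_{\mathbb{G}_m}\stackrel{\cong}{\to}j^*M$, we have $j^*\gamma=\mathrm{id}_{\mathcal{O}_{\mathbb{G}_m}}$, so $\gamma$ is already an isomorphism over $\mathbb{G}_m$; all the work is at the origin.

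For $M$, since the Weyl algebra $\mathcal{D}_{\mathbb{A}^1}$ is a domain, right multiplication by $\partial_x$ is injective, inducing an isomorphism $\mathcal{D}_{\mathbb{A}^1}/\mathcal{D}_{\mathbb{A}^1} x \stackrel{\cong}{\to} \mathcal{D}_{\mathbb{A}^1}\partial_x/\mathcal{D}_{\mathbb{A}^1} x\partial_x$. Combining this with the filtration $\mathcal{D}_{\mathbb{A}^1}\supset \mathcal{D}_{\mathbb{A}^1}\partial_x\supset \mathcal{D}_{\mathbb{A}^1} x\partial_x$ and the identifications $\mathcal{D}_{\mathbb{A}^1}/\mathcal{D}_{\mathbb{A}^1} x=i_+\mathcal{O}_{\{0\}}$, $\mathcal{D}_{\mathbb{A}^1}/\mathcal{D}_{\mathbb{A}^1}\partial_x=\mathcal{O}_{\mathbb{A}^1}$, I obtain
\[
0 \to i_+\mathcal{O}_{\{0\}} \to M \to \mathcal{O}_{\mathbb{A}^1}\to 0. \qquad (\ast)
\]
For $Rj_!\mathcal{O}_{\mathbb{G}_m}$, I would invoke the duality formula $j_!=\mathbb{D}_{\mathbb{A}^1}\circ j_+\circ\mathbb{D}_{\mathbb{G}_m}$ and the self-duality of $\mathcal{O}_{\mathbb{G}_m}$, then apply $\mathbb{D}_{\mathbb{A}^1}$ to the standard sequence $0\to\mathcal{O}_{\mathbb{A}^1}\to K[x,x^{-1}]\to i_+\mathcal{O}_{\{0\}}\to 0$ describing $j_+\mathcal{O}_{\mathbb{G}_m}=K[x,x^{-1}]$; using the self-duality of the outer terms this yields
\[
0 \to i_+\mathcal{O}_{\{0\}} \to Rj_!\mathcal{O}_{\mathbb{G}_m} \to \mathcal{O}_{\mathbb{A}^1}\to 0. \qquad (\ast\ast)
\]

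Next, using the projective resolution $\mathcal{D}_{\mathbb{A}^1}\xrightarrow{\cdot\,\partial_x}\mathcal{D}_{\mathbb{A}^1}\to\mathcal{O}_{\mathbb{A}^1}$, I would compute $\mathrm{Hom}_{\mathcal{D}_{\mathbb{A}^1}}(\mathcal{O}_{\mathbb{A}^1},i_+\mathcal{O}_{\{0\}})=0$ and $\mathrm{Ext}^1_{\mathcal{D}_{\mathbb{A}^1}}(\mathcal{O}_{\mathbb{A}^1},i_+\mathcal{O}_{\{0\}})\cong K$, concluding that there is a unique non-split extension up to isomorphism. That $(\ast)$ is itself non-split I would check directly by taking the PBW-type basis $\{x^a\bar 1\}_{a\ge 0}\cup\{\partial_x^b\bar 1\}_{b\ge 1}$ of $M$ forced by the relation $x\partial_x\bar 1=0$ and verifying $\ker(\partial_x:M\to M)=0$; non-splitness of $(\ast\ast)$ then follows by duality. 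The construction of $\gamma$ forces a commutative diagram between $(\ast\ast)$ and $(\ast)$ whose right vertical is the identity on $\mathcal{O}_{\mathbb{A}^1}$, because the composite $Rj_!\mathcal{O}_{\mathbb{G}_m}\xrightarrow{\gamma}M\twoheadrightarrow\mathcal{O}_{\mathbb{A}^1}$ is adjoint to $\mathrm{id}_{\mathcal{O}_{\mathbb{G}_m}}$. The induced map on the kernels $i_+\mathcal{O}_{\{0\}}\to i_+\mathcal{O}_{\{0\}}$ is a $\mathcal{D}$-endomorphism of a simple module (Kashiwara's equivalence) and so, by Schur, is either zero or an isomorphism; the first possibility would make $\gamma$ factor through $\mathcal{O}_{\mathbb{A}^1}$ and split $(\ast)$, a contradiction. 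Hence by the five lemma $\gamma$ is an isomorphism.

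The main technical obstacle will be the explicit Weyl-algebra computation $\ker(\partial_x\cdot:M\to M)=0$; the rest is a formal game with adjunctions, extensions, and the five lemma.
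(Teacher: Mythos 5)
Your proposal is correct, and it takes a genuinely different route from the paper. The paper dualizes: it passes from $\gamma$ to the dual map $\beta:\mathcal D_{\mathbb A^1}/\mathcal D_{\mathbb A^1}\partial_x x\to Rj_+\mathcal O_{\mathbb G_m}$, identifies $R^0j_+\mathcal O_{\mathbb G_m}$ with $K[x,x^{-1}]$ (so surjectivity follows because $x^{-1}$ generates it over $\mathcal D_{\mathbb A^1}$), and proves injectivity by an explicit Weyl-algebra induction: if $x^m f\in\Gamma(\mathcal D_{\mathbb A^1}\partial_x x)$ then already $x^{m-1}f\in\Gamma(\mathcal D_{\mathbb A^1}\partial_x x)$. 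You instead exhibit both sides as non-split extensions of $\mathcal O_{\mathbb A^1}$ by $i_+\mathcal O_{\{0\}}$, compute $\mathrm{Ext}^1_{\mathcal D}(\mathcal O_{\mathbb A^1},i_+\mathcal O_{\{0\}})\cong K$, and conclude via Schur's lemma for the simple module $i_+\mathcal O_{\{0\}}$ and the five lemma. Your argument is more structural (it explains \emph{why} the lemma is true: there is only one candidate module) and inputs slightly heavier standard machinery (holonomic duality, Kashiwara's equivalence); the paper's is more elementary and entirely ring-theoretic. Interestingly, the unavoidable explicit computation appears in both, in dual guises: your verification that $\ker(\partial_x\cdot:M\to M)=0$ (equivalently $\mathrm{Hom}(\mathcal O_{\mathbb A^1},M)=0$, i.e.\ non-splitness of $(\ast)$) plays exactly the role of the paper's injectivity induction for $\beta$. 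Two small points you should make explicit when writing this up: that $Rj_!\mathcal O_{\mathbb G_m}$ is concentrated in degree $0$ (it is $\mathbb D\,j_*\mathbb D$ of a holonomic module), and that $\gamma$ really does preserve the two-step filtrations --- this holds because the image of the sub $i_+\mathcal O_{\{0\}}\subset Rj_!\mathcal O_{\mathbb G_m}$ is supported at the origin while $M/(i_+\mathcal O_{\{0\}})\cong K[x]$ has no nonzero sections supported at a point.
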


\begin{proof} We first construct an isomorphism 
\begin{eqnarray}\label{eqn:Rj_+}
\beta: \mathcal{D}_{\mathbb{A}^1}/\mathcal{D}_{\mathbb{A}^1}\partial_{x} x
\stackrel\cong\to Rj_+\mathcal O_{\mathbb G_m}.
\end{eqnarray}
Since $j$ is an affine open immersion, we have
$Rj_+\mathcal O_{\mathbb G_m}\cong j_* \mathcal O_{\mathbb G_m}$. The 
$\overline K[x,\partial_x]$-module corresponding to $j_* \mathcal O_{\mathbb G_m}$ is 
$\overline K[x, x^{-1}]$. It is generated by $x^{-1}$. 
We have an epimorphism 
$$\overline K[x,\partial_x]\to \overline K[x, x^{-1}], \quad P\mapsto Px^{-1}.$$
Clearly $\partial_x x$ is contained in the kernel. We claim that the kernel is exactly
$\overline K[x,\partial_x] \partial_x x$, which proves (\ref{eqn:Rj_+}). 
For any $P$ lying in the kernel, $P x^{-1}$ annihilates  $1$. The annihilator of 
$1\in \overline K[x,x^{-1}]$ over the Weyl algebra
$$\overline K[x,x^{-1}, \partial_x]\cong \Gamma(\mathbb G_m, \mathcal D_{\mathbb G_m})$$
is $\overline K[x,x^{-1}, \partial_x]\partial_x$. 
So we have $Px^{-1}\in \overline K[x,x^{-1}, \partial_x] \partial_x$ and hence $P\in \overline K[x,x^{-1}, \partial_x] \partial_xx$. Thus $x^m P\in \overline K[x,\partial_x] \partial_x x$ for some nonnegative integer 
$m$. Suppose $m\geq 1$. Let's deduce $x^{m-1} P\in \overline K[x,\partial_x] \partial_x x$, which proves the claim. We can write
$$x^m P=\Big(\sum_{i\geq 0}  a_i \partial^i_x +xQ\Big) \partial_x x$$ 
for some $a_i\in \overline K$ and $Q\in \overline K[x,\partial_x]$. We then have 
$$\sum_{i\geq 0}  a_i \partial^{i+1}_x x = x^mP- xQ \partial_x x\in x\overline K[x,\partial_x].$$ Since $\partial_x^k x=k \partial_x^{k-1}+x\partial_x^k$, we have 
$$\sum_{i\geq 0}  a_i \partial^{i+1}_x x=\sum_{i\geq 0}  (i+1) a_i \partial^i_x +\sum_{i\geq 0}  a_i x \partial^{i+1}_x,$$ and it lies in 
$x \overline K[x,\partial_x]$ only when $a_i=0$ for all $i$. So $x^{m-1} P=Q\partial_x x\in \overline K[x,\partial_x] \partial_x x$.
We thus get the isomorphism 
$\beta: \mathcal{D}_{\mathbb{A}^1}/\mathcal{D}_{\mathbb{A}^1}\partial_{x} x\stackrel\cong\to Rj_+\mathcal O_{\mathbb G_m}$. 
By adjoint, $\beta$ corresponds to a morphism 
$$Rj^+(\mathcal{D}_{\mathbb{A}^1}/\mathcal{D}_{\mathbb{A}^1}\partial_{x} x)\to \mathcal O_{\mathbb G_m},$$
which can be identified with the morphism
\begin{eqnarray*}
\mathcal{D}_{\mathbb G_m}/\mathcal{D}_{\mathbb G_m}\partial_{x} x \to 
\mathcal{D}_{\mathbb G_m}/\mathcal{D}_{\mathbb G_m}\partial_{x}, \quad P\mapsto P x^{-1}
\end{eqnarray*}
by our construction.
Taking the dual of this morphism
and switching from right $\mathcal D$-modules to left $\mathcal D$-modules, we get the identity isomorphism 
$$\mathrm{id} :\mathcal{D}_{\mathbb G_m}/\mathcal{D}_{\mathbb G_m}\partial_{x} \stackrel{=}\to 
\mathcal{D}_{\mathbb G_m}/\mathcal{D}_{\mathbb G_m}x\partial_{x}.$$
Recall that $\alpha: Rj_!\mathcal{O}_{\mathbb{G}_m}\to \mathcal{D}_{\mathbb{A}^1}/\mathcal{D}_{\mathbb{A}^1}x\partial_{x}$
is obtained by adjoint from the composite of the canonical isomorphisms
$$\mathcal {O}_{\mathbb{G}_m}\cong \mathcal{D}_{\mathbb G_m }/\mathcal{D}_{\mathbb G_m} \partial_x
\stackrel{\mathrm{id}}=\mathcal{D}_{\mathbb G_m }/\mathcal{D}_{\mathbb G_m} x\partial_x 
\cong Rj^!(\mathcal{D}_{\mathbb{A}^1}/\mathcal{D}_{\mathbb{A}^1}x\partial_{x}).$$
Thus $\alpha$ is obtained from $\beta$ by duality. Since $\beta$ is an isomorphism, so is 
$\alpha$.
\end{proof}

\begin{remark}\label{mathcalB} Let $u: \mathbb A^1-\{1\}\hookrightarrow \mathbb A^1$ be the open immersion for
the complement of the 1-section. Via the translation $x\mapsto x-1$, Lemma \ref{affineline} implies that 
$$Ru_!Ru^!\mathcal O_{\mathbb A^1}\cong \mathcal D_{\mathbb A^1}/\mathcal D_{\mathbb A^1}(x-1)\partial_x.$$ 
The right $\mathcal D$-module version of this isomorphism is 
$$Ru_!Ru^!\omega_{\mathbb A^1}\cong\mathcal D_{\mathbb A^1}/\partial_x(x-1)\mathcal D_{\mathbb A^1},$$
that is, $\mathcal B\cong Ru_!Ru^!\omega_{\mathbb A^1}$ in the notation of \ref{FourD}. 
\end{remark}

\subsection{} For any smooth variety $X$ over $\overline K$, we have canonical monomorphisms
$$\mathcal D_X\to \mathcal End_{\overline K}(\mathcal O_X), \quad \mathcal D_X\to 
\mathcal End_{\overline K}(\omega_X)^{\mathrm{op}}$$ between 
$(\mathcal O_X,\mathcal O_X)$-bialgebras induced by the canonical left $\mathcal D_X$-module 
structure on $\mathcal O_X$ and the canonical right $\mathcal D_X$-module structure on $\omega_X$.
For any sections $\omega_0$ and $\omega$ of $\omega_X$ such that $\omega_0$ is a local basis, 
let $\frac{\omega}{\omega_0}$ be the section of 
$\mathcal O_X$ such that 
$$\omega=\frac{\omega}{\omega_0}\cdot \omega_0.$$ We have a canonical homomorphism 
\begin{eqnarray*}
\omega_X\otimes_{\mathcal O_X}\mathcal End_{\overline K}(\mathcal O_X)\to  \omega_X\otimes_{\mathcal O_X}\mathcal 
End_{\overline K}(\omega_X)^{\mathrm{op}}, 
\quad \eta \otimes P\mapsto \omega_0\otimes \Big(\omega\mapsto P\Big(\frac{\omega}{\omega_0}\Big)\eta\Big).
\end{eqnarray*}
Note that the expression on the righthand side is independent of the choice of the local basis $\omega_0$. It 
induces an isomorphism 
\begin{eqnarray}\label{leftright}
\omega_X\otimes_{\mathcal O_X}\mathcal D_X\stackrel\cong \to \omega_X\otimes_{\mathcal O_X} \mathcal D_X.
\end{eqnarray}
On $\omega_X\otimes_{\mathcal O_X}\mathcal D_X$, we have the following two right $\mathcal D_X$-module structures: Let
$\omega\otimes P$ be a section of $\omega_X\otimes_{\mathcal O_X}\mathcal D_X$ and let $\theta$ be a vector field on $X$
viewed as a section of $\mathcal D_X$, 
\begin{enumerate}[(1)]
\item The naive right action is given by 
$$(\omega\otimes P)\cdot\theta=\omega\otimes P\theta.$$
\item The right action is given by Leibniz rule 
$$(\omega\otimes P)\cdot \theta=\omega \theta\otimes P-\omega\otimes \theta P.$$
\end{enumerate} 
Note that these two right $\mathcal D_X$-module structures on $\omega_X\otimes_{\mathcal O_X}\mathcal D_X$
commute with each other. One can verify the following.

\begin{proposition} The isomorphism (\ref{leftright}) switches the above two right $\mathcal D_X$-module structures on 
$\omega_X\otimes_{\mathcal O_X}\mathcal D_X$.
\end{proposition}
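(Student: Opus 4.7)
The statement is local, so we work in local coordinates $x_1, \ldots, x_n$ on $X$ and take $\omega_0 = dx_1 \wedge \cdots \wedge dx_n$ as a local basis of $\omega_X$. Every section of $\omega_X \otimes_{\mathcal O_X} \mathcal D_X$ is then uniquely of the form $\omega_0 \otimes P$ for some $P \in \mathcal D_X$, via the tensor relation $f\omega_0 \otimes P = \omega_0 \otimes fP$.

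The first step is to identify the map $\Phi$ of (\ref{leftright}) in these coordinates. Unwinding the definition, $\omega_0 \otimes P$ is sent to $\omega_0 \otimes T_P$, where $T_P \in \mathcal End_K(\omega_X)^{\mathrm{op}}$ is the endomorphism $f\omega_0 \mapsto P(f)\omega_0$. Under the embedding $\mathcal D_X \hookrightarrow \mathcal End_K(\omega_X)^{\mathrm{op}}$ given by $Q \mapsto R_Q$ with $R_Q(\omega) = \omega \cdot Q$ (the right $\mathcal D_X$-action on $\omega_X$), we check by induction on the order of $P$, using the basic identity $(f\omega_0) \cdot \partial_i = -\partial_i(f)\omega_0$, that $T_P$ corresponds to the formal adjoint ${}^tP$ of $P$. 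Here ${}^t$ denotes the unique $K$-linear anti-involution of $\mathcal D_X$ characterized by ${}^tf = f$ for $f \in \mathcal O_X$, ${}^t\partial_i = -\partial_i$, and ${}^t(P_1 P_2) = {}^tP_2 \cdot {}^tP_1$. Thus (\ref{leftright}) reads $\omega_0 \otimes P \mapsto \omega_0 \otimes {}^tP$ in these coordinates.

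The second step expresses both right $\mathcal D_X$-module structures intrinsically on the parameter $P$. The naive action is simply right multiplication, $P \cdot_{\mathrm{naive}} Q = PQ$. For a vector field $\theta = \sum_j g_j \partial_j$, the formula $\omega_0 \cdot \theta = -\mathcal{L}_\theta(\omega_0) = -\mathrm{div}(\theta)\omega_0$ combined with $f\omega_0 \otimes P = \omega_0 \otimes fP$ yields
\[
(\omega_0 \otimes P) \cdot_{\mathrm{Leib}} \theta
= -\mathrm{div}(\theta)\omega_0 \otimes P - \omega_0 \otimes \theta P
= \omega_0 \otimes {}^t\theta \cdot P,
\]
where we use the identity ${}^t\theta = -\theta - \mathrm{div}(\theta)$, immediate from ${}^t(g_j\partial_j) = {}^t\partial_j \cdot {}^tg_j = -\partial_j g_j = -g_j\partial_j - \partial_j(g_j)$. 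By the right-module axiom $P \cdot (Q_1 Q_2) = (P \cdot Q_1) \cdot Q_2$, this formula extends uniquely to $P \cdot_{\mathrm{Leib}} Q = {}^tQ \cdot P$ for all $Q \in \mathcal D_X$.

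The intertwining then drops out in one line from ${}^t$ being an involutive anti-automorphism:
\[
\Phi(P \cdot_{\mathrm{Leib}} Q) = {}^t({}^tQ \cdot P) = {}^tP \cdot Q = \Phi(P) \cdot_{\mathrm{naive}} Q,
\]
and symmetrically $\Phi(P \cdot_{\mathrm{naive}} Q) = {}^t(PQ) = {}^tQ \cdot {}^tP = \Phi(P) \cdot_{\mathrm{Leib}} Q$. The main delicate point is step two: correctly identifying the Leibniz action with left multiplication by the formal adjoint of $\theta$, which hinges on the identity ${}^t\theta = -\theta - \mathrm{div}(\theta)$ and on the sign convention $\omega \cdot \theta = -\mathcal{L}_\theta(\omega)$ for the right $\mathcal D_X$-action on top forms; once these are set up, the anti-involution property of ${}^t$ makes the switching of the two structures essentially formal.
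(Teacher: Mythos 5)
Your proof is correct, and since the paper offers no argument for this proposition (it is stated with ``One can verify the following''), your coordinate computation via the formal adjoint ${}^tP$ — identifying (\ref{leftright}) with $\omega_0\otimes P\mapsto\omega_0\otimes{}^tP$ and the Leibniz action with $P\mapsto{}^tQ\cdot P$ — is exactly the standard verification the authors leave to the reader. The sign conventions ($\omega\cdot\theta=-\mathcal L_\theta\omega$, ${}^t\theta=-\theta-\mathrm{div}(\theta)$) are handled correctly, so nothing is missing.
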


As a corollary of this proposition, we have the following right $\mathcal D$-module version of Proposition \ref{toroidalDmodiso}.

\begin{corollary}\label{rightD-modversion}	
Keep the assumption and notation of Proposition \ref{toroidalDmodiso}. 

(i) We have an isomorphism of right $\mathcal{D}_Y$-modules  
$$Rl_{!}\omega_{Hy_0}\cong \big(\omega_{Y}\otimes_{\mathcal{O}_Y}\mathcal D_Y \big)
\big/\sum_{\xi\in\mathfrak L(H)}L'_\xi \big(\omega_{Y}\otimes_{\mathcal{O}_Y}\mathcal D_Y \big),$$ 
where the right $\mathcal{D}_Y$-module structure is induced by that on $\omega_{Y}\otimes_{\mathcal{O}_Y}\mathcal D_Y$
given by Leibniz rule, and  
$L'_\xi$ acts on $\omega_{Y}\otimes_{\mathcal{O}_Y}\mathcal D_Y$ via the naive right $\mathcal{D}_Y$-module structure. 

(ii) We have an isomorphism of right $\mathcal{D}_Y$-modules  
$$Rl_{!}\omega_{Hy_0}\cong\big(\omega_{Y}\otimes_{\mathcal{O}_Y}\mathcal D_Y \big)\big
/\sum_{\xi\in\mathfrak L(H)}L_\xi \big(\omega_{Y}\otimes_{\mathcal{O}_Y}\mathcal D_Y \big),$$
where the right $\mathcal{D}_Y$-module structure is induced by the naive right $\mathcal D_Y$-module structure 
on $\omega_{Y}\otimes_{\mathcal{O}_Y}\mathcal D_Y$, and  
$L_\xi$ acts on $\omega_{Y}\otimes_{\mathcal{O}_Y}\mathcal D_Y$ by the Leibniz rule. 
\end{corollary}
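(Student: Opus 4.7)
The plan is to deduce (i) by applying the standard left-to-right $\mathcal{D}$-module conversion functor $\omega_Y \otimes_{\mathcal{O}_Y}(-)$ to the isomorphism of Proposition \ref{toroidalDmodiso}, and then to deduce (ii) from (i) by invoking the preceding Proposition, which states that the isomorphism (\ref{leftright}) interchanges the Leibniz and naive right $\mathcal{D}_Y$-module structures on $\omega_Y \otimes_{\mathcal{O}_Y}\mathcal{D}_Y$.

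For (i), I would tensor the isomorphism of Proposition \ref{toroidalDmodiso} with $\omega_Y$ over $\mathcal{O}_Y$. On the source, the compatibility of the left-to-right conversion with the $\mathcal{D}$-module direct image $Rl_!$, together with $l^*\omega_Y \cong \omega_{Hy_0}$ for the open immersion $l$, gives $\omega_Y \otimes_{\mathcal{O}_Y} Rl_!\mathcal{O}_{Hy_0} \cong Rl_!\omega_{Hy_0}$. On the target, tensoring the left $\mathcal{D}_Y$-quotient $\mathcal{D}_Y/\sum_\xi \mathcal{D}_Y L_\xi$ with $\omega_Y$ produces the quotient of $\omega_Y \otimes_{\mathcal{O}_Y}\mathcal{D}_Y$ by $\sum_\xi \omega_Y \otimes_{\mathcal{O}_Y}(\mathcal{D}_Y L_\xi)$, which is exactly $\sum_\xi L'_\xi(\omega_Y \otimes_{\mathcal{O}_Y}\mathcal{D}_Y)$ in the notation of the Corollary (the right action of each $L_\xi$ being the naive one), while the right $\mathcal{D}_Y$-module structure induced by the conversion is by construction the Leibniz one. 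This is (i).

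For (ii), I would apply the automorphism (\ref{leftright}) of $\omega_Y \otimes_{\mathcal{O}_Y}\mathcal{D}_Y$ to the presentation obtained in (i). By the preceding Proposition this automorphism swaps the two commuting right $\mathcal{D}_Y$-actions on $\omega_Y \otimes_{\mathcal{O}_Y}\mathcal{D}_Y$. It therefore transports the submodule $\sum_\xi L'_\xi(\omega_Y \otimes_{\mathcal{O}_Y}\mathcal{D}_Y)$, generated via the naive action, onto the submodule $\sum_\xi L_\xi(\omega_Y \otimes_{\mathcal{O}_Y}\mathcal{D}_Y)$, generated via the Leibniz action, and at the same time carries the Leibniz module structure over to the naive one. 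Passing to the quotients yields the isomorphism claimed in (ii).

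The only point that needs genuine care is the bookkeeping of the two commuting right actions on $\omega_Y \otimes_{\mathcal{O}_Y}\mathcal{D}_Y$: one must keep straight which action provides the overall right $\mathcal{D}_Y$-module structure of the quotient and which one is being used to cut out the submodule. The preceding Proposition is tailored precisely to handle this swap, so once the identification in (i) has been made, the passage to (ii) is essentially formal.
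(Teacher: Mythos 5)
Your proof is correct and is exactly the argument the paper intends: the paper gives no written proof, presenting the corollary as an immediate consequence of the side-changing functor $\omega_Y\otimes_{\mathcal O_Y}(-)$ applied to Proposition \ref{toroidalDmodiso} (yielding (i)) followed by the preceding Proposition on the isomorphism (\ref{leftright}) swapping the two commuting right actions (yielding (ii)). Your bookkeeping of which action cuts out the submodule and which provides the residual module structure is accurate, so nothing is missing.
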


\subsection{}\label{resolution}
Assume $Y$ is an $H$-spherical variety, and let $Hy_0$ be the open $H$-orbit in $Y$. 
Choose a complete normal toroidal $H$-spherical variety $\tilde{Y}_0$ containing $Hy_0$
as an open $H$-orbit.
Let $\tilde{Y}_1$ be the normalization of the Zariski closure of $Hy_0$ in $Y\times \tilde{Y}_0$. Then
$\tilde{Y}_1$ is a toroidal $H$-spherical variety proper over $Y$. Refining the fan of $\tilde{Y}_1$, 
we get a smooth toroidal $H$-spherical variety $\tilde Y$ together with 
an $H$-equivariant birational proper morphism $p: \tilde{Y}\to Y$. 
Let $\tilde l: Hy_0\hookrightarrow \tilde{Y}$ be the open immersion. 
We have $R^ip_*\omega_{\tilde{Y}}=0$ for all $i\geq 1$ by the Grauert-Riemenschneider theorem. Set 
$$\omega_Y=p_*\omega_{\tilde Y}.$$ By \cite[Theorem 15.20]{Timashev}, $Y$ is Cohen-Macaulay and has rational singularity.
The sheaf $\omega_Y$ coincides with the dualizing sheaf for $Y$. Let $L_\xi$ $(\xi\in\mathfrak L(H))$ act on 
$\omega_{Y}$ via its action on $\omega_{\tilde{Y}}$.

\begin{proposition}\label{sphericalDmodisogen} 
Suppose $Y$ is a spherical variety with an affine open $H$-orbit $Hy_0$. 
Let $V$ be a smooth variety provided with an $H$-action, let 
$f:Y\to V$ be a finite $H$-equivariant morphism, let $\iota:Hy_0\to V$ be the composite of $$Hy_0\hookrightarrow Y\stackrel{f}\to V.$$ 
and let \begin{eqnarray*}
\mathcal N:=\big(f_*\omega_{Y}\otimes_{\mathcal{O}_V}\mathcal D_V \big)
\big/\sum_{\xi\in\mathfrak L(H)}L_\xi \big(f_*\omega_{Y}\otimes_{\mathcal{O}_V}\mathcal D_V \big),
\end{eqnarray*}
where the right $\mathcal{D}_V$-module structure is induced by the naive right $\mathcal D_V$-module structure 
on $\omega_{Y}\otimes_{\mathcal{O}_V}\mathcal D_V$, and  
$L_\xi$ acts on $\omega_{Y}\otimes_{\mathcal{O}_V}\mathcal D_V$ by the Leibniz rule. 	
Then $R\iota_{!}\omega_{Hy_0}$ and $\mathcal N$ are regular holonomic right $\mathcal D_V$-modules, and 
$R\iota_{!}\omega_{Hy_0}$ is a direct summand of $\mathcal N$. 
\end{proposition}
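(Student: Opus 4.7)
The plan is to reduce to the smooth toroidal case handled by Corollary \ref{rightD-modversion}, via the resolution $p\colon\tilde Y\to Y$ constructed in \ref{resolution}, and then push forward to $V$ along the morphism $q:=f\circ p$, which is proper because $f$ is finite and $p$ is proper. First, Corollary \ref{rightD-modversion}(ii) applied on the smooth toroidal spherical variety $\tilde Y$ yields an isomorphism of right $\mathcal D_{\tilde Y}$-modules $R\tilde l_!\omega_{Hy_0}\cong \tilde{\mathcal N}$, where
$$\tilde{\mathcal N}:=\bigl(\omega_{\tilde Y}\otimes_{\mathcal O_{\tilde Y}}\mathcal D_{\tilde Y}\bigr)\Big/\sum_{\xi\in\mathfrak L(H)}L_\xi\bigl(\omega_{\tilde Y}\otimes_{\mathcal O_{\tilde Y}}\mathcal D_{\tilde Y}\bigr).$$
Since $\iota=q\circ\tilde l$ and $q$ is proper, this gives $R\iota_!\omega_{Hy_0}\cong Rq_+\tilde{\mathcal N}$ as right $\mathcal D_V$-modules.

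Second, I would carry out the key auxiliary pushforward computation: that $Rq_+\tilde M$, for $\tilde M:=\omega_{\tilde Y}\otimes_{\mathcal O_{\tilde Y}}\mathcal D_{\tilde Y}$ with its naive right $\mathcal D_{\tilde Y}$-structure, is concentrated in degree $0$ and equals $f_*\omega_Y\otimes_{\mathcal O_V}\mathcal D_V$. Indeed, unwinding the definition of the right $\mathcal D$-module direct image and using the naive structure identifies $\tilde M\otimes_{\mathcal D_{\tilde Y}}\mathcal D_{\tilde Y\to V}$ with $\omega_{\tilde Y}\otimes_{q^{-1}\mathcal O_V}q^{-1}\mathcal D_V$; the projection formula applies since $\mathcal D_V$ is locally free over $\mathcal O_V$; and then the Grauert--Riemenschneider vanishing $R^{>0}p_*\omega_{\tilde Y}=0$, together with $p_*\omega_{\tilde Y}=\omega_Y$ from \ref{resolution} and the exactness of $f_*$ for the finite morphism $f$, gives the claimed identification.

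Next, I would apply $Rq_+$ to the canonical presentation $\bigoplus_\xi\tilde M\xrightarrow{\sum L_\xi}\tilde M\twoheadrightarrow\tilde{\mathcal N}$ and combine the degree-$0$ concentration above with the $H$-equivariance of $q$, which identifies the upstairs Leibniz actions of $\mathfrak L(H)$ with the Leibniz action on $f_*\omega_Y\otimes\mathcal D_V$. This produces a canonical morphism between $\mathcal N$ and $R^0\iota_!\omega_{Hy_0}$, realizing $R^0\iota_!\omega_{Hy_0}$ as a subquotient of $\mathcal N$ built from the image of $f_*\omega_Y\otimes\mathcal D_V\to R^0\iota_!\omega_{Hy_0}$ and, possibly, a contribution from the first higher pushforward of the submodule $\sum_\xi L_\xi\tilde M$.

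The hard part will be to upgrade this subquotient description into a direct summand decomposition. I expect the splitting to come from combining the reductivity of $H$, which gives semisimplicity of the relevant category of $H$-equivariant $\mathcal D_V$-modules supported on $H$-invariant subvarieties of $V$, with the rigidity supplied by the trace isomorphism $Rp_*\omega_{\tilde Y}\cong\omega_Y$ from \ref{resolution}: the higher-pushforward contributions along $q$ are supported on the boundary $Y\setminus Hy_0$, and equivariance forces them to split off as a complementary summand, leaving the open-orbit piece $R^0\iota_!\omega_{Hy_0}$ as a direct summand of $\mathcal N$.
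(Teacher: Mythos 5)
Your construction of the map $\psi:\mathcal N\to R^0\iota_!\omega_{Hy_0}$ is essentially the paper's: apply $R(fp)_+$ to the presentation of $R\tilde l_!\omega_{Hy_0}$ supplied by Corollary \ref{rightD-modversion} on the toroidal resolution, use Grauert--Riemenschneider together with $p_*\omega_{\tilde Y}=\omega_Y$ and exactness of $f_*$ to identify $\mathcal H^0$ of the pushforward of $\omega_{\tilde Y}\otimes_{\mathcal O_{\tilde Y}}\mathcal D_{\tilde Y}$ with $f_*\omega_Y\otimes_{\mathcal O_V}\mathcal D_V$, and pass to $\mathcal H^0$ to get a map out of $\mathcal N$. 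The genuine gap is the final step, which you yourself flag as the hard part: upgrading this comparison to a direct-summand statement. The mechanism you propose does not work. The category of $H$-equivariant $\mathcal D$-modules supported on finitely many orbits is \emph{not} semisimple for reductive $H$: already for $\mathbb G_m$ acting on $\mathbb A^1$, the equivariant module $\mathcal D_{\mathbb A^1}/\mathcal D_{\mathbb A^1}x\partial_x\cong Rj_!\mathcal O_{\mathbb G_m}$ of Lemma \ref{affineline} is a non-split extension of $\mathcal O_{\mathbb A^1}$ by the delta module at the origin. Worse, the object $R^0\iota_!\omega_{Hy_0}$ you are trying to split off is itself a $!$-extension and hence typically indecomposable but not simple, so no semisimplicity of an ambient equivariant category could produce the asserted splitting; the appeal to ``rigidity from the trace isomorphism'' is not an argument either.

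What the paper actually does to obtain the splitting is quite different and does not use equivariance at this point. Set $U=V-f(Y-Hy_0)$; since $f$ is finite one checks $f^{-1}(U)=Hy_0$ by a dimension count, so $\iota$ restricts over $U$ to a \emph{finite} morphism $f_0:Hy_0\to U$. Because $f_0$ is finite and $\omega_{Hy_0}$ is holonomic, $Rf_{0+}\omega_{Hy_0}=R^0f_{0+}\omega_{Hy_0}$ is computed explicitly in degree $0$ and identified with $Rj^!\mathcal N$ for the open immersion $j:U\hookrightarrow V$. The adjunction counit $R^0j_!Rj^!\mathcal N\to\mathcal N$ then produces a map $\phi:R^0\iota_!\omega_{Hy_0}\to\mathcal N$ in the direction opposite to your $\psi$, and the unit--counit (triangle) identity for the adjunction $(Rj_!,Rj^!)$, together with the observation that $\psi|_U$ inverts $\phi|_U$, gives $\psi\circ\phi=\mathrm{id}$. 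You would need to supply a construction of such a section $\phi$; your proposal as written does not contain one.
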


\begin{proof} We will prove $\mathcal N$ is an $H$-equivariant $\mathcal D_V$-module in Proposition \ref{Nequi} below. 
Then by \cite[II 5 Theorem]{Hotta}, $\mathcal N$ is a regular holonomic. 

We can factorize $\iota$ as the composite of the affine open immersion 
$Hy_0\hookrightarrow Y$ and the finite morphism $f:Y\to V$. By the proof of Proposition \ref{prop:perverseholonomic} (ii), 
for any holonomic $\mathcal D_{Hy_0}$-module $\mathcal F$, $R\iota_+\mathcal F$ is a holonomic $\mathcal D_V$-module. 
So $R\iota_!\mathcal F\cong \mathbb DR\iota_{+}\mathbb D\mathcal F$ is also a holonomic $\mathcal D_V$-module. As 
$R\iota_{!}\omega_{Hy_0}$ is $H$-equivariant, it is a regular holonomic $\mathcal D_V$-module.

Let $U=V-f(Y-Hy_0)$. Since $f$ is a closed map, $U$ is open in $V$. 
We claim that $Hy_0=f^{-1}(U)$. Clearly 
$f^{-1}(U)$ is a subset of $Hy_0$ and is invariant under the action of $H$. It suffices to show 
$f^{-1}(U)$ is nonempty. If this is not true, then $f(Y)=f(Y-Hy_0)$. This is impossible since 
$f$ is a finite morphism and 
$$\mathrm{dim}\, f(Y)=\mathrm{dim}\, Y>\mathrm{dim}\, (Y-Hy_0)=\mathrm{dim}\,f(Y-Hy_0).$$
Denote by $f_0: Hy_0\to U$ the morphism induced by $f$. It is a finite morphism. Let $j: U\hookrightarrow V$ be the open immersion.
We have 
a commutative diagram
$$\begin{tikzcd}
&Hy_0 \arrow[r, "f_0"] \arrow[dl, "\tilde l",swap, hook] \arrow[d,hook]\arrow[dr, "\iota"]&U\arrow[d, "j",hook]\\
\tilde{Y} \arrow[r, "p"] & Y \arrow[r,"f"] & V.
\end{tikzcd}$$ 
We have
\begin{align*}
&\big(\omega_{Hy_0}\otimes_{\mathcal{O}_{Hy_0}}\mathcal D_{Hy_0} \big)
\big/\sum_{\xi\in\mathfrak L(H)}L_\xi \big(\omega_{Hy_0}\otimes_{\mathcal{O}_{Hy_0}}\mathcal D_{Hy_0} \big)\\
\cong\,&\big(\omega_{Hy_0}\otimes_{\mathcal{O}_{Hy_0}}\mathcal D_{Hy_0} \big)
\big/\sum_{\xi\in\mathfrak L(H)}L'_\xi \big(\omega_{Hy_0}\otimes_{\mathcal{O}_{Hy_0}}\mathcal D_{Hy_0} \big)
\cong \omega_{Hy_0}\nonumber
\end{align*}
as right $\mathcal D_{Hy_0}$-modules, where in $\big(\omega_{Hy_0}\otimes_{\mathcal{O}_{Hy_0}}\mathcal D_{Hy_0} \big)
\big/\sum_{\xi\in\mathfrak L(H)}L'_\xi \big(\omega_{Hy_0}\otimes_{\mathcal{O}_{Hy_0}}\mathcal D_{Hy_0} \big)$, the 
right $\mathcal D$-module structure is given by the Leibniz rule and $L'_\xi$ acts via the naive right $\mathcal D$-module 
structure. For the naive right $\mathcal D$-module structure, we have 
\begin{align*}
Rf_{0+}(\omega_{Hy_0}\otimes_{\mathcal{O}_{Hy_0}}\mathcal D_{Hy_0})\cong\;&
Rf_{0*} \Big((\omega_{Hy_0}\otimes_{\mathcal{O}_{Hy_0}}\mathcal D_{Hy_0}) \otimes^L_{\mathcal D_{Hy_0}}
\mathcal{D}_{Hy_0\to U}\Big)\\
\cong\;&f_{0*} \Big((\omega_{Hy_0}\otimes_{\mathcal{O}_{Hy_0}}\mathcal D_{Hy_0}) \otimes_{\mathcal D_{Hy_0}}
(\mathcal{O}_{Hy_0}\otimes_{f_0^{-1} \mathcal O_U}f_0^{-1} \mathcal D_U)\Big)\\
\cong\;&f_{0*} (\omega_{Hy_0}\otimes_{f_0^{-1} \mathcal O_U}f_0^{-1} \mathcal D_U)\cong 
f_{0*}\omega_{Hy_0}\otimes_{\mathcal O_U}\mathcal D_U, 
\end{align*} where for the second isomorphism, we use the fact $Rf_{0*}=f_{0*}$ since 
$f_0$ is a finite morphism, and the fact that $\omega_{Hy_0}$ is a free $\mathcal O_{Hy_0}$-module so that we can replace 
$\otimes^L$ by $\otimes$. For a general quasi-coherent 
$\mathcal D_{Hy_0}$-module $\mathcal K$, since $f_0$ is a finite morphism, we have 
\begin{align}\label{Rf_+}
R^0f_{0+}(\mathcal K)\cong\;&
\mathcal H^0(Rf_{0*} (\mathcal K \otimes^L_{\mathcal D_{Hy_0}}
\mathcal D_{Hy_0\to U}))
\cong f_{0*}\mathcal H^0 (\mathcal K \otimes^L_{\mathcal D_{Hy_0}}
\mathcal D_{Hy_0\to U})\\
\cong\;&f_{0*}(\mathcal K \otimes_{\mathcal D_{Hy_0}}
\mathcal D_{Hy_0\to U}). \nonumber
\end{align} 
So $R^0f_{0,+}$ is a right exact functor on the category of $\mathcal D_{Hy_0}$-modules. We thus have
\begin{align*}
R^0f_{0+} \omega_{Hy_0}\cong\;& R^0f_{0+} \Big(\big(\omega_{Hy_0}
\otimes_{\mathcal{O}_{Hy_0}}\mathcal D_{Hy_0} \big)\big/\sum_{\xi\in\mathfrak L(H)}L_\xi 
\big(\omega_{Hy_0}\otimes_{\mathcal{O}_{Hy_0}}\mathcal D_{Hy_0} \big)\Big)\\
\cong\;& \big(f_{0*}\omega_{Hy_0}\otimes_{\mathcal{O}_U}\mathcal D_U \big)\big/\sum_{\xi\in\mathfrak L(H)}L_\xi 
\big(f_{0*}\omega_{Hy_0}\otimes_{\mathcal{O}_U}\mathcal D_U \big).
\end{align*}
For the holonomic $\mathcal D_{Hy_0}$-modules $\omega_{Hy_0}$, $Rf_{0+} \omega_{Hy_0}$ is a holonomic 
$\mathcal D_U$-module, and we have 
$$Rf_{0!} \omega_{Hy_0}\cong Rf_{0+} \omega_{Hy_0}\cong R^0f_{0+} \omega_{Hy_0}\cong
 \big(f_{0*}\omega_{Hy_0}\otimes_{\mathcal{O}_U}\mathcal D_U \big)\big/\sum_{\xi\in\mathfrak L(H)}L_\xi 
\big(f_{0*}\omega_{Hy_0}\otimes_{\mathcal{O}_U}\mathcal D_U \big)\cong Rj^!\mathcal N.$$ 
Denote the composite of these isomorphisms by $\phi': Rf_{0!} \omega_{Hy_0}\stackrel\cong \to Rj^!\mathcal N$. 
By adjoint it corresponds to a morphism $\phi: R\iota_!\omega_{Hy_0}\to \mathcal N$. Let's construct a left inverse for $\phi$.

For the naive right $\mathcal D$-module structure, we have 
\begin{align*}
R(fp)_+(\omega_{\tilde Y}\otimes_{\mathcal{O}_{\tilde Y}}\mathcal D_{\tilde Y})&\cong
f_*Rp_* \Big((\omega_{\tilde Y}\otimes_{\mathcal{O}_{\tilde Y}}\mathcal D_{\tilde Y}) \otimes^L_{\mathcal D_{\tilde Y}}
(\mathcal{O}_{\tilde Y}\otimes_{p^{-1}f^{-1} \mathcal O_V}p^{-1}f^{-1} \mathcal D_V)\Big)\\
&\cong
f_*Rp_* (\omega_{\tilde Y}\otimes_{p^{-1}f^{-1} \mathcal O_V}p^{-1}f^{-1} \mathcal D_V)
\cong f_*\omega_{Y}\otimes_{\mathcal O_V}\mathcal D_V.
\end{align*}
By Corollary \ref{rightD-modversion},  
we have an exact sequence
\[\bigoplus_{\xi\in\mathfrak L(H)}\omega_{\tilde{Y}}\otimes_{\mathcal{O}_{\tilde{Y}}}\mathcal D_{\tilde{Y}} \xrightarrow{\sum L_\xi} 
\omega_{\tilde{Y}}\otimes_{\mathcal{O}_{\tilde{Y}}}\mathcal D_{\tilde{Y}} \to  R\tilde l_!\omega_{Hy_0}\to 0.\] 
Applying $R(fp)_{+}$ to this sequence, we get morphisms 
\[\bigoplus_{\xi\in\mathfrak L(H)}\omega_{Y}\otimes_{\mathcal{O}_V}\mathcal D_V 
\xrightarrow{\sum L_\xi} \omega_{Y}\otimes_{\mathcal{O}_V}\mathcal D_V \to  R\iota_!\omega_{Hy_0}\] whose composite is $0$. 
We thus get a morphism 
$\psi:\mathcal N\to R\iota_!\omega_{Hy_0}$. The restriction 
$\psi': Rj^!\mathcal N \to Rf_{0!}\omega_{Hy_0}$ of $\psi$ to $U$ coincides with the inverse of $\phi'$ constructed above.
We have a commutative diagram
$$\begin{tikzcd}
R\iota_!\omega_{Hy_0}\arrow[d,"\phi"]\arrow[r,"\cong"]&Rj_!Rf_{0!}\omega_{Hy_0}\arrow[d,"Rj_!(\phi')"]&\\
\mathcal N\arrow[d,"\psi"]&Rj_!Rj^!\mathcal N\arrow[l, "\mathrm{adj}_{\mathcal N}",swap]
\arrow[d,"Rj_!Rj^!(\psi)"]\arrow[r,"Rj_!(\psi')(=Rj_!(\phi'^{-1}))"]&Rj_!Rf_{0!}\omega_{Hy_0}
\arrow[d,"Rj_!(\mathrm{adj}_{Rf_{0!}\omega_{Hy_0}})","\cong"']\\
R\iota_!\omega_{Hy_0}&(Rj_!Rj^!)R\iota_!\omega_{Hy_0}\arrow[l, "\mathrm{adj}_{R\iota_!\omega_{Hy_0}}",swap]
&Rj_!(Rj^!Rj_!Rf_{0!})\omega_{Hy_0}\arrow[d,Rightarrow,no head]\\
&Rj_! Rf_{0!}\omega_{Hy_0}\arrow[ul,"\cong"]\quad &
\quad (Rj_!Rj^!)Rj_!Rf_{0!}\omega_{Hy_0}\arrow[ul,"\cong",swap]\arrow[l,"\mathrm{adj}_{Rj_! Rf_{0!} \omega_{Hy_0}}",swap].
\end{tikzcd}$$ 
The morphism 
$\psi$ is a left inverse of the morphism 
$\phi: R\iota_!\omega_{Hy_0}\to \mathcal N$ by the commutativity of the
above diagram and the fact that for $\mathcal F= Rf_{0!}\omega_{Hy_0}$, the composite
$$Rj_! \mathcal F\xrightarrow {Rj_!(\mathrm{adj}_{\mathcal F})} Rj_!(Rj^!Rj_!\mathcal F)
=(Rj_!Rj^!)Rj_!\mathcal F\xrightarrow{\mathrm{adj}_{Rj_!\mathcal F}}Rj_!\mathcal F$$
is identity.
\end{proof}

\subsection{Equivariant $\mathcal D$-modules} Suppose $X$ is a ${\overline K}$-variety endowed with an action 
$$A:H\times X\to X.$$ 
Let $p_2: H\times X\to X$ be the projection. A \emph{weakly $H$-equivariant left $\mathcal{D}_X$-module} is a pair 
$(\mathcal K, \phi)$ consisting of a 
left $\mathcal{D}_X$-module $\mathcal K$ and an $(\mathcal{O}_H\boxtimes \mathcal{D}_X)$-module isomorphism
\[\phi: A^*\mathcal K\to p_2^*\mathcal K\]
satisfying the cocycle condition in \ref{equiv}.
A weakly $H$-equivariant $\mathcal{D}_X$-module $(\mathcal K, \phi)$ is called \emph{strongly $H$-equivariant} or just
\emph{$H$-equivariant} for simplicity if $\phi$ is a 
$(\mathcal{D}_H\boxtimes \mathcal{D}_X)$-module isomorphism. 
For any $\xi\in\mathfrak L(H)$, denote by $L_\xi^H$ and $L_\xi$ the vector fields 
$$L_\xi^H(h)=\frac{\mathrm d}{\mathrm dt}\Big|_{t=0}(\exp(t\xi)h), \quad 
L_\xi(x)=\frac{\mathrm d}{\mathrm dt}\Big|_{t=0}(\exp(t\xi)x)$$ on $H$ and $X$, respectively.  
A weakly $H$-equivariant left $\mathcal{D}_X$-module $\mathcal K$  
is $H$-equivariant if and only if for any section $s$ of $\mathcal K$ and 
any $\xi\in \mathfrak L(H)$, we have
$$(L^H_\xi\otimes1)\phi(A^*(s))=\phi(A^*L_{\xi}(s)).$$ 
On the other hand, we have
\begin{align*}
(L^H_\xi\otimes1)\phi(A^*(s))|_{\{h\}\times X}&=\lim_{t\to 0}\frac{h^{-1}\exp(-t\xi)(s)-h^{-1}(s)}{t}
=h^{-1}\Big(\lim_{t\to 0}\frac{\exp(-t\xi)(s)-(s)}{t}\Big),\\
\phi(A^*L_{\xi}(s))|_{\{h\}\times X}&=h^{-1}L_\xi(s).
\end{align*}
So a weakly $H$-equivariant left $\mathcal{D}_X$-module $\mathcal K$  
is $H$-equivariant if and only if for any section $s$ of $\mathcal K$ and 
any $\xi\in \mathfrak L(H)$, we have
\begin{align}\label{lieder}
\lim_{t\to 0}\frac{\exp(-t\xi)(s)-(s)}{t}=L_\xi(s).
\end{align}
Similarly we can define weakly $H$-equivariant and $H$-equivariant right $\mathcal D_X$-modules. 

\begin{proposition}\label{Nequi} 
\begin{enumerate}[(i)]
\item Let $X$ be a variety with an $H$-action. Then 
$\mathcal D_X/\sum_{\xi\in\mathfrak L(H)} \mathcal D_X L_\xi$ is an $H$-equivariant left $\mathcal D_X$-module.
\item The $\mathcal D_V$-module $\mathcal N$ in Proposition \ref{sphericalDmodisogen} 
is an $H$-equivariant right $\mathcal D_V$-module. 
\end{enumerate}
\end{proposition}

\begin{proof} (i) The left $\mathcal D_X$-module $\mathcal D_X$ is equipped with a natural weakly $H$-equivariant 
$\mathcal{D}_X$-module structure $\phi: A^*\mathcal D_X\to p_2^*\mathcal D_X$.  
For any $(h,x)\in H\times X$, $\phi^{-1}_{(h,x)}$ maps a tangent vector 
$\theta\in \mathcal T_{X, x}\subset \mathcal D_{X,x}$ 
to $A(h,\hbox{-})_*(\theta) \in \mathcal D_{X,hx}$. The left ideal $\sum_{\xi\in\mathfrak L(H)}\mathcal D_{X}L_\xi$
of $\mathcal D_{X}$ is invariant under the action of $H$. So 
$\mathcal D_{X}/\sum_{\xi\in\mathfrak L(H)}\mathcal D_{X}L_\xi$ is a weakly $H$-equivariant 
$\mathcal{D}_{X}$-module. Let $\theta_1,\ldots, \theta_n$ be tangent vector fields 
on $X$. Note that $\lim_{t\to 0}\frac{\exp(-t\xi)(\theta_i)-\theta_i}{t}$ is just the Lie derivative $[L_\xi, \theta_i]$ of 
$L_\xi$ on $\theta_i$. For the sections $\theta_1\cdots\theta_n$ of 
$\mathcal D_X$, we have 
\begin{eqnarray*}
&&\lim_{t\to 0}\frac{\exp(-t\xi)(\theta_1\cdots\theta_n)-\theta_1\cdots\theta_n}{t}
=\sum_{i=1}^n \theta_1\cdots \theta_{i-1} [L_\xi, \theta_i]\theta_i\cdots\theta_n\\
&=& L_\xi\theta_1\cdots\theta_n- \theta_1\cdots\theta_n L_\xi 
\equiv L_\xi\theta_1\cdots\theta_n \mod 
\sum_{\xi\in\mathfrak L(H)}\mathcal D_XL_\xi
\end{eqnarray*}
So the condition (\ref{lieder}) holds for all sections of 
$\mathcal D_X/\sum_{\xi\in\mathfrak L(H)}\mathcal D_XL_\xi$. Hence $\mathcal D_X/\sum_{\xi\in\mathfrak L(H)}\mathcal D_XL_\xi$
is strongly $H$-equivariant. 

(ii) By (i), $\mathcal D_{\tilde{Y}}$ is weakly $H$-equivariant and 
$\mathcal D_{\tilde{Y}}/\sum_{\xi\in\mathfrak L(H)}\mathcal D_{\tilde{Y}}L_\xi$
is strongly $H$-equivariant.
So the right $\mathcal D_Y$-module 
$\omega_{\tilde Y}\otimes_{\mathcal O_{\tilde Y}}\mathcal D_{\tilde Y}$ is weakly $H$-equivariant, and 
$(\omega_{\tilde Y}\otimes_{\mathcal O_{\tilde Y}}\mathcal D_{\tilde Y})/
\sum_{\xi\in\mathfrak L(H)} L'_{\xi}(\omega_{\tilde Y}\otimes_{\mathcal O_{\tilde Y}}\mathcal D_{\tilde Y})$
is strongly $H$-equivariant, where the right $\mathcal D$-module structure on 
$\omega_{\tilde Y}\otimes_{\mathcal O_{\tilde Y}}\mathcal D_{\tilde Y}$ is given by the Leibniz rule, and 
$L'_\xi$ acts via the naive right $\mathcal D$-module structure. We switch the right $\mathcal D$-module structure on
$\omega_{\tilde Y}\otimes_{\mathcal O_{\tilde Y}}\mathcal D_{\tilde Y}$ so that the right $\mathcal D$-module structure is the naive one, 
and $L_\xi$ acts via the Leibniz rule. Then $\omega_{\tilde Y}\otimes_{\mathcal O_{\tilde Y}}\mathcal D_{\tilde Y}$ is a weakly 
$H$-equivariant right $\mathcal D$-module and the condition (\ref{lieder}) holds on the cokernel of the morphism
\[\bigoplus_{\xi\in\mathfrak L(H)}\omega_{\tilde{Y}}\otimes_{\mathcal{O}_{\tilde{Y}}}\mathcal D_{\tilde{Y}} \xrightarrow{\sum L_\xi} 
\omega_{\tilde{Y}}\otimes_{\mathcal{O}_{\tilde{Y}}}\mathcal D_{\tilde{Y}}.\] 
Applying $R(fp)_{+}$, we see $f_*\omega_{Y}\otimes_{\mathcal{O}_V}\mathcal D_V$ is a weakly 
$H$-equivariant right $\mathcal D$-module and the condition (\ref{lieder}) holds on the cokernel of the morphism
	\[\bigoplus_{\xi\in\mathfrak L(H)} f_*\omega_{Y}\otimes_{\mathcal{O}_V}\mathcal D_V
	 \xrightarrow{\sum L_\xi} f_*\omega_{Y}\otimes_{\mathcal{O}_V}\mathcal D_V.\]
So $\mathcal{N}$ is $H$-equivariant.
\end{proof}

\section{Hypergeometric $\mathcal D$-modules}

\subsection{The modified hypergeometric $\mathcal D$-module}
We first works over an algebraically closed field $\overline K$ of characteristic $0$. Let $G$ be 
a reductive algebraic group over $\overline K$, let 
$\rho_j: G\to \mathrm{GL}(V_j)$ $(j=1, \ldots, N)$ be irreducible representations, 
and let $\mathbb V=\prod_{j=1}^N\mathrm{End}(V)$. 
Suppose the morphism 
$$\iota: G\to \mathbb V
=\prod_{j=1}^N\mathrm{End}(V),\quad g\mapsto (\rho_1(g), \ldots,\rho_N(g))$$ 
is quasi-finite. 
Let $X$ be the scheme theoretic image of $\iota$, let $Y$ be the integral closure of 
$X$ in $G$, and let $f: Y\to \mathbb V$ be the composite $Y\to X\to \mathbb V$. 
Set $H=G\times G$. 
The two sided action of $G$ on 
$\mathbb V$ induces an action of $H$ on $Y$. Fix a Borel subgroup $B^+$ of $G$. Let $B^-$ be opposite 
Borel subgroup. 
Then $B:=B^+\times B^-$ is a Borel subgroup of $H$. 
The $B$-orbit  $B^+B^-$ in $G\subset Y$ is the open stratum of the Bruhat decomposition of 
$G$. Thus $Y$ is a spherical variety for the reductive group $H$. 
It is a normal algebraic monoid containing the reductive
group $G$. 

Consider the hypergeometric $\mathcal D$-module 
$$\mathcal Hyp_!=\mathfrak F_{\mathcal L}(R\iota_!\omega_G)$$
introduced in \ref{definitionhypD}. 
By Proposition \ref{sphericalDmodisogen}, 
$R\iota_!\omega_{\mathbb G}$ is a direct summand of 
$$\mathcal N=(f_*\omega_Y\otimes_{\mathcal{O}_{\mathbb V}}\mathcal D_{\mathbb V}
)\big/\sum_{\xi\in\mathfrak L(H)}L_\xi (\omega_{Y}\otimes_{\mathcal{O}_{\mathbb V}}
\mathcal D_{\mathbb V}),$$ 
where the right $\mathcal D_{\mathbb V}$-module structure is induced by the naive 
right $\mathcal D_{\mathbb V}$-module on $\omega_{Y}\otimes_{\mathcal{O}_{\mathbb V}}
\mathcal D_{\mathbb V}$, and $L_\xi$ acts on $\omega_{Y}\otimes_{\mathcal{O}_{\mathbb V}}
\mathcal D_{\mathbb V}$ by the Leibniz rule. 
We define the \emph{modified hypergeometric $\mathcal D$-module}  $\mathcal M$ to be the Fourier transform of $\mathcal N$:
$$\mathcal M:=\mathfrak F_{\mathcal L}(\mathcal N).$$ Then $\mathcal Hyp_!$ is a direct summand of $\mathcal M$.
In this section, we prove $\mathcal M$ is an integrable connection on $\mathbb V^{\mathrm{gen}}$ and estimate its rank. 

\begin{remark} In \cite[5.2]{K}, Kapranov defines the $A$-hypergeometric $\mathcal D$-module to be the Fourier transform
of the left $\mathcal D_{\mathbb V}$-module 
$$\mathcal D_{\mathbb V}\Big/\Big(\sum_{\xi\in\mathfrak L(H)}\mathcal D_{\mathbb V}L_\xi +\mathcal D_{\mathbb V}\mathcal I_X\Big),$$
where $\mathcal I_X$ is the ideal sheaf of the closed immersion $X\to \mathbb V$. We don't know the relationship between the 
$A$-hypergeometric $\mathcal D$-module and the modified hypergeometric $\mathcal D$-module.
\end{remark}

\subsection{Homogenization} 
Extend the representations $\rho_j: G\to \mathrm{GL}(V_j)$ $(j=1,\ldots, N)$
to representations $$\rho'_j:\mathbb G_m\times G\to \mathrm{GL}(V_j), \quad (t, g)\mapsto t\rho_j(g).$$ Let $\rho'_0$ be the
representation $$\rho'_0:\mathbb G_m\times G\mapsto \mathrm{GL}(\overline K), \quad (t, g)\mapsto t.$$ 
The representations $\rho'_0,\rho'_1,\ldots, \rho'_N$ of $\mathbb G_m\times G$ satisfy the homogeneity condition.
Let
$$\mathbb V'=\mathbb A^1\times \mathbb V=\mathbb A^1\times \prod_{j=1}^N \mathrm{End}(V_j),$$
let $\iota'$ be the morphism
$$\iota': \mathbb G_m\times G\to \mathbb V',\quad (t, g)\mapsto (\rho'_0(t,g), \rho'_1(t,g), \ldots, \mathbb \rho'_N(t,g)),$$
and let $X$ (resp. $X'$) be the Zariski closure of the image of $\iota$ (resp. $\iota'$). We have 
$$X\cong X'\cap (\{1\}\times \mathbb V).$$
Embed $\mathbb V$ into $\mathbb P(\mathbb V')$ by 
$$\mathbb V\hookrightarrow \mathbb P(\mathbb V'), \quad v\mapsto [1:v].$$ Let 
$\overline X$ be the closure of $X$ in $\mathbb P(\mathbb V')$. We have $X=\mathbb V\cap \overline X$.
Let $$\overline X_\infty=\overline X\cap \{[0:y]\in\mathbb P(\mathbb V'): y\in \mathbb V-\{0\}\},$$
and let $C(\overline X), C(\overline X_\infty)\subset \mathbb V'$ be the cones over $\overline X$ and $\overline X_\infty$,
respectively. We have 
$$\mathrm{im}\,\iota'=\bigcup_{t\in\overline K} t(1\times\mathrm{im}\,\iota),\quad 
X'=C(\overline X)=\bigcup_{t\in\overline K} t(1\times X)\sqcup C(\overline X_\infty).$$
Since $C(\overline X_\infty)\subset 0\times\mathbb V$, we often identify $C(\overline X_\infty)$ with 
a cone in $\mathbb V$.

By assumption, $\iota:G\to \mathbb V$ is quasi-finite. So $\iota':\mathbb G_m\times G\to \mathbb V'$ 
is quasi-finite. Let $Y'$ be the integral closure of $X'$ in $\mathbb G_m\times G$. It is a spherical variety
for the group $H':=(\mathbb G_m\times G)\times (\mathbb G_m\times G)$. Let $p':\tilde Y'\to Y'$ be a proper 
equivariant morphism so that $\tilde Y'$ is a smooth toroidal spherical $H'$-variety. Let $f':Y'\to \mathbb V'$ be the composite 
$Y'\to X'\to \mathbb V'$. We have a commutative diagram
$$\begin{tikzcd}
&\mathbb G_m\times G\arrow[dl,hook]\arrow[d,hook]\arrow[drr,"\iota'"]&&\\
\tilde Y'\arrow[r,"p'"]&Y'\arrow[r]\arrow[rr,bend right,"f'",swap]&X'\arrow[r]&\mathbb V'.
\end{tikzcd}$$

\begin{lemma}\label{omega} Let $i_0, i_1$ be the closed immersion
\begin{eqnarray*}
i_0: \mathbb V\to \mathbb V'=\mathbb A^1\times\mathbb V, && x\mapsto (0,x),\\
i_1: \mathbb V\to \mathbb V'=\mathbb A^1\times\mathbb V, && x\mapsto (1,x).
\end{eqnarray*}
We have $i_1^*f'_*\omega_{Y'}\cong f_*\omega_Y$, and $i_0^*f'_*\omega_{Y'}$ is supported on 
$C(\overline X_\infty)$, where we denote the inverse image by $i_0$ of $C(\overline X_\infty)\subset 
0\times\mathbb V$ also by $C(\overline X_\infty)$. 
\end{lemma}

\begin{proof}
Taking the base change of the above diagram with respect to $i_1$,
we get a commutative diagram
$$\begin{tikzcd}
&G\arrow[dl,hook]\arrow[d,hook]\arrow[drr,"\iota'"]&&\\
\tilde Y\arrow[r,"p"]&Y \arrow[r]\arrow[rr,bend right,"f",swap]&X\arrow[r]&\mathbb V,
\end{tikzcd}$$
where by abuse of notation we denote the base changes of 
$Y'$, $\tilde Y'$ and $f'$ by $Y$, $\tilde Y$ and $f$, respectively.
Then $\tilde Y\to Y$ is proper and $Y\to X$ is finite.
We have a commutative diagram
$$\begin{tikzcd}
\mathbb G_m\times \tilde Y\arrow[r,"\mathrm{id}\times p"]\arrow[d,hook]&\mathbb G_m\times Y \arrow[r]\arrow[d,hook]
\arrow[rr,bend left,"\mathrm{id}\times f"]&
\mathbb G_m\times X\arrow[r]\arrow[d,hook]&\mathbb G_m\times\mathbb V\arrow[d,hook,"A"]\\
\tilde Y'\arrow[r,"p'"]&Y'\arrow[r]\arrow[rr,bend right,"f'",swap]&X'\arrow[r]&\mathbb V',
\end{tikzcd}$$
where the vertical arrows are open immersions induced by the
$\mathbb G_m$-action on $\mathbb V'$:
$$A(t, y)=t(1, y)=(t, ty).$$
Thus $\mathbb G_m\times \tilde Y$ is smooth and $\mathbb G_m\times Y$ is normal. It follows that
$\tilde Y$ is smooth and $Y$ is normal. So $Y$ is the integral closure of $X$ in $G$,
$\tilde Y\to Y$ is proper birational equivariant, and $\tilde Y$ is a smooth toroidal spherical $H$-variety. 
 Moreover, we have isomorphisms
 $$A^* f'_*\omega_{Y'}\cong A^*f'_*p'_*\omega_{\tilde Y'}\cong \omega_{\mathbb G_m}\boxtimes f_*p_*\omega_{\tilde Y}
 \cong \omega_{\mathbb G_m}\boxtimes f_*\omega_Y.$$
It follows that $i_1^*f'_*\omega_{Y'}\cong f_*\omega_Y$. The support of $i_0^*f'_*\omega_{Y'}$ is 
\[X'\cap (\{0\}\times \mathbb V)=C(\overline X)\cap (\{0\}\times \mathbb V)=C(\overline X_\infty).\qedhere\]
\end{proof}

Let $T$ be the maximal torus of $B$, $\Lambda=\mathrm{Hom}(T, \mathbb G_m)$ 
the weight lattice, $\lambda_j\in\Lambda$ $(j=1, \ldots, N)$  the maximal weights of $\rho_j$, and $\Delta_\infty$ the
New polytope at $\infty$, that is, the convex hull in $\Lambda_{\mathbb R}=\Lambda\otimes_{\mathbb Z}\mathbb R$
of $W(0, \lambda_1,\ldots, \lambda_n)$, where $W$ is the Weyl group. For the group $G'=\mathbb G_m\times G$, 
the weight lattice is $\Lambda'=\mathbb Z\oplus \Lambda$, and the highest weights 
of $\rho'_j$ (resp. $\rho'_0$) are $(1, \lambda_j)$ (resp. $(1,0)$). Let $\Delta'$ be the Newton polytope of $\rho'_j$ $(j=0,1\ldots, N)$, 
that is, the convex hull of the orbits of $(1,0)$ and $(1, \lambda_j)$ $(j=1,\ldots, N)$ under the Weyl group. We have 
$$\Delta':=1\times\Delta_\infty.$$ 
The Newton polytope $\Delta'_\infty$ at $\infty$ of $\rho'_j$ $(j=0,1\ldots, N)$ is 
$$\Delta'_\infty=\bigcup_{t\in [0,1]} t\Delta'=\bigcup_{t\in [0,1]} t(1\times\Delta_\infty).$$
A face of $\Delta'$ is of the form $1\times \tau$, 
where $\tau$ is a face of $\Delta_\infty$. Define $e(\tau)=(e_j(\tau))\in \mathbb V=\prod_{j=1}^N\mathrm{End}(V_j)$ 
by the formula (\ref{etau}). 
For each $j\in \{1, \ldots, N\}$, 
regarding $V_j$ as a representation of $\mathbb G_m\times G$, $\mathbb G_m\times T$ acts on $V_j(\lambda)$ via the 
character $(1,\lambda)$. We define
$$e_j(1\times \tau)|_{V_j(\lambda)}=
\begin{cases} 
\mathrm{id}_{V_j(\lambda)}&\hbox{if } \lambda\in \tau,\hbox{ or equivalently, } (1,\lambda)\in 1\times \tau\\
0&\hbox{otherwise}.
\end{cases}$$
The representation $\rho_0$ acts on $\overline K$ via the character $(1,0)$. We define 
$$e_0(1\times \tau)|_{\overline K}=
\begin{cases} 
1&\hbox{if } 0\in \tau,\\
0&\hbox{otherwise}.
\end{cases}$$
Let $e(1\times\tau)=(e_0(1\times \tau), \ldots, e_N(1\times \tau))\in \mathbb V'=\prod_{j=0}^N\mathrm{End}(V_j)$. 
We have actions
\begin{align*}
(G\times G)\times \mathbb V\to\mathbb V, &\quad ((g, h),(A_j)_{j=1,\ldots, N})\mapsto (\rho_j(g)A_j\rho_j(h^{-1}))_{j=1,\ldots, N},\\
(\mathbb G_m\times G)\times (\mathbb G_m\times G)\times \mathbb V'\to\mathbb V',&\quad ((t, g), (s,h),(A_j)_{j=0,1,\ldots, N})\mapsto  
(\rho'_j(t, g)A_j\rho'_j((s,h)^{-1}))_{j=0,1,\ldots, N}.
\end{align*}
They induce actions of $H$ on $X$ and on $\overline X$, and an action of
$H'$ on $X'=C(\overline X)$.

\begin{lemma}\label{lm:Kapranovorbit} Notation as above.
\begin{enumerate}[(i)]
\item The map 
$$1\times \tau\mapsto (\mathbb G_m\times G) e_{1\times \tau}  (\mathbb G_m\times G) $$
defines a one-to-one correspondence between 
the $W$-orbits of faces of $\Delta'=1\times\Delta_\infty$ and orbits of $(\mathbb G_m\times G) \times 
(\mathbb G_m\times G) $ on $X'=C(\overline X)$.
\item The map 
$$\tau\mapsto Ge_\tau G$$
defines a one-to-one correspondence between 
the $W$-orbits of faces of $\Delta_\infty$ containing the origin and the orbits of $G\times G$ on $X$. 
\item The map 
$$\tau\mapsto (\mathbb G_m\times G) e_{1\times \tau}  (\mathbb G_m\times G)$$
defines a one-to-one correspondence between 
the $W$-orbits of faces of $\Delta_\infty$ not containing the origin and the orbits of $(\mathbb G_m\times G) \times 
(\mathbb G_m\times G) $ on $C(\overline X_\infty)$. 
\end{enumerate}
\end{lemma}

\begin{proof} Since $\rho'_0,\rho'_1,\ldots, \rho'_N$ satisfy the homogeneity condition, 
(i) follows from Kapranov's work \cite[Theorem 2.4.2]{K}.  
We have 
$$C(\overline X)=\bigcup_{t\in\overline K} t(1\times X)\sqcup C(\overline X_\infty).$$
If $0\in\tau$, then $e_{1\times\tau}\in 1\times X$. 
If $0\not\in \tau$, then $e_{1\times \tau}\in C(\overline X_\infty)$.
Moreover, we have a one-to-one correspondence
$$O\mapsto \bigcup_{t\in\overline K} t(1\times O)$$ between $(G\times G)$-orbits on $X$ and
$(\mathbb G_m\times G)\times (\mathbb G_m\times G)$-orbits on $\bigcup_{t\in\overline K} t(1\times X)$.
So (ii)-(iii) follow from (i).
\end{proof}

Choose a linear coordinate $(x_1, \ldots, x_m)$ on the vector space $\mathbb V$. 
The affine coordinate ring of the scheme $\mathbb V$ 
(resp. $\mathbb V'$) is the polynomial ring $$A:=\overline K[x_1, \ldots, x_m]\quad(\hbox{resp. } 
C:=\overline K [x_0, x_1, \ldots, x_m]).$$  
Let $A_d$ be the subspace of $A$ consisting of polynomials of degree $\leq d$,
and let $$C(A)=\bigoplus_{d=0}^\infty A_d.$$ The property $A_d A_e\subset A_{d+e}$ implies that 
$C(A)$ is a graded $\overline K $-algebra. We have an isomorphism 
$$C(A)\stackrel\cong \to C$$
of graded $\overline K$-algebras mapping a polynomial $f(x_1,\ldots, x_n)\in A_d \subset C(A)$
to $x_0^d f\big(\frac{x_1}{x_0},\ldots, \frac{x_m}{x_0}\big)\in C$. 
For any $A$-module $M$ provided with a filtration 
$$\cdots\subset M_n\subset M_{n+1}\subset \cdots$$ such that
\begin{eqnarray}\label{eqn:filtration}
M=\bigcup_n M_n, \quad A_nM_m\subset M_{m+n}.
\end{eqnarray}
Let
$$C(M)=\bigoplus_{n=-\infty}^{\infty} M_n.$$
Then $C(M)$ is a graded $C$-module, and multiplication by $x_0$ on $C(M)$ is injective. 

\begin{lemma}\label{filmod} 
The functor $M\mapsto C(M)$ is an 
equivalence from the category of filtered $A$-modules satisfying (\ref{eqn:filtration}) to the category 
of graded $C$-modules on which multiplication by $x_0$ is injective. We have 
$$M\cong C(M)/(x_0-1)C(M),\quad \mathrm{Gr}(M)\cong C(M)/x_0C(M).$$
If $C(M)$ is finitely generated over $C$, then
$\mathrm{Gr}(M)$ is finitely generated over $\mathrm{Gr}(A)\cong A$.
\end{lemma}

\begin{proof}
Let $N=\bigoplus_n N_n$ be a graded $C$-module such that multiplication by $x_0$ is injective. Multiplication by 
$x_0$ makes $(N_n)_{n\in\mathbb Z}$ a direct system. 
Let $M=\varinjlim_{n}N_n$, and let $M_n$ be the image of $N_n$ in $M$. 
For any polynomial $f\in A$ and any $s\in M$, choose an integer $d$ so that $f\in A_d$, and choose an some element $s_n\in N_n$
whose image in $M$ in $s$. We define $fs$ to be the image of 
$$\Big(x_0^d f\Big(\frac{x_1}{x_0},\ldots, \frac{x_m}{x_0}\Big)\Big) s_n \in N_{n+d}$$
in $M$. This image is independent of the choices of the integer $d$ and the element $s_n$. So we get a filtered $A$-module
$M$. The maps
\begin{eqnarray*}
\bigoplus_{n=-\infty}^{\infty} M_n\to \bigoplus_{n=-\infty}^{\infty} M_n/M_{n-1}, && (s_n)\mapsto (s_n + M_{n-1}),\\
\bigoplus_{n=-\infty}^{\infty} M_n\to M, && (s_n)\mapsto \sum_n s_n
\end{eqnarray*}
induce isomorphisms
$$C(M)/x_0C(M)\cong \mathrm{Gr}(M),\quad
C(M)/(x_0-1)C(M)\stackrel\cong\to M.$$
Indeed, if $\sum_n s_n=0$, then $(s_n)=(1-x_0)(t_n)$, where $(t_n)\in \bigoplus_{n=-\infty}^{\infty} M_n$ is defined by 
$t_n=\sum_{k=-\infty}^n s_k$.
The last assertion follows from the facts that 
\[\mathrm{Gr}(A)\cong C/x_0C\cong A,\quad \mathrm{Gr}(M)\cong C(M)/x_0C(M).\qedhere\]
\end{proof}

\begin{corollary}\label{cor:fom} Let $\Omega=\Gamma(\mathbb V, f_*\omega_Y)$. 
We have an $H$-stable filtration 
$$\cdots\subset \Omega_n\subset  \Omega_{n+1}\subset\cdots$$
such that $$\bigcup_n \Omega_n=\Omega, \quad A_n\Omega_m\subset \Omega_{m+n},$$
and $\mathrm{Gr}(\Omega)$ is finitely generated over $\mathrm{Gr}(A)$. Let 
$\mathrm{Gr}(\Omega)^\sim$ be the sheaf on $\mathrm{Spec}\,\mathrm{Gr}(A)$ corresponding 
to the $\mathrm{Gr}(A)$-module $\mathrm{Gr}(\Omega)$. Via the isomorphism 
$\mathrm{Spec}\,\mathrm{Gr}(A)\cong \mathrm{Spec}\,C/x_0C$,
we have 
$$\mathrm{Gr}(\Omega)^\sim\cong i_0^* (f'_*\omega_{Y'}),$$ and 
$\mathrm{Gr}(\Omega)^\sim$ is supported on $C(\overline X_\infty)$.
\end{corollary}

\begin{proof} Let $\Omega'=\Gamma(\mathbb V',f'_*\omega_{Y'})$. 
By Lemma \ref{omega}, we have $i_1^*f'_*\omega_{Y'}\cong f_*\omega_Y$. 
So $\Omega'/(x_0-1)\Omega'\cong \Omega$. 
The $\mathbb G_m$-action
on $f'_*\omega_{Y'}$ put a graded $C$-module structure on $\Omega'$ so that 
$\Omega'_d$ is the subspace on which $\mathbb G_m$ acts via the character $c\mapsto c^d$.
Multiplication by $x_0$ on $\Omega'$ is injective since $\omega_{\tilde Y'}$ is flat over $\mathbb A^1=\mathrm{Spec}\,\overline K[x_0]$. 
By Lemma \ref{filmod}, $\Omega$ is endowed with a filtration such that 
$\mathrm{Gr}(\Omega)\cong \Omega'/x_0\Omega'.$ We thus have 
$\mathrm{Gr}(\Omega)^\sim\cong i_0^* (f'_*\omega_{Y'}).$ By Lemma \ref{omega}, 
$\mathrm{Gr}(\Omega)^\sim$ is supported on $C(\overline X_\infty)$.
Since $f'_*\omega_{Y'}$ is a coherent $\mathcal O_{\mathbb V'}$-module, 
$\Omega'$ is a finitely generated over $C$, and hence $\mathrm{Gr}(\Omega)$ is finitely generated over 
$\mathrm{Gr}(A)$. Taking into account of the $H$-action, the filtration on $\Omega$ is $H$-stable. 
\end{proof}

We have 
$$f_*\omega_Y\otimes_{\mathcal O_{\mathbb V}}\mathcal D_{\mathbb V}\cong
f_*\omega_Y\otimes_{\mathcal O_{\mathbb V}} \mathcal O_{\mathbb V}[\partial_{x_1},\ldots, \partial_{x_m}]
\cong f_*\omega_Y\otimes_{\overline K}\overline K[\partial_{x_1},\ldots, \partial_{x_m}].$$
Regard $f_*\omega_Y\otimes_{\mathcal O_{\mathbb V}} \mathcal D_{\mathbb V}$ as a right $\mathcal D_{\mathbb V}$-module
with the naive action. Let $\mathbb V^\vee$ be the dual space of $\mathbb V$, and let 
$(x'_1,\ldots, x'_m)$ be the coordinate on $\mathbb V^\vee$ dual to the coordinate $(x_1,\ldots, x_m)$.
The Fourier transform 
$\mathfrak F_{\mathcal L}(f_*\omega_Y\otimes_{\mathcal O_{\mathbb V}} \mathcal D_{\mathbb V})$ is a coherent
$\mathcal D_{\mathbb V^\vee}$-module which can be described as follows.  As a vector space, 
$\Gamma(\mathbb V^\vee, \mathfrak F_{\mathcal L}(f_*\omega_Y\otimes_{\mathcal O_{\mathbb V}} \mathcal D_{\mathbb V}))$
coincides with $\Omega \otimes_{\overline K} \overline K[\partial_{x_1},\ldots, \partial_{x_m}]$.  
Right multiplication by $x'_i$ (resp. $\partial_{x'_i}$) on 
$\Gamma(\mathbb V^\vee, \mathfrak F_{\mathcal L}(f_*\omega_Y\otimes_{\mathcal O_{\mathbb V}} \mathcal D_{\mathbb V}))$
is the right multiplication by 
$-\partial_{x_i}$ (resp. $x_i$) on $\Omega\otimes_{\overline K}\overline K[\partial_{x_1},\ldots, \partial_{x_m}]$.
Endow $\mathfrak F_{\mathcal L}(f_*\omega_Y\otimes_{\mathcal O_{\mathbb V}} \mathcal D_{\mathbb V})$ with the 
$H$-equivariant filtration defined by $\{\Omega_n\otimes_{\overline K}\overline K[\partial_{x_1},\ldots, \partial_{x_m}]\}$.
Denote the image of $\partial_{x'_i}$ in $\mathrm{Gr}(\mathcal D_{\mathbb V^\vee})$
by $x_i$, where $\mathcal D_{\mathbb V^\vee}$ is filtrated by the degree of differential operators. Then we have 
$$\Gamma(\mathbb V^\vee, \mathrm{Gr}(\mathcal D_{\mathbb V^\vee}))
\cong \overline K[x_1,\ldots, x_n,x'_1,\ldots, x'_n].$$

\begin{lemma}\label{finiteom} The above $H$-stable filtration is a good filtration on the right $\mathcal D_{\mathbb V^\vee}$-module
$\mathfrak F_{\mathcal L}(f_*\omega_Y\otimes_{\mathcal O_{\mathbb V}} \mathcal D_{\mathbb V})$, and we 
have isomorphisms
\begin{eqnarray}\label{eqn:isogr}
\begin{array} {rcl}
\Gamma(\mathbb V^\vee,\mathrm{Gr}(\mathfrak F_{\mathcal L}(f_*\omega_Y\otimes_{\mathcal O_{\mathbb V}} \mathcal D_{\mathbb V})))
&\cong& \mathrm{Gr}(\Omega)\otimes_{\overline K}  \overline K[x'_1,\ldots, x'_m], \\
\mathrm{Gr}(\mathfrak F_{\mathcal L}(f_*\omega_Y\otimes_{\mathcal O_{\mathbb V}} \mathcal D_{\mathbb V}))
&\cong& \pi'_* \pi^* i_0^* f'_*\omega_{Y'},
\end{array}
\end{eqnarray} where $\pi: \mathbb V\times\mathbb V^\vee\to\mathbb V$ and $\pi': \mathbb V\times\mathbb V^\vee\to\mathbb V^\vee$ 
are the projections.
 \end{lemma}

\begin{proof} We need to prove the following statements.
\begin{enumerate}[(i)]
\item Each $\Omega_n\otimes_{\overline K}\overline K[\partial_{x_1},\ldots, \partial_{x_m}]$ is 
a $\overline K[x'_1,\ldots, x'_m]$-module. 
\item $\partial_{x'_1}^{i_1}\cdots \partial_{x'_m}^{i_m}$ maps 
$\Omega_n \otimes_{\overline K}\overline K[\partial_{x_1},\ldots, \partial_{x_m}]$ to 
$\Omega_{n+i_1+\cdots+i_m}\otimes_{\overline K}\overline K[\partial_{x_1},\ldots, \partial_{x_m}]$.
\item We have the isomorphisms (\ref{eqn:isogr}), and 
$\mathrm{Gr}(\Omega\otimes_{\overline K}\overline K[\partial_{x_1},\ldots, \partial_{x_m}])$
is finitely generated over 
$\Gamma(\mathbb V^\vee,\mathrm{Gr}(\mathcal D_{\mathbb V^\vee}))\cong  \overline K[x_1,\ldots, x_n,x'_1,\ldots, x'_n]$. 
\end{enumerate}

(i) Let us show each $\Omega_n\otimes_{\overline K}\overline K[\partial_{x_1},\ldots, \partial_{x_m}]$ is 
stable under the right multiplication by $x'_i$. Indeed, we have 
\begin{eqnarray}\label{Fouromega1}
(\Omega_n\otimes_{\overline K}\overline K[\partial_{x_1},\ldots, \partial_{x_m}])x'_i&=&
(\Omega_n\otimes_{\overline K}\overline K[\partial_{x_1},\ldots, \partial_{x_m}])(-\partial_{x_i})\\
&\subset& \Omega_n\otimes_{\overline K}\overline K[\partial_{x_1},\ldots, \partial_{x_m}]. \nonumber
\end{eqnarray}

(ii) Let us show the right multiplication by $\partial_{x'_i}$ 
maps $\Omega_n\otimes_{\overline K}\overline K[\partial_{x_1},\ldots, \partial_{x_m}]$ to 
$\Omega_{n+1}\otimes_{\overline K}\overline K[\partial_{x_1},\ldots, \partial_{x_m}]$. 
We have 
\begin{eqnarray}\label{Fouromega2}
&&(\omega\otimes \partial_{x_1}^{j_1}\cdots \partial_{x_n}^{j_n})\partial_{x'_i}
= \omega\otimes \partial_{x_1}^{j_1}\cdots \partial_{x_n}^{j_n}x_i \nonumber\\
&=&\omega x_i\otimes \partial_{x_1}^{j_1}\cdots \partial_{x_n}^{j_n}
+\omega\otimes j_i \partial_{x_1}^{j_1}\cdots \partial_{x_i}^{j_i-1}\cdots \partial_{x_n}^{j_n}.
\end{eqnarray}
If $\omega\in \Omega_n$, then $\omega x_i \in \Omega_{n+1}$. So the right hand side lies in 
$\Omega_{n+1}\otimes_{\overline K}\overline K[\partial_{x_1},\ldots, \partial_{x_m}]$. 

(iii) The formulas (\ref{Fouromega1})-(\ref{Fouromega2}) show that 
via the canonical isomorphism of vector spaces
$$\mathrm{Gr}(\Omega\otimes_{\overline K}\overline K[\partial_{x_1},\ldots, \partial_{x_m}])\cong 
\mathrm{Gr}(\Omega)\otimes_{\overline K}\overline K[x'_1,\ldots, x'_m],$$ 
the multiplication by 
$x'_i$ (resp. $x_i$) on $\mathrm{Gr}(\Omega\otimes_{\overline K}\overline K[\partial_{x_1},\ldots, \partial_{x_m}])$
coincides with the multiplication by 
$-x'_i$ (resp. $x_i$) on $\mathrm{Gr}(\Omega)\otimes_{\overline K}\overline K[x'_1,\ldots, x'_m]$. 
Let $a$ be the isomorphism of rings 
$$a: \overline K[x'_1,\ldots, x'_n]\to  \overline K[x'_1,\ldots, x'_n],\quad 
 x'_i\mapsto -x'_i.$$
 We then have the following isomorphisms of modules over $\overline K[x_1,\ldots, x_m,x'_1,\ldots, x'_m]$:
 \begin{eqnarray*}
 \mathrm{Gr}(\Omega\otimes_{\overline K}\overline K[\partial_{x_1},\ldots, \partial_{x_m}])&\cong &
(\mathrm{Gr}(\Omega)\otimes_{\overline K}\overline K[x'_1,\ldots, x'_m])\otimes_{\overline K[x'_1,\ldots, x'_m],a} \overline K[x'_1,\ldots, x'_n]\\
&\cong& \mathrm{Gr}(\Omega)\otimes_{\overline K}\overline K[x'_1,\ldots, x'_m].
\end{eqnarray*}
By Corollary \ref{cor:fom}, $\mathrm{Gr}(\Omega)$ is finitely generated over $\overline K[x_1,\ldots, x_m]$.
So $\mathrm{Gr}(\Omega)\otimes_{\overline K}\overline K[x'_1,\ldots, x'_m]$ is finitely generated over
$\overline K[x_1,\ldots, x_n,x'_1,\ldots, x'_n]$. 
\end{proof}

Let $\mathfrak g$ be the Lie algebra of $G$. We have
$\mathfrak L(H)=\mathfrak g\times\mathfrak g$. For any $\xi=(\xi_1, \xi_2)\in \mathfrak L(H)$, 
recall that $L_\xi$ is the section of the $\mathcal O_{\mathbb V}$-module 
$\mathcal T_{\mathbb V}$ of tangent vectors on $\mathbb V$
whose value at $A\in\mathbb V$ is 
the tangent vector 
$$L_\xi= \frac{\mathrm d}{\mathrm dt}\Big|_{t=0} (e^{t\xi_1} Ae^{-t\xi_2}).$$
So far we view $\mathcal T_{\mathbb V}$ as a subsheaf of $\mathcal D_{\mathbb V}$
and hence view $L_\xi$ as a differential operator.  Let 
$$\pi: T^*\mathbb V\cong \mathbb V\times \mathbb V^\vee\to \mathbb V$$ 
be the projection for the cotangent bundle $T^*\mathbb V\cong \mathbb V\times \mathbb V^\vee$ of $\mathbb V$. 
Then we have 
$$\pi_*\mathcal O_{T^*\mathbb V}\cong\bigoplus_{j=0}^\infty \mathrm{Sym}^j\mathcal T_{\mathbb V}.$$
Thus we can also view $\mathcal T_{\mathbb V}$ 
as a subsheaf of $\pi_* \mathcal O_{T^*\mathbb V}$, and view $L_\xi$ as a section in 
$$\Gamma(\mathbb V, \pi_* \mathcal O_{T^*{\mathbb V}})\cong 
\Gamma(\mathbb V\times \mathbb V^\vee, \mathcal O_{T^*{\mathbb V}}).$$ So $L_\xi$ defines a function on 
$T^*\mathbb V\cong 
\mathbb V\times \mathbb V^\vee$. 
For any point $$(A, A')=((A_1, \ldots, A_N), (A'_1, \ldots, A'_N))\in \mathbb V\times\mathbb V^\vee,$$
we have 
\begin{eqnarray*}
L_\xi(A, A')&=&\frac{\mathrm d}{\mathrm dt}\Big|_{t=0} \sum_{j=1}^N\mathrm{Tr}(\rho_j(e^{t\xi_1}) A_j\rho_j(e^{-t\xi_2}) A'_j)\\
&=& \frac{\mathrm d}{\mathrm dt}\Big|_{t=0} \sum_{j=1}^N\mathrm{Tr}(A_j\rho_j(e^{-t\xi_2}) A'_j\rho_j(e^{t\xi_1}) ).
\end{eqnarray*}
We also view $L_\xi$ as the section of the $\mathcal O_{\mathbb V^\vee}$-module 
$\mathcal T_{\mathbb V^\vee}$ of tangent vectors on $\mathbb V^\vee$ (and hence a section of $\mathcal D_{\mathbb V^\vee}$)
whose value at $A'\in\mathbb V^\vee$ is 
the tangent vector 
$$L^*_\xi= \frac{\mathrm d}{\mathrm dt}\Big|_{t=0} (e^{-t\xi_2} A'e^{t\xi_1}).$$

\begin{lemma}\label{lm:GrM} Endow the modified hypergeometric $\mathcal D_{\mathbb V^\vee}$-module 
$$\mathcal M=\mathfrak F_{\mathcal L}(\mathcal N)=
\mathfrak F_{\mathcal L}\Big((\omega_Y\otimes_{\mathcal{O}_{\mathbb V}}\mathcal D_{\mathbb V}
)\big/\sum_{\xi\in\mathfrak g\times\mathfrak g}L_\xi (\omega_{Y}\otimes_{\mathcal{O}_{\mathbb V}}
\mathcal D_{\mathbb V})\Big)$$
with the good filtration induced by the good $H$-stable filtration on the right $\mathcal D_{\mathbb V^\vee}$-module
$\mathfrak F_{\mathcal L}(\omega_Y\otimes_{\mathcal O_{\mathbb V}} \mathcal D_{\mathbb V})$ in Lemma \ref{finiteom}.
Then the canonical epimoprhism
$$\mathrm{Gr}(\mathfrak F_{\mathcal L}(\omega_Y\otimes_{\mathcal O_{\mathbb V}} \mathcal D_{\mathbb V}))\twoheadrightarrow
\mathrm{Gr}(\mathcal M)$$
induces an an epimorphism 
$$\mathrm{Gr}(\mathfrak F_{\mathcal L}(\omega_Y\otimes_{\mathcal O_{\mathbb V}} \mathcal D_{\mathbb V}))
\otimes_{\pi'_*\mathcal{O}_{T^*\mathbb V^\vee}}
\Big(\pi'_*\mathcal O_{T^*{\mathbb V^\vee}}\Big/\sum_{\xi\in\mathfrak g\times \mathfrak g}
L_\xi\pi'_*\mathcal O_{T^*{\mathbb V^\vee}}\Big)\twoheadrightarrow \mathrm{Gr}(\mathcal M),$$
where we regard $L_\xi$ as a section of 
$\pi'_*\mathcal{O}_{T^*\mathbb V^\vee}\cong \bigoplus_{j=0}^\infty \mathrm{Sym}^j\mathcal T_{\mathbb V^\vee}$ 
on the left-hand side.  
\end{lemma}

\begin{proof} It suffices to show $L_\xi$ lies 
in the annihilator of $\mathrm{Gr}(\mathcal M)$, that is, 
$$(\Omega_n\otimes_{\overline K}\overline K[\partial_{x_1},\ldots, \partial_{x_m}])L^*_\xi 
\subset \Omega_n\otimes_{\overline K}\overline K[\partial_{x_1},\ldots, \partial_{x_m}].$$
Fix a basis of $V_j$ for each $j\in\{1,\ldots, N\}$, and write elements in $\mathrm{End}(V_j)$ by matrices $\big(x^{(j)}_{k_jl_j}\big)$
with respect to this basis. We thus get a linear coordinate system
$$\Big(x^{(j)}_{k_jl_j}\Big)_{j\in\{1,\ldots, N\}, \, k_j, l_j\in\{1, \ldots, \mathrm{dim}\,V_j\}}$$  
for $\mathbb V=\prod_{j=1}^N\mathrm{End}(V_j)$. Note that $\mathbb V$ is self-dual with respect to the pairing 
\begin{eqnarray*}
\mathbb V\times \mathbb V&\to& \overline K, \\
\Big(\big((x^{(1)}_{k_1l_1}),\ldots, (x^{(N)}_{k_Nl_N})\big), \big((y^{(1)}_{k_1l_1}),\ldots, (y^{(N)}_{k_Nl_N})\big)\Big)
&\mapsto& \sum_{j=1}^N \mathrm{Tr}((x^{(j)}_{k_jl_j})(y^{(j)}_{k_jl_j}))=\sum_{j=1}^N \sum_{k_j, l_j=1}^{\mathrm{dim}\,V_j} 
x^{(j)}_{k_jl_j}y^{(j)}_{l_jk_j}.
\end{eqnarray*}
So the dual coordinate system of $\Big(x^{(j)}_{k_jl_j}\Big)_{j\in\{1,\ldots, N\}, \, k_j, l_j\in\{1, \ldots, \mathrm{dim}\,V_j\}}$ is 
$\Big(x'^{(j)}_{k_jl_j}\Big)_{j\in\{1,\ldots, N\}, \, k_j, l_j\in\{1, \ldots, \mathrm{dim}\,V_j\}}$ 
with $x'^{(j)}_{k_jl_j}=x^{(j)}_{l_jk_j}$. The Fourier transform $\Gamma(\mathbb V^\vee,\mathcal M)$ of 
$\Gamma(\mathbb V, \mathcal N)$ 
is the same as $\Gamma(\mathbb V, \mathcal N)$ as a vector space, and the right multiplication by $x^{(j)}_{k_jl_j}$
(resp. $\partial_{x^{(j)}_{k_jl_j}}$) on $\Gamma(\mathbb V^\vee,\mathcal M)$ is the right multiplication by $-\partial_{x^{(j)}_{l_jk_j}}$
(resp. $x^{(j)}_{l_jk_j}$) on $\Gamma(\mathbb V, \mathcal N)$ .

For any $\xi=(\xi_1,\xi_2)\in \mathfrak g\times\mathfrak g$, let $(c^{(j)}_{1k_jl_j})$ and $(c^{(j)}_{2k_jl_j})$ be the matrices 
for $\frac{\mathrm d}{\mathrm dt}\big|_{t=0}\rho_j(e^{t\xi_1})$ and $\frac{\mathrm d}{\mathrm dt}\big|_{t=0}\rho_j(e^{t\xi_2})$, respectively.
Then $L_\xi$ considered as a tangent vector field on $\mathbb V$ is 
$$L_\xi =\sum_{j=1}^N \sum_{k_j, l_j,m_j=1}^{\mathrm{dim}\,V_j}  
\Big(c^{(j)}_{1k_j m_j} x^{(j)}_{m_jl_j}-c^{(j)}_{2m_j l_j} x^{(j)}_{k_jm_j}\Big)\partial_{x^{(j)}_{k_jl_j}},$$
and $L_\xi$ considered as a tangent vector field on $\mathbb V^\vee=\mathbb V$ is 
$$L^*_\xi =\sum_{j=1}^N \sum_{k_j, l_j,m_j=1}^{\mathrm{dim}\,V_j}  \Big(c^{(j)}_{1m_j l_j} 
x^{(j)}_{k_jm_j}-c^{(j)}_{2k_j m_j} x^{(j)}_{m_jl_j}\Big)\partial_{x^{(j)}_{k_jl_j}}.$$
For any $\omega\in \Omega_n$, we have $\omega L_\xi \in \Omega_n$ since
the filtration is $H$-stable and hence stable under the action of the Lie derivative by $L_\xi$. We have
\begin{eqnarray*}
&&\Big(\omega\otimes \prod \partial^{i^{(j)}_{p_jq_j}}_{x^{(j)}_{p_jq_j}}\Big) L^*_\xi\\
&=& \sum_{j=1}^N \sum_{k_j, l_j,m_j=1}^{\mathrm{dim}\,V_j} \Big(\omega\otimes \prod \partial^{i^{(j)}_{p_jq_j}}_{x^{(j)}_{p_jq_j}}\Big) 
\Big(-c^{(j)}_{1m_j l_j} 
\partial_{x^{(j)}_{m_jk_j}}+c^{(j)}_{2k_j m_j} \partial_{x^{(j)}_{l_jm_j}}\Big)x^{(j)}_{l_jk_j}\\
&\stackrel{(\ref{Fouromega2})}
\equiv& \sum_{j=1}^N \sum_{k_j, l_j,m_j=1}^{\mathrm{dim}\,V_j} \omega x^{(j)}_{l_jk_j}\otimes \Big(-c^{(j)}_{1m_j l_j} 
\partial_{x^{(j)}_{m_jk_j}}+c^{(j)}_{2k_j m_j} \partial_{x^{(j)}_{l_jm_j}}\Big) \prod \partial^{i^{(j)}_{p_jq_j}}_{x^{(j)}_{p_jq_j}} 
\mod \Omega_n\otimes_{\overline K}\overline K[\partial_{x'^{(j)}_{k_jl_j}}]\\
&\equiv& -\omega\otimes L_\xi \prod \partial^{i^{(j)}_{p_jq_j}}_{x^{(j)}_{p_jq_j}}
\mod  \Omega_n\otimes_{\overline K}\overline K[\partial_{x'^{(j)}_{k_jl_j}}] \\
&\equiv& -L_\xi\Big(\omega\otimes  \prod \partial^{i^{(j)}_{p_jq_j}}_{x^{(j)}_{p_jq_j}}\Big)
-\omega L_\xi \otimes  \prod \partial^{i^{(j)}_{p_jq_j}}_{x^{(j)}_{p_jq_j}}
\mod \Omega_n\otimes_{\overline K}\overline K[\partial_{x'^{(j)}_{k_jl_j}}] \\
&\equiv& -L_\xi\Big(\omega\otimes  \prod \partial^{i^{(j)}_{p_jq_j}}_{x^{(j)}_{p_jq_j}}\Big)
\mod \Omega_n\otimes_{\overline K}\overline K[\partial_{x'^{(j)}_{k_jl_j}}].
\end{eqnarray*}
But $-L_\xi\Big(\omega\otimes  \prod \partial^{i^{(j)}_{p_jq_j}}_{x^{(j)}_{p_jq_j}}\Big)$ vanishes in 
$\Gamma(\mathbb V, \mathcal N)=\Gamma(\mathbb V, (\omega_Y\otimes_{\mathcal{O}_{\mathbb V}}\mathcal D_{\mathbb V}
)\big/\sum_{\xi\in\mathfrak L(H)}L_\xi (\omega_{Y}\otimes_{\mathcal{O}_{\mathbb V}}
\mathcal D_{\mathbb V}))$. 
Thus $L_\xi$ lies in the annihilator of $\mathrm{Gr}(\mathcal M)$.
\end{proof}

Theorem \ref{thm:hypD} is a consequence of the following. 

\begin{theorem}\label{modifiedhyp} The hypergeometric $\mathcal D$-module $\mathcal Hyp_!$
and the modified hypergeometric $\mathcal D$-module 
$\mathcal M$ are holonomic, and are integrable connections on $\mathbb V^{\mathrm{gen}}$. Moreover, we have
$$\mathrm{rank}\,(\mathcal Hyp_!\big|_{\mathbb V^{\mathrm{gen}}})\leq 
\mathrm{rank}\,(\mathcal M\big|_{\mathbb V^{\mathrm{gen}}})\leq d!\int_{\Delta_\infty\cap\mathfrak C} 
\prod_{\alpha \in R^+} \frac{\lambda(H_\alpha)^2}{\rho(H_\alpha)^2}\mathrm d\lambda.$$
\end{theorem}

\begin{proof} Since $\mathcal Hyp_!$ is a direct factor fact of $\mathcal M$, it suffices to prove the assertions
for $\mathcal M$. By Proposition \ref{Nequi}, 
$\mathcal N$ is an $H$-equivariant $\mathcal D$-module with support contained in $X$. By Lemma 
\ref{lm:Kapranovorbit} (ii), $X$ has finitely many $H$-orbits. So 
$\mathcal N$ is regular and holonomic by \cite[II 5 Theorem]{Hotta}.  
Its Fourier transform $\mathcal M$ is holonomic. 
By Lemmas \ref{finiteom} and \ref{lm:GrM}, we have an epimorphism 
\begin{eqnarray}\label{dominantepi}
\pi'_*\pi^*i_0^* f'_*\omega_{Y'}\otimes_{\pi'_*\mathcal{O}_{T^*\mathbb V^\vee}}
\Big(\pi'_*\mathcal O_{T^*{\mathbb V^\vee}}\Big/\sum_{\xi\in\mathfrak g\times \mathfrak g}
L_\xi\pi'_*\mathcal O_{T^*{\mathbb V^\vee}}\Big)\twoheadrightarrow \mathrm{Gr}(\mathcal M).
\end{eqnarray}
Suppose $A'$ is a point in $\mathbb V^{\mathrm{gen}}\subset \mathbb V^\vee$. Let $f_{A'}: \mathbb V\to\mathbb A^1$ be the morphism
$$f_{A'}:\mathbb V\to\mathbb A^1,\quad A' \mapsto\sum_{j=1}^N\mathrm{Tr}(A_jA'_j),$$
and let $F_{A'}$ be the linear subspace of $\mathbb V$ defined by 
$$F_{A'}=\bigcap_{(\xi_1,\xi_2)\in\mathfrak g\times\mathfrak g}\Big\{A\in\mathbb V: 
\frac{\mathrm d}{\mathrm dt}\Big|_{t=0} 
f_{A'}(e^{t\xi_1} Ae^{-t\xi_2})=L_\xi(A, A')=0\Big\}.$$ Then each $A\in F_{A'}$ is a critical point for the function 
$f_{A'}$ restricted to the $H$-orbit $GAG$. By Lemma \ref{lm:Kapranovorbit} (iii) and the fact that $A'\in \mathbb V^{\mathrm{gen}}$, we have
$$C(\overline X_\infty)\cap F_{A'}=\{0\}.$$ 
$\pi'_*\pi^*i^*_0f'_*\omega_{Y'}\otimes_{\pi'_*\mathcal{O}_{T^*\mathbb V^\vee}}
\big(\pi'_*\mathcal O_{T^*{\mathbb V^\vee}}\big/\sum_{\xi\in\mathfrak g\times \mathfrak g}
L_\xi\pi'_*\mathcal O_{T^*{\mathbb V^\vee}}\big)$ is a coherent $\pi'_*\mathcal O_{T^*{\mathbb V^\vee}}$-module.
It defines a coherent $\mathcal O_{T^*{\mathbb V^\vee}}$-module on $T^*{\mathbb V^\vee}\cong \mathbb V\times \mathbb V^\vee$,
which we denote by the same notation. 
Let $i_{A'}$ be the closed immersion 
$$i_{A'}: \mathbb V\to \mathbb V\times 
\mathbb V^\vee,\quad A\mapsto (A, A').$$ 
We have 
\begin{eqnarray*}
i_{A'}^*\Big(\pi'_*\pi^*i_0^*f'_*\omega_{Y'}\otimes_{\pi'_*\mathcal{O}_{T^*\mathbb V^\vee}}
\Big(\pi'_*\mathcal O_{T^*{\mathbb V^\vee}}\Big/\sum_{\xi\in\mathfrak g\times \mathfrak g}
L_\xi\pi'_*\mathcal O_{T^*{\mathbb V^\vee}}\Big)\Big)\cong 
i_0^*f'_*\omega_{Y'}
\otimes_{\mathcal{O}_{\mathbb V}}\mathcal{O}_{F_{A'}},
\end{eqnarray*} which is supported at $C(\overline X_\infty)\cap F_{A'}=\{0\}$. This is true for all 
$A'\in\mathbb V^{\mathrm{gen}}\subset \mathbb V^\vee$. 
So we have 
$$(\mathbb V\times \mathbb V^{\mathrm{gen}})\cap \mathrm{supp}\, 
\Big(\pi'_*\pi^*i_0^*f'_*\omega_{Y'}\otimes_{\pi'_*\mathcal{O}_{T^*\mathbb V^\vee}}
\Big(\pi'_*\mathcal O_{T^*{\mathbb V^\vee}}\Big/\sum_{\xi\in\mathfrak g\times \mathfrak g}
L_\xi\pi'_*\mathcal O_{T^*{\mathbb V^\vee}}\Big)\Big)
=0\times \mathbb V^{\mathrm{gen}}.$$
Let $\mathrm{char}\,\mathcal M$ be the characteristic cycle of $\mathcal M$ in 
$T^*(\mathbb V^\vee)\cong \mathbb V\times \mathbb V^\vee$. 
Since we have the epimorphism (\ref{dominantepi}), $(\mathbb V\times 
\mathbb V^{\mathrm{gen}})\cap(\mathrm{char}\,\mathcal M)$ is
$0\times \mathbb V^{\mathrm{gen}}$ with multiplicity bounded above by 
\begin{eqnarray}\label{bound}
\dim_{\overline K} (i_0^*f'_*\omega_{Y'}
\otimes_{\mathcal{O}_{\mathbb V}} \mathcal{O}_{F_{A'}}). 
\end{eqnarray}	
So $\mathcal M|_{\mathbb V^{\mathrm{gen}}}$ is an integrable connection with rank bounded above by (\ref{bound}).	
In the vector space $\mathbb V'=\overline K\times\mathbb V$, we have 
\begin{eqnarray*}
X'\cap (\{0\}\times F_{A'})&=& C(\overline X_\infty)\cap (\{0\}\times F_{A'})= 0,\\
\dim_{\overline K} (i_0^*f'_*\omega_{Y'}
\otimes_{\mathcal{O}_{\mathbb V}} \mathcal{O}_{F_{A'}})&=& \dim_{\overline K} (f'_*\omega_{Y'}
\otimes_{\mathcal{O}_{\mathbb V'}} \mathcal{O}_{\{0\}\times F_{A'}})
\end{eqnarray*}
Choose a linear subspace $E$ of $\mathbb V'$ containing $0\times F_{A'}$ 
such that $$\mathrm{codim}\, E=\mathrm{dim}\, X' =d+1,\quad  X'\cap E=\{0\}.$$ 
($X'\cap E$ is conical. Then intersection is $\{0\}$ if it is zero dimensional.)
Then from the bound (\ref{bound}), we get
\begin{eqnarray}\label{2ndbound}
\mathrm{rank}(\mathcal M|_{\mathbb V^{\mathrm{gen}}})\leq \dim_{\overline K}(f'_*\omega_{Y'}
\otimes_{\mathcal{O}_{\mathbb V'}} \mathcal{O}_E).
\end{eqnarray}
Recall that for any two coherent $\mathcal O_{\mathbb V'}$-modules $\mathcal F$ and $\mathcal G$ on 
$\mathbb V'$ such that the intersection of their supports has dimension zero,  their intersection multiplicity is defined 
to be the alternating sum 
\[\chi(\mathcal F,\mathcal G)=\sum_{i}(-1)^i\mathrm{length}(\mathcal Tor_i^{\mathcal{O}_{\mathbb V'}}(\mathcal F,
\mathcal G)).\]
Let $q': Y'\to X'$ be the canonical morphism, let $r=[K(\mathbb G_m\times G): K(X')]$ be the degree of the 
extension of the function fields of $Y'$ and $X'$, and let $\eta$ be the generic point of $X'$.
Then $$(q'_*\omega_{Y'})_{\eta}\cong K(\mathbb G_m\times G)\cong \mathcal O_{X',\eta}^r.$$
Since $X'$ is affine, we may find global section 
$s_1,\ldots, s_r\in \Gamma(X', q'_*\omega_{Y'})$ so that their germs at $\eta$ form a basis 
of $(q'_*\omega_{Y'})_{\eta}$. Let $\phi:\mathcal O_{X'}^r\to q'_*\omega_{Y'}$ be the 
morphism mapping the standard basis of $\mathcal O_{X'}^r$ to $\{s_1, \ldots, s_r\}$. Then 
the dimensions of the 
supports of the sheaves  
$\mathrm{ker}\,\phi$ and $\mathrm{coker}\,\phi$ 
are strictly smaller than $\mathrm{dim}\,X'=\mathrm{codim}\,E$. By Serre's theorem \cite[V B Theorem 1 (3)]{Serre2}
on the vanishing of the intersection multiplicity, we have 
$$\chi(\mathrm{ker}\,\phi,\mathcal{O}_E)=\chi(\mathrm{coker}\,\phi,\mathcal{O}_E)=0.$$ So we have 
\begin{eqnarray}\label{deletequot}
\chi(f'_*\omega_{Y'},\mathcal{O}_E)
=r\chi(\mathcal{O}_{X'},\mathcal{O}_E).
\end{eqnarray} Here for convenience, we denote the direct images in $\mathbb V'$ of the sheaves $\mathrm{ker}\,\phi$,
$\mathrm{coker}\,\phi$, $\mathcal O_E$ and $\mathcal{O}_{X'}$ by the same symbols. Let $\overline X'$ and $\overline E$ be 
the closure of $X'$ and $E$ in $\mathbb P(\overline K\oplus \mathbb V')$, respectively. 
Then $\overline E$ is a projective space.  Using the fact that $\overline X'$ is conical, one can check that 
$$\overline X'\cap \overline E=X'\cap E=\{0\}.$$ So  
the intersection number of $\overline E$ and $\overline X'$ is given by 
$\chi(\mathcal{O}_{X'},\mathcal{O}_E)$.
This intersection number is also $\mathrm{deg}(\overline X')$. 
The Newton polytope at $\infty$ for the family of representations $\rho'_0,\rho'_1,\ldots, \rho'_N$ of $\mathbb G_m\times G$ is 
$\Delta'_\infty=\bigcup_{t\in [0,1]}t(1\times \Delta_{\infty})$. Moreover, we have
$$d=2|R^+|+\mathrm{dim}\,\Lambda_{\mathbb R}.$$
By Lemma \ref{degY} below, we have
\begin{eqnarray*}
\mathrm{deg}(\overline X')&=&\frac{(d+1)!}{r}\int_{\bigcup_{t\in [0,1]}t(1\times \Delta_{\infty}\cap\mathfrak C)}
\prod_{\alpha \in R^+} \frac{\lambda(H_\alpha)^2}{\rho(H_\alpha)^2}\mathrm dt \mathrm d\lambda\\
&=&
\frac{(d+1)!}{r}\int_0^1  t^d\mathrm d t \Big(\int_{\Delta_{\infty}\cap\mathfrak C} 
\prod_{\alpha \in R^+} \frac{\lambda(H_\alpha)^2}{\rho(H_\alpha)^2}\mathrm d\lambda\Big)\\
&=& \frac{d!}{r}\int_{\Delta_{\infty}\cap\mathfrak C} 
\prod_{\alpha \in R^+} \frac{\lambda(H_\alpha)^2}{\rho(H_\alpha)^2}\mathrm d\lambda,
\end{eqnarray*} 
So we have
\begin{eqnarray}\label{degree}
\chi(\mathcal{O}_{X'},\mathcal{O}_E)=\frac{d!}{r}\int_{\Delta_\infty\cap\mathfrak C} \prod_{\alpha \in R^+} 
\frac{\lambda(H_\alpha)^2}{\rho(H_\alpha)^2}\mathrm d\lambda.
\end{eqnarray}
By \cite[15.20]{Timashev} and \cite[5.70]{birational}, 
$\omega_{Y'}$ is Cohen-Macaulay.
By \cite[IV B Proposition 11]{Serre2},  $f'_*\omega_{Y'}$ is also Cohen-Macaulay. 
Let $\{l_1,\dots, l_d\}$ be a basis of $(\mathbb V'/E)^*$. 
Since $\mathrm{dim}\, X'\cap E=0$, $\{l_1,\dots, l_d\}$ is a regular sequence for 
$\omega_{Y'}$ by \cite[IV B Theorem 2]{Serre2}. The Koszul complex of this sequence computes 
$\mathcal Tor_i^{\mathcal{O}_{\mathbb V'}}(f'_*\omega_{Y'},\mathcal{O}_E)$.
So by \cite[IV A Proposition 2]{Serre2}, we have $$\mathcal Tor_i^{\mathcal{O}_{\mathbb V'}}
(f'_*\omega_{Y'},\mathcal{O}_E)=0 \quad 
(i\geq 1).$$ We conclude
\begin{eqnarray}\label{deletetor}
\dim_{\overline K}(f'_*\omega_{Y'}\otimes_{\mathcal{O}_{\mathbb V'}} \mathcal{O}_E)
=\chi(\omega_{Y'},\mathcal{O}_E).
\end{eqnarray}
Combining (\ref{deletequot}), (\ref{degree}) and (\ref{deletetor}) together, we get 
\begin{eqnarray*}
\dim_{\overline K} (f'_*\omega_{Y'}\otimes_{\mathcal{O}_{\mathbb V}} \mathcal{O}_E)
= d!\int_{\Delta_\infty\cap\mathfrak C} 
\prod_{\alpha \in R^+} \frac{(\lambda, \alpha)^2}{(\rho, \alpha)^2}\mathrm d\lambda.
\end{eqnarray*}
Taking into account of (\ref{2ndbound}), we get the required upper bound for the generic rank of $\mathcal M$. 
\end{proof}

\begin{lemma}\label{degY} Suppose the homogeneity condition holds for the family of representations 
$\rho_j: G\to \mathrm{GL}(V_j)$ $(j=1, \ldots, N)$. Let $X$ be the Zariski closure of 
$$\iota: G\to \mathbb V=\prod_{j=1}^N  \mathrm{End}(V_j),$$  let $r=[K(G): K(X)]$, and let $\overline X$ be the closure of 
$X$ in $\mathbb P(\overline K\oplus \mathbb V)$.
We have
	$$\deg (\overline X)=\frac{d!}{r}\int_{\Delta_{\infty}\cap\mathfrak C} 
	\prod_{\alpha \in R^+} \frac{\lambda(H_\alpha)^2}{\rho(H_\alpha)^2}\mathrm d\lambda.$$
\end{lemma}

\begin{proof}
Let $\overline Z$ be the normalization of $\overline X$,  let $\pi: \overline Z\to\overline X$ be the canonical morphism, 
and let $Z=\pi^{-1}(X)$. Then $Z$ is the normalization 
of $X$. Let $\tilde G$ be the image of the homomorphism 
$$G\to \prod_{j=1}^N\mathrm{GL}(V_j),\quad g\mapsto (\rho_1(g),\ldots, \rho_N(g)).$$
Then $Z$ is a normal affine algebraic monoid containing the reductive algebraic group $\tilde G$
as an open subgroup. We may identify the weight
lattice $\tilde \Lambda$ of $\tilde G$ with a sub-lattice of the weight lattice $\Lambda$ of $G$ 
with index $$[\Lambda:\tilde \Lambda]=r=[K(G): K(X)].$$ We may identify 
$\tilde \Lambda_{\mathbb R}$ with $\Lambda_{\mathbb R}$. 
The Lebesgue measure $\mathrm d\tilde \lambda$ on $\Lambda_R$ such that 
$\mathrm{vol}(\Lambda_{\mathbb R}/\tilde \Lambda)=1$ is identified with 
$\frac{1}{r}\mathrm d\lambda$. 

Let $H=G\times G$ act on $\overline K$ trivially. Then the action of $H$ on $\overline K\oplus \mathbb V$ 
induces actions of $H$ on $\mathbb P(\overline K\oplus \mathbb V)$ and on $\overline X$.  
Let $$D_{\infty}=\overline X\cap\{[0:v]: v\in\mathbb V-\{0\}\}.$$  
Then $D_\infty$ is $H$-stable. By the homogeneity condition, $X$ is conical, and locally the 
immersion $D_{\infty}\hookrightarrow \overline X$ 
is of the form $U\xrightarrow{0\times \mathrm{id}} \mathbb{A}^1\times U$, where $U$ is an open subset of the projectivization 
$\mathbb P(X)$ of $X$. In particular $D_{\infty}$ is a Cartier divisor and its generic point is regular in 
$\overline X$. We thus have a valuation $v_{D_{\infty}}: K(X)^*\to \mathbb Z$. 
By the homogeneity condition, the group $\mathbb G_m$ contained in the center of 
$G$ acts on $X$. For any nonzero $\mathbb{G}_m$-eigen-rational-function $f\in K(X)^*$ 
with eigencharacter 
$$\mathbb{G}_m\to \mathbb{G}_m,\quad \lambda\mapsto \lambda^n,$$ we have $v_{D_{\infty}}(f)=-n$.
Since the pullback by a birational map doesn't change the degree of a divisor (\cite[1.35 (6)]{birational}), we have 
$$\deg(\overline X)=\deg((\pi^*D_{\infty})^d).$$	
By \cite[4.1 Th\'eor\`eme]{Brion2} or \cite[Theorem 18.8]{Timashev} (which relies on the Weyl dimension formula) 
and the discussion on the 
measure $\mathrm d\tilde \lambda$ at the beginning, it suffices to show that the polytope 
$$P(\pi^*D_{\infty})=\{\lambda\in \Lambda_{\mathbb{R}}: v_{D}(\lambda)\geq 0\text{ for any }
(B^+\times B^-)\hbox{-stable divisor } D \hbox{ in } Z, \;\langle v_{D_{\infty}},\lambda\rangle\geq -1\}$$
associated to the divisor $\pi^*D_{\infty}$ is exactly $\Delta_{\infty}\cap\mathfrak C$, 
where for any $(B^+\times B^-)$-stable divisor $D$ in $Z$, 
$v_D$ is the valuation corresponding to $D$, and $v_D$ defines an element in $\mathrm{Hom}_{\mathbb Z}(\Lambda, \mathbb Z)$ by setting 
$$\langle v_D, \chi_f\rangle =v_D(f)$$ 
for any $(B^+\times B^-)$-eigen-rational-function $f$ on $Z$ with character $(\chi_f,-\chi_f)$.  
	 
Let $\mathcal C$ be the cone in $\mathrm{Hom}_{\mathbb Z}(\Lambda, \mathbb Z)\otimes_{\mathbb Z}\mathbb R$
generated by $v_D$ for all $(B^+\times B^-)$-stable divisor $D$ in $Z$.
By \cite[Theorem 27.12]{Timashev}, $\mathcal C$ is a strictly convex cone generated by all simple coroots 
$H_\alpha$ ($\alpha\in R^+$ is simple) and finitely many antidominant vectors. 
We may identify the dual convex cone 
$$\mathcal C^\vee=\{\lambda\in \Lambda_{\mathbb{R}}: \langle v_D, \lambda\rangle
\geq 0\text{ for all $(B^+\times B^-)$-stable divisor $D$ in $Z$}\}$$ of $\mathcal C$ 
with the cone spanned by $\chi_f$ as $f$ goes over the set 
$\mathcal O_{Z}(Z)^{(B^+\times B^-)}$ of $(B^+\times B^-)$-eigen-regular-functions 
on $Z$. Let
$\mathcal K$ be the cone generated by the weights of $V_j$ $(j=1, \ldots, N)$. 

\medskip
\noindent\emph{Claim}: We have 
$$\mathcal C^\vee=\mathcal K\cap \mathfrak C.$$

\medskip
Each representation 
$\tilde G\to \mathrm{GL}(V_j)$ is  
extendible to $Z$. By \cite[Proposition 27.16 (3)]{Timashev}, we have $\lambda_j\in \mathcal C^\vee$. 
So for any  vector $v$ in $\mathcal C$, we have 
$$\langle v, \lambda_j\rangle\geq 0.$$  
Any weights of $V_j$ is of the form $\lambda_j-\sum_{\alpha\in R^+} k_{j\alpha}\alpha$ for some 
$k_{j\alpha}\in \mathbb Z_{\geq 0}$. So any elements in $\mathcal K$ is of the form 
$$\sum_{j=1}^N r_j \Big(\lambda_j-\sum_{\alpha\in R^+} k_{j\alpha}\alpha\Big)$$ for some $r_j\in \mathbb R_{\geq 0}$. 
For any anti-dominant vector $v$ in $\mathcal C$, we have $\langle v,\lambda_j\rangle\geq 0$ and $\langle v,\alpha\rangle\le 0$ 
$(\alpha\in R^+)$. So we have 
$$\Big\langle v,\sum_{j=1}^N r_j\Big(\lambda_j-\sum_{\alpha\in R^+} k_{j\alpha}\alpha\Big)\Big\rangle \geq 0.$$
If $\sum_{j=1}^N r_j (\lambda_j-\sum_{\alpha\in R^+} k_{j\alpha}\alpha)$ lies in the dominant Weyl chamber, then we have 
$$\Big\langle H_\alpha, \sum_{j=1}^N r_j \Big(\lambda_j-\sum_{\alpha\in R^+} k_{j\alpha}\alpha\Big) \Big\rangle\geq 0$$ 
for all simple positive roots $\alpha$. Since $\mathcal C$ is generated by $H_\alpha$ $(\hbox{simple }\alpha\in R^+)$ and 
some anti-dominant vectors, we have $\mathcal C^\vee\supset \mathcal K\cap \mathfrak C$. 

Conversely, for any $f\in \mathcal O_Z(Z)^{(B^+\times B^-)}$, we can find an equation 
\begin{eqnarray}\label{inteqn}
f^n+a_1f^{n-1}+\cdots+a_n=0
\end{eqnarray} such that $a_i\in \mathcal O_X(X)$ are regular functions on $X$. 
The torus $T\times T$ acts on 
$\mathcal O_X(X)$ and each function 
in $\mathcal O_X(X)$ can be written as a linear combination of $(T\times T)$-eigenfuctions. Writing down the weight 
$(\chi_{f^n}, -\chi_{f^n})$-component 
of the equation (\ref{inteqn}), we conclude that there exists a positive integer $m$ such that $(m\chi_f,-m\chi_f)$ is a weight for the representation 
$\mathcal O_X(X)$.  As a $\overline K$-algebra, $\mathcal O_X(X)$
is generated by functions of the form $$X=(X_1, \ldots, X_N)\mapsto \mathrm{Tr}(A_jX_j)$$ 
with $A_j\in\mathrm{End}(V_j)$ $(j=1, \ldots, N)$. It follows that $(T\times T)$-weights for the representation 
$\mathcal O_X(X)$ are of the form $(\lambda,  -\mu)$ with $\lambda,\mu\in \mathcal K$. 
Hence $m\chi_f$  lies in $\mathcal K$. We have  $f\in \mathcal O_G(G)^{(B^+\times B^-)}$ and
$$\mathcal O_G(G)\cong \bigoplus_V \mathrm{End}(V),$$
where $V$ goes over the set of irreducible representations of $G$. So $(\chi_f, -\chi_f)$ is the weight of an element in 
$\mathrm{End}(V)^{(B^+\times B^-)}$ for some irreducible $V$. Hence $\chi_f\in \mathfrak{C}$.
This finishes the proof of the claim. 

By the homogeneity condition, the restriction of each $\rho_j$ to the subgroup $\mathbb G_m$ of $G$ 
is the scalar multiplication. So the Newton polytope $\Delta$ lies in a hyperplane of $\Lambda_{\mathbb R}$ not passing 
through the origin, and the Newton polytope $\Delta_\infty$ at $\infty$ is given by 
$$\Delta_\infty=\bigcup_{t\in [0,1]} t\Delta.$$ 
We have
$$\mathcal C^\vee=\mathcal K\cap \mathfrak C=\mathbb R_{\geq 0} \Delta\cap \mathfrak{C}.$$ 
The torus $\mathbb G_m$  in $G$ 
acts on each $V_j$ via the identity character by the homogeneity condition.
So $\langle v_{D_\infty},\lambda_j\rangle=-1$. The Weyl group $W$ acts trivially on
$\mathbb{G}_m$. So $\langle v_{D_\infty},\sigma(\lambda_j)\rangle=-1$ for any $\sigma\in W$.
We thus have $v_{D_\infty}|_\Delta=-1$. Therefore
\[P(\pi^*D_{\infty})=\{\lambda\in\mathcal C^\vee: \langle v_{D_\infty}, \lambda\rangle\geq -1\}
=\bigcup_{t\in [0,1]}t\Delta\cap \mathfrak{C}= \Delta_\infty\cap\mathfrak C.\qedhere\]
\end{proof}

\begin{proposition}\label{char0mainthm} Suppose $\overline K=\mathbb C$ and the homogeneity condition holds. 
The hypergeometric sheaves $\mathrm{Hyp}_{*,\mathbb C}$ and $\mathrm{Hyp}_{!,\mathbb C}$
are perverse sheaves on $\mathbb V$. Their restrictions to $\mathbb V^{\mathrm{gen}}$ 
come from local systems with rank $\leq d!\int_{\Delta_\infty\cap\mathfrak C} 
	\prod_{\alpha \in R^+} \frac{\lambda(H_\alpha)^2}{\rho(H_\alpha)^2}\mathrm d\lambda.$
\end{proposition}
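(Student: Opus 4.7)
The plan is to obtain everything from Proposition~\ref{modifiedhyp} by exhibiting $\mathcal{H}yp_{!,\mathbb{C}}$ as a direct summand of $\mathcal{M}$, then converting from $\mathcal{D}$-modules to constructible sheaves via the solution functor, and finally handling $\mathrm{Hyp}_{!,\mathbb{C}}$ by Verdier duality. Since $\iota_{\mathbb{C}}$ is quasi-finite and $\omega_{G_{\mathbb{C}}}$ is holonomic, Proposition~\ref{start} gives $R\iota_{\mathbb{C},!}\omega_{G_{\mathbb{C}}} = R^0\iota_{\mathbb{C},!}\omega_{G_{\mathbb{C}}}$. Taking $H=G_{\mathbb{C}}\times G_{\mathbb{C}}$ acting on the spherical variety $Y_{\mathbb{C}}$ with open orbit $G_{\mathbb{C}}$ and $V=\mathbb{V}_{\mathbb{C}}$, Proposition~\ref{sphericalDmodisogen} realizes $R\iota_{\mathbb{C},!}\omega_{G_{\mathbb{C}}}$ as a direct summand of $\mathcal{N}$. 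Applying $\mathfrak{F}_{\mathcal{L}}$, which is an exact (in fact, equivalence-type) functor and so commutes with direct summands, I conclude that $\mathcal{H}yp_{!,\mathbb{C}}$ is a direct summand of $\mathcal{M}$.

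Next, I transfer the conclusions of Proposition~\ref{modifiedhyp} through the direct-summand relation. Being regular holonomic and being a regular integrable connection on $\mathbb{V}^{\mathrm{gen}}_{\mathbb{C}}$ are both properties stable under taking direct summands, so $\mathcal{H}yp_{!,\mathbb{C}}$ is regular holonomic and its restriction to $\mathbb{V}^{\mathrm{gen}}_{\mathbb{C}}$ is a regular integrable connection whose generic rank is bounded by $d!\int_{\Delta_\infty\cap\mathfrak{C}} \prod_{\alpha\in R^+} \tfrac{(\lambda,\alpha)^2}{(\rho,\alpha)^2}\,\mathrm{d}\lambda$. I then pass to constructible sheaves using Proposition~\ref{basichyp_D}(iii), which identifies $\mathrm{Hyp}_{*,\mathbb{C}}\cong \mathrm{Sol}(\mathcal{H}yp_{!,\mathbb{C}})$. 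By the Riemann--Hilbert correspondence (together with the identity $\mathrm{Sol}=DR\circ \mathbb{D}[-\dim]$), the solution complex of a regular holonomic right $\mathcal{D}$-module is perverse, and the solution complex of an integrable connection on a smooth variety of dimension $n$ is a shifted local system of the same rank. Therefore $\mathrm{Hyp}_{*,\mathbb{C}}$ is perverse on $\mathbb{V}_{\mathbb{C}}$ and its restriction to $\mathbb{V}^{\mathrm{gen}}_{\mathbb{C}}$ comes from a local system of the stated rank bound.

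To handle $\mathrm{Hyp}_{!,\mathbb{C}}$, the cleanest route is Verdier duality. Since $G_{\mathbb{C}}$ is smooth and $\iota_{\mathbb{C}}$ is quasi-finite, one has $\mathbb{D}(R\iota_{\mathbb{C},!}\mathbb{C}[\dim G])\cong R\iota_{\mathbb{C},*}\mathbb{C}[\dim G]$ up to a Tate twist, and the Wang--Fourier transform $\mathrm{Four}_B$ for constructible sheaves intertwines with Verdier duality (up to a shift/twist by $\dim \mathbb{V}$); this may be checked directly from the definition $\mathrm{Four}_B(K)=Rp_{2!}(p_1^*K\otimes \langle\,,\,\rangle^*B)[r]$ by using proper base change, the projection formula, and the fact that $\mathbb{D}(B)\cong B$ up to shift (since $B=Ru_*\mathbb{C}$ with $u$ an open immersion of complement of a smooth divisor). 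Consequently $\mathrm{Hyp}_{!,\mathbb{C}}$ is the Verdier dual of $\mathrm{Hyp}_{*,\mathbb{C}}$ up to shift/twist. Since Verdier duality preserves perversity and sends a local system on a smooth variety to the dual local system of the same rank, the conclusions established for $\mathrm{Hyp}_{*,\mathbb{C}}$ transfer to $\mathrm{Hyp}_{!,\mathbb{C}}$ with the identical rank bound.

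The main obstacle I anticipate is the bookkeeping in the third paragraph: namely, ensuring that $\mathrm{Four}_B$ commutes with Verdier duality with the correct normalization. If this is not already available in \cite{Wang}, a safer fallback is to reprove Proposition~\ref{sphericalDmodisogen} with $R^0\iota_+$ in place of $R^0\iota_!$ (using $Rj_*\mathcal{O}_{Hy_0}$ in place of $Rj_!\mathcal{O}_{Hy_0}$ and the right $\mathcal{D}$-module $\mathcal{D}_{\mathbb{A}^1}/\mathcal{D}_{\mathbb{A}^1}\partial_x x$), thereby exhibiting $\mathcal{H}yp_{*,\mathbb{C}}$ as a direct summand of a companion $\mathcal{M}'$ and running the argument symmetrically. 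Either way, the rank bound is the same because both $\mathcal{M}$ and $\mathcal{M}'$ are Fourier transforms of monodromic regular holonomic $\mathcal{D}$-modules whose characteristic cycles are identified via the swap on $T^*\mathbb{V}_{\mathbb{C}}$, as in the proof of Proposition~\ref{modifiedhyp}.
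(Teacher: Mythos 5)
Your proposal is correct and follows essentially the same route as the paper: exhibit $\mathcal Hyp_{!,\mathbb C}$ as a direct summand of $\mathcal M$ via Propositions \ref{start} and \ref{sphericalDmodisogen}, pass to $\mathrm{Hyp}_{*,\mathbb C}=\mathrm{Sol}(\mathcal Hyp_{!,\mathbb C})$ by Proposition \ref{basichyp_D}(iii) and the Riemann--Hilbert correspondence, import the rank bound from Proposition \ref{modifiedhyp}, and handle $\mathrm{Hyp}_{!,\mathbb C}$ by duality. The paper's proof is exactly this, with the duality step left as terse as in your main text; your fallback via a companion $\mathcal M'$ is a reasonable way to make that step airtight but is not needed beyond what the paper asserts.
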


\begin{proof} Under the homogeneity condition, $\iota_!\omega_G$ is regular holonomic and monodromic. So
its Fourier transform
$\mathcal Hyp_{!,\mathbb C}$ is regular holonomic by \cite[7.24]{Brylinski}.  
By Proposition \ref{basichyp_D} (iii), $\mathrm{Hyp}_{*, \mathbb C}$ is the solution complex of 
$\mathcal Hyp_{!,\mathbb C}$. By the 
the Riemann-Hilbert correspondence, $\mathrm{Hyp}_{*,\mathbb C}$ is a perverse sheaf on $\mathbb V$.
By Proposition \ref{modifiedhyp}, $\mathcal Hyp_{!,\mathbb C}$ is an integrable connection on 
$\mathbb V^{\mathrm{gen}}$ and its generic rank is bounded by $d!\int_{\Delta_\infty\cap\mathfrak C} 
\prod_{\alpha \in R^+} \frac{\lambda(H_\alpha)^2}{\rho(H_\alpha)^2}\mathrm d\lambda$.  So 
the restriction of $\mathrm{Hyp}_{*,\mathbb C}$ to $\mathbb V^{\mathrm{gen}}$ 
comes from a local system whose rank has the same bound. By duality, 
$\mathrm{Hyp}_{!,\mathbb C}$ has similar properties. 
\end{proof}

\section{Application to exponential sums}

In the rest of this paper, we work over a Dedekind domain $R$ whose fraction field $K$ is of characteristic $0$
and whose residue fields at maximal ideals are finite. 
Let $G_R$ be a split reductive group 
$R$-scheme, and let
$\rho_{j, R}: G_R\to \mathrm{GL}(V_{j, R})$ $(j=1, \ldots, N)$ be representations such that 
$\rho_{j,\overline K}$ are irreducible. Assume the morphism 
$$\iota_R: G_R\to \mathbb V_R=\prod_{j=1}^N \mathrm{End}_R(V_{j, R}), \quad g\mapsto (\rho_{1,R}(g),\ldots, \rho_{N,R}(g))$$
is quasi-finite. 

\begin{proposition}\label{char0mainthm'} Suppose the homogeneity condition holds over $R$.
The restrictions $\mathrm{Hyp}_{*,R}|_{\mathbb V_K}$ and 
$\mathrm{Hyp}_{!,R}|_{\mathbb V_K}$ to $\mathbb V_K:=\mathbb V\otimes_RK$
of the $\ell$-adic hypergeometric sheaves over $R$
are perverse sheaves on $\mathbb V_K$. Their restrictions to $\mathbb V_K^{\mathrm{gen}}$ 
come from lisse sheaves with ranks $\leq d!\int_{\Delta_\infty\cap\mathfrak C} 
\prod_{\alpha \in R^+} \frac{\lambda(H_\alpha)^2}{\rho(H_\alpha)^2}\mathrm d\lambda.$
\end{proposition}

\begin{proof} Choose an algebraic closed field $F$ containing both $\overline K$ and $\mathbb C$. 
Choose a complex reductive group $G_{\mathbb C}$ and representations 
$\rho_{j,\mathbb C}: G_{\mathbb C}\to \mathrm{GL}(V_{j, \mathbb C})$ $(j=1, \ldots, N)$ so that 
their base changes to $F$ coincide with the base changes of $\rho_{j,\overline K}: 
G_{\overline K}\to \mathrm{GL}(V_{j, \overline K})$ to $F$. Let $\iota_{\mathbb C}$ be the morphism 
$$\iota_{\mathbb C}:G_{\mathbb C}\to \mathbb V_{\mathbb C}:=\prod_{j=1}^NV_{j,\mathbb C},\quad 
g\mapsto (\rho_{1,\mathbb C}(g), \ldots, \rho_{N, \mathbb C}(g)).$$
Define the $\ell$-adic hypergeometric sheaves
$\mathrm{Hyp}^{(\ell)}_{*,\mathbb C}$ and 
$\mathrm{Hyp}^{(\ell)}_{!,\mathbb C}$ on $\mathbb V_{\mathbb C}:=\prod_{j=1}^NV_{j,\mathbb C}$ by 
$$\mathrm{Hyp}^{(\ell)}_{*,\mathbb C}=\mathrm{Four}_B(R\iota_{\mathbb C,*}\overline{\mathbb Q}_\ell[d]),
\quad \mathrm{Hyp}^{(\ell)}_{!,\mathbb C}=\mathrm{Four}_B(R\iota_{\mathbb C,!}\overline{\mathbb Q}_\ell[d]).$$
Since the Wang-Fourier transform commutes with base change, the base change of 
$\mathrm{Hyp}_{*,R}|_{\mathbb V_K}$ (resp. $\mathrm{Hyp}_{!,R}|_{\mathbb V_K}$) 
to $\mathbb V_K\otimes_KF$ coincides with the base change of 
$\mathrm{Hyp}^{(\ell)}_{*,\mathbb C}$ (resp. $\mathrm{Hyp}^{(\ell)}_{!,\mathbb C}$) 
to $\mathbb V_{\mathbb C}\otimes_{\mathbb C}F$. To prove our assertion, it suffices 
to prove the same holds for $\mathrm{Hyp}^{(\ell)}_{*,\mathbb C}$ and 
$\mathrm{Hyp}^{(\ell)}_{!,\mathbb C}$.

For any $\mathbb{C}$-scheme $X$ of finite type, 
we have a morphism of sites $\epsilon: X(\mathbb C)\to X_{\mathrm{et}}$, 
where $X(\mathbb C)$ is defined by the usual topology on the complex variety $X(\mathbb C)$. 
By \cite[6.1.2(C$'$)]{BBD}, $\epsilon^*$ commutes with Grothendieck's six functors. 
So $$\epsilon^*(\mathrm{Hyp}^{(\ell)}_{*,\mathbb C})\cong \mathrm{Hyp}_{*,\mathbb C},\quad
\epsilon^*(\mathrm{Hyp}^{(\ell)}_{!,\mathbb C})\cong \mathrm{Hyp}_{!,\mathbb C}.$$
(Here we use the fact that $\overline{\mathbb Q}_\ell\cong\mathbb C$ as fields). 
By Proposition \ref{char0mainthm}, $\mathrm{Hyp}_{*,\mathbb C}$ and $\mathrm{Hyp}_{!,\mathbb C}$ are perverse. 
So are $\mathrm{Hyp}^{(\ell)}_{*,\mathbb C}$ and 
$\mathrm{Hyp}^{(\ell)}_{!,\mathbb C}$ . 
To prove their restrictions to $\mathbb V_{\mathbb C}^{\mathrm{gen}}$ come from local systems,
we may apply Proposition \ref{perverseranklisse} in the Appendix and use the fact that $\epsilon^*$ preserves stalks.
\end{proof}

\begin{proposition}\label{mainprop} Suppose the homogeneity condition holds over $R$. Let 
$$U=\mathbb V_R-\overline {\mathbb V_K^{\mathrm{degen}}}$$ be the complement of the 
Zariski closure of $\mathbb V_K^{\mathrm{degen}}$ in $\mathbb V_R$.
There exists a finite set $S$ of maximal ideals of $R$ so that the restriction of $\mathrm{Hyp}_{R, !}$ 
to $U\times_{\mathrm{Spec}\, R}(\mathrm{Spec}\, R-S)$ is of the form $\mathcal H[\mathrm{dim}\,\mathbb V_K]$ for a lisse 
sheaf $\mathcal H$ with rank
$\leq d!\int_{\Delta_\infty\cap\mathfrak C} 
\prod_{\alpha \in R^+} 
\frac{(\lambda, \alpha)^2}{(\rho, \alpha)^2}\mathrm d\lambda.$
\end{proposition}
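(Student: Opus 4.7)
I use Corollary~\ref{char0mainthm'} to establish the desired structure over the generic point of $\mathrm{Spec}\,R$: the restriction $\mathrm{Hyp}_{R,!}|_{\mathbb V_{\mathbb C}}$ is perverse and comes from a lisse sheaf of rank at most $r_0:=d!\int_{\Delta_\infty\cap\mathfrak C}\prod_{\alpha\in R^+}(\lambda,\alpha)^2/(\rho,\alpha)^2\,\mathrm d\lambda$ after restriction to $\mathbb V_{\mathbb C}^{\mathrm{gen}}$. Since $\mathrm{Hyp}_{R,!}|_{\mathbb V_{\mathbb C}}$ is the base change of $\mathrm{Hyp}_{R,!}|_{\mathbb V_K}$ along the faithfully flat morphism $\mathrm{Spec}\,\mathbb C\to\mathrm{Spec}\,K$ (using Proposition~\ref{WangDeligne}(i) for the base-change compatibility of $\mathrm{Four}_B$), and since lisseness and rank descend faithfully flatly, one obtains a lisse sheaf $\mathcal H_K$ on $U\times_R\mathrm{Spec}\,K=\mathbb V_K^{\mathrm{gen}}$ of rank $\le r_0$ with $\mathrm{Hyp}_{R,!}|_{\mathbb V_K^{\mathrm{gen}}}\cong\mathcal H_K[n]$, where $n:=\mathrm{dim}\,\mathbb V_K$.

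Next, I spread this out to almost all closed fibers. The complex $\mathrm{Hyp}_{R,!}$ lies in $D^b_c(\mathbb V_R,\overline{\mathbb Q}_\ell)$, so each cohomology sheaf $\mathcal H^i\mathrm{Hyp}_{R,!}$ is constructible on $\mathbb V_R$, with only finitely many nonzero. For each $i\ne -n$, the closed support $Z_i$ of $\mathcal H^i\mathrm{Hyp}_{R,!}$ is disjoint from $\mathbb V_K^{\mathrm{gen}}$ by the first step, so $Z_i\cap U$ is closed in $U$ with empty fiber over the generic point of $\mathrm{Spec}\,R$. Its image in $\mathrm{Spec}\,R$ is constructible (by Chevalley) and avoids the generic point, hence is contained in a finite set of closed points since $\mathrm{Spec}\,R$ is integral of dimension $\le 1$. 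The analogous argument applied to the complement in $U$ of the open locus of lisseness of $\mathcal H^{-n}\mathrm{Hyp}_{R,!}$ furnishes another finite set of closed points. Let $S$ be the (finite) union of all these closed points.

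On the open subscheme $U':=U\times_R(\mathrm{Spec}\,R-S)$ we then have $\mathcal H^i\mathrm{Hyp}_{R,!}|_{U'}=0$ for $i\ne -n$ and $\mathcal H:=\mathcal H^{-n}\mathrm{Hyp}_{R,!}|_{U'}$ lisse, so that $\mathrm{Hyp}_{R,!}|_{U'}\cong\mathcal H[n]$. Since $\mathbb V_R$ is a vector bundle over the integral base $\mathrm{Spec}\,R$, it is irreducible; its nonempty open subscheme $U'$ is therefore connected, so the rank of the lisse $\overline{\mathbb Q}_\ell$-sheaf $\mathcal H$ is constant and equals the generic rank $\mathrm{rank}\,\mathcal H_K\le r_0$.

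The main obstacle I foresee is the second paragraph: verifying that the locus where a constructible $\overline{\mathbb Q}_\ell$-sheaf is lisse is Zariski open in $\mathbb V_R$, and that the image in $\mathrm{Spec}\,R$ of a constructible subset omitting the generic point is a finite set of closed points. Both depend on $R$ being a regular noetherian one-dimensional domain so that the Finitude results of \cite{SGA41/2} invoked earlier in the paper apply to $D^b_c(\mathbb V_R,\overline{\mathbb Q}_\ell)$.
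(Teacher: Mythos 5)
Your first paragraph (descent of Corollary \ref{char0mainthm'} from $\mathbb C$ to $K$) and your treatment of the supports of $\mathcal H^i$ for $i\neq -n$ are fine and parallel to what the paper does. The gap is in the step you describe as ``the analogous argument applied to the complement in $U$ of the open locus of lisseness of $\mathcal H^{-n}$.'' For the supports $Z_i$ ($i\neq -n$), disjointness from the generic fibre $\mathbb V_K^{\mathrm{gen}}$ does follow from Corollary \ref{char0mainthm'}, because the support is detected by stalks, which only see the fibre. For the non-lisse locus this is not so: Corollary \ref{char0mainthm'} tells you that the restriction of $\mathcal H^{-n}$ to the subscheme $\mathbb V_K^{\mathrm{gen}}$ is lisse, but the generic fibre is neither open nor locally closed in $U$ (every neighbourhood in $U$ of a point of $\mathbb V_K^{\mathrm{gen}}$ meets infinitely many closed fibres), and lisseness of the restriction to the fibre does not by itself imply that $\mathcal H^{-n}$ is locally constant on an open neighbourhood in $U$ of each such point. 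That is exactly what you need in order to say that the closed non-lisse locus avoids the generic fibre; if it met the generic fibre, its image in $\mathrm{Spec}\,R$ would contain a dense open and no finite $S$ would work. So the real obstacle is not the one you flag (openness of the lisse locus is tautological, and finiteness of a constructible image omitting the generic point is routine), but the spreading out of lisseness from the generic fibre to an honest neighbourhood --- a genuine limit/specialization statement, made delicate by the $\overline{\mathbb Q}_\ell$-coefficients.

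This is precisely the point the paper's proof is organized around. It proves in the Appendix (Proposition \ref{Qllisse}) that on an integral normal noetherian scheme a $\overline{\mathbb Q}_\ell$-sheaf is lisse if and only if all specialization maps to the generic geometric point are isomorphisms; it then fixes a stratification of $U$ adapted to all the $\mathcal H^i$, takes $S$ to be the set of maximal ideals that arise as images of generic points of strata, and checks that for any point over $\mathrm{Spec}\,R-S$ the specialization map to $\bar\eta$ factors through the geometric generic point of a stratum which is forced to lie in $\mathbb V_K^{\mathrm{gen}}$, where Corollary \ref{char0mainthm'} applies. Your argument can be repaired by substituting this specialization argument, or an equivalent limit argument showing that a constructible $\overline{\mathbb Q}_\ell$-sheaf whose restriction to the generic fibre is lisse becomes lisse over $\mathrm{Spec}\,R[1/f]$ for some $f$; but as written the decisive step is assumed rather than proved.
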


\begin{proof}
Let $\mathcal H^i=\mathcal H^i(\mathrm{Hyp}_{!,R})$, and let $\eta$ be the generic point of $\mathbb V^{\mathrm {gen}}_{K}$. 
Choose a sequence 
$$\emptyset=Z_0\subset Z_1\subset Z_2\subset\cdots \subset Z_s=U$$ of closed subsets in $U$ so that $\mathcal H^i|_{Z_j-Z_{j-1}}$ are 
lisse for all $i, j$. Let $\eta_{j1},\ldots, \eta_{jm_j}$ be the generic points of the irreducible components of $Z_j-Z_{j-1}$,
and let $S$ be the finite set of maximal ideals which are images of some $\eta_{jl}$. Suppose $x_1$ is a point in $U$ such that 
its image in $\mathrm{Spec}\,R$ lies outside $S$. We claim that a specialization morphism 
$$\mathcal H^i_{\bar x_1}\to \mathcal H^i_{\bar \eta}$$ is an isomorphism. 
By Proposition \ref{Qllisse} in the Appendix, this implies that each 
$\mathcal H^i|_{U\times_{\mathrm{Spec}\, R}(\mathrm{Spec}\, R-S)}$ is a lisse sheaf. 
By Proposition \ref{char0mainthm'}, $\mathcal H^i|_{U\times_{\mathrm{Spec}\, R}(\mathrm{Spec}\, R-S)}$ vanishes for 
$i\not=-\mathrm{dim}\,\mathbb V_K$. Let $\mathcal H= \mathcal H^{-\mathrm{dim}\,\mathbb V_K}|_{U\times_{\mathrm{Spec}\, R}(\mathrm{Spec}\, R-S)}$. 
Then $\mathrm{Hyp}_{!, R}|_{U\times_{\mathrm{Spec}\, R}(\mathrm{Spec}\, R-S)}\cong 
\mathcal H[\mathrm{dim}\,\mathbb V_K]|_{U\times_{\mathrm{Spec}\, R}(\mathrm{Spec}\, R-S)}$  and the rank of $\mathcal H$ 
has the required bounded by Proposition \ref{char0mainthm'}.
	
Choose $\eta_{j_1l_1}$ so that $x_1\in (Z_{j_1}-Z_{j_1-1})\cap \overline{\{\eta_{j_1l_1}\}}$. Let 
$\pi: \mathbb V_R\to \mathrm{Spec}\,R$ be the structure morphism. 
If $\pi(\eta_{j_1l_1})$ is a maximal ideal $\mathfrak m$ of $R$, then 
$\mathfrak m\in S$ by the definition of $S$, and $\pi(x_1)=\mathfrak m$ since $\pi(x_1)\in \overline{\{\pi(\eta_{j_1l_1})\}}$. 
This contradicts to the fact that $x_1$ lies over $\mathrm{Spec}\, R-S$. Thus $\pi(\eta_{j_1l_1})$
must be the generic point of $\mathrm{Spec}\,R$. So $$\eta_{j_1l_1}\in U\cap  \mathbb V_K= \mathbb V^{\mathrm {gen}}_{K}.$$ 
By Corollary \ref{char0mainthm'}, 
$\mathcal H^i|_{\mathbb V^{\mathrm {gen}}_{K}}$ is lisse. So each specialization morphism
$$\mathcal H^i_{\bar \eta_{j_1l_1}}\to \mathcal H^i_{\bar \eta}$$
is an isomorphism. Each specialization morphism 
$$\mathcal H^i_{\bar x_1}\to \mathcal H^i_{\bar \eta_{j_1l_1}}$$
is also an isomorphism since $\mathcal H^i$ is lisse on $Z_{j_1}-Z_{j_1-1}$.  This proves our claim. 
\end{proof}

In the rest of this paper, we do not assume the homogeneity condition condition holds 
for the data $(G_R, \rho_{1,R}, \ldots, \rho_{N,R})$. Consider the group scheme $\mathbb{G}_{m,R}\times_R G_R$ and 
the representations $$\rho'_{j,R}: \mathbb{G}_{m,R}\times_R G_R\to\mathrm{GL}(V_{j,R}), \quad (t,g)\mapsto t\rho_j(g)
\quad (j=1, \ldots, N)$$ 
together with the representation $$\rho'_{0,R}: \mathbb{G}_{m,R}\times_R G_R\to \mathrm{GL}(V_{0,R}),\quad (t,g)\mapsto t$$
with $V_{0,R}=R$. 
The tuple $$(\mathbb{G}_{m,R}\times_R G_R,\rho'_{0,R}, \rho'_{1,R},\ldots, \rho'_{N,R})$$ satisfies the homogeneity condition. 
Let $$\mathbb V'_R:=\prod_{j=0}^N\mathrm{End}(V_{j, R})
\cong  \mathbb{A}^1_{R}\times_R \mathbb{V}_R,$$ let $\iota'_R$ be the morphism
$$\iota'_R: \mathbb{G}_{m,R}\times_R G_R\to \prod_{j=0}^N\mathrm{End}(V_{j, R})
\cong  \mathbb{A}^1_{R}\times_R \mathbb{V}_R,\quad (t, g)\mapsto (t, t\rho_1(g), \ldots, t\rho_N(g)),$$
and let $$\mathrm{Hyp}'_{!,R}=\mathrm{Four}_B(R\iota'_{R, !}\overline{\mathbb Q}_\ell[d+1])$$ be the corresponding hypergeometric sheaf
over $R$. Let $\mathfrak m$ be a maximal ideal of $R$, let $k=R/\mathfrak m$, let $\iota'_k$ be the base change of $\iota'_R$, and let 
$$\mathrm{Hyp}'_{!,k}=\mathfrak F_{\psi, \mathbb V'_k}(R\iota'_{k, !}\overline{\mathbb Q}_\ell)[d+1],$$
where $\mathfrak F_{\psi, \mathbb V'_k}$ is the Deligne-Fourier transform for the vector bundle
$$\mathbb V'_k=\prod_{j=0}^N\mathrm{End}(V_{j, k})
\cong  \mathbb{A}^1_{k}\times_k \mathbb{V}_k\to \mathrm{Spec}\,k.$$ Finally let $\mathrm{Hyp}_{!,k}$ be 
the $\ell$-adic hypergeometric sheaf for the 
representations $\rho_{j, k}:G_k\to\mathrm{GL}(V_{j,k})$ obtained from $\rho_{j, R}$ modulo $\mathfrak m$.

\begin{proposition}\label{homo_nonhomo} Let $i_0$ and $i_{-1}$ be the closed immersions
\begin{eqnarray*}
i_0: \mathbb V_k\to \mathbb V'_k=\mathbb A^1_k\times_k \mathbb V_k, && x\mapsto (0,x),\\
i_{-1}: \mathbb V_k\to \mathbb V'_k=\mathbb A^1_k\times_k \mathbb V_k, && x\mapsto (-1,x),
\end{eqnarray*}
We have
$$
i^*_{-1}\mathfrak F_{\psi, \mathbb{A}^1_{\mathbb{V}_k}}(\mathrm{Hyp}'_{!,k})
\cong \mathrm{Hyp}_{!,k}[1](-1),\quad 
i_0^*\mathfrak F_{\psi, \mathbb{A}^1_{\mathbb{V}_k}}(\mathrm{Hyp}'_{!,k})|_{\{0\}\times \mathbb{V}_k}
\cong 0.
$$
where $\mathfrak F_{\psi, \mathbb{A}^1_{\mathbb{V}_k}}$ denotes the Deligne-Fourier transform for the line bundle 
$\mathbb V'_k\cong \mathbb{A}^1_{k} \times_k \mathbb{V}_k\to \mathbb{V}_k$.
\end{proposition}

\begin{proof} 
Let $\mathfrak F_{\psi, \mathbb V'_k/\mathbb A^1_k}$ be the Deligne-Fourier transform for the vector bundle 
$$\mathbb V'_k=\mathbb A^1_k\times_k\mathbb V_k\to\mathbb A^1_k.$$
One can check
$$\mathfrak F_{\psi, \mathbb V'_k}\cong \mathfrak F_{\psi, \mathbb{A}^1_{\mathbb{V}_k}}\circ 
\mathfrak F_{\psi, \mathbb V'_k/\mathbb A^1_k}.$$
So we have 
\begin{eqnarray*}
\mathrm{Hyp}'_{!,k}
=\mathfrak F_{\psi, \mathbb V'_k}(R\iota'_{k, !}\overline{\mathbb Q}_\ell)[d+1]
\cong  \mathfrak F_{\psi, \mathbb{A}^1_{\mathbb{V}_k}}
\mathfrak F_{\psi,\mathbb V'_k/\mathbb A^1_k}(R\iota'_{k, !}\overline{\mathbb Q}_\ell)[d+1].
\end{eqnarray*}
By \cite[1.2.2.1]{L}, we have
$$\mathfrak F_{\psi, \mathbb{A}^1_{\mathbb V_k}}(\mathrm{Hyp}'_{!,k})
\cong a_*\mathfrak F_{\psi, \mathbb{V}'_k/\mathbb{A}^1_{k}}(R\iota'_{k, !}\overline{\mathbb Q}_\ell)[d+1](-1),$$
where $a$ is the morphism $$a:  \mathbb{A}^1_{k}\times_k \mathbb{V}\to  \mathbb{A}^1_{k}\times_k\mathbb{V},\quad (t,v)\mapsto (-t,v).$$ 
Since the Deligne-Fourier transform commutes with the base change, and 
$$i_1^*R\iota'_{k, !}\overline{\mathbb Q}_\ell\cong R\iota_{k, !}\overline{\mathbb Q}_\ell,
\quad i_0^*R\iota'_{k, !}\overline{\mathbb Q}_\ell= 0,$$ we have
\begin{align*}
i^*_1\mathfrak F_{\psi, \mathbb{V}'_k/\mathbb{A}^1_{k}}(R\iota'_{k, !}\overline{\mathbb Q}_\ell)\cong\;&
\mathfrak F_{\psi, \mathbb{V}_k}(R\iota_{k, !}\overline{\mathbb Q}_\ell)= \mathrm{Hyp}_{!,k}[-d],\\
i_0^*\mathfrak F_{\psi, \mathbb{V}'_k/\mathbb{A}^1_{k}}(R\iota'_{k, !}\overline{\mathbb Q}_\ell)\cong\:&
\mathfrak F_{\psi, \mathbb{V}_k}(0)=0.
\end{align*}
Our assertion follows.
\end{proof} 

\begin{lemma}\label{finiteoverbase}
Let $p_2: \mathbb{A}^1_R\times_R\mathbb{V}_{R}\to \mathbb{V}_{R}$ be the projection,
let $\mathbb V'^{\mathrm{degen}}_K$ (resp. $\mathbb V^{\mathrm{degen}}_K$) be the closed subset of $\mathbb V'_K$
(resp. $\mathbb V_K$) consisting of those points $(a, A)$ (resp. $A$) so that the Laurent polynomial 
$$(t, g)\mapsto at+t \sum_{j=0}^N \mathrm{Tr}(A\rho_{j,K}(g))
\quad(\hbox{resp. }g\mapsto \sum_{j=0}^N \mathrm{Tr}(A\rho_{j,K}(g)))$$ 
on $\mathbb G_{m, K}\times_KG_K$ (resp. $G_K$) is degenerate, and let 
$$U'= \mathbb V'_R- \overline{\mathbb V'^{\mathrm{degen}}_K}\quad (\hbox{resp. }
U= \mathbb V_R- \overline{\mathbb V^{\mathrm{degen}}_K})$$ be the complement of the Zariski 
closure of $\mathbb V'^{\mathrm{degen}}_K$ (resp. $\mathbb V^{\mathrm{degen}}_K$) in $\mathbb V'_R$ (resp. $\mathbb V_R$).
Then after replacing $\mathrm{Spec}\,R$ by a dense open subset, we may find a chain 
$$\emptyset=F_m\subset\cdots \subset F_1\subset F_{0}=U$$
of closed subsets in $U$ so that each $F_i-F_{i+1}$ is smooth over $\mathrm{Spec}\,R$, 
and $$(\mathbb{V}'_{R}-U')\cap p_2^{-1}(F_i-F_{i+1})$$ is finite over $F_i-F_{i+1}$.
\end{lemma}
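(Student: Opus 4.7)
The approach is: first compute the fiber of $p_2: \overline{\mathbb V'^{\mathrm{degen}}_K} \to \mathbb V_R$ over a geometric point of $U$ and show it is finite, then use Noetherian induction together with generic flatness and smoothness to construct the filtration (after shrinking $\mathrm{Spec}\, R$).

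The central calculation identifies the faces of $\Delta'_\infty$ not containing the origin with $\tau' = \{1\} \times \tau$ for $\tau$ a face of $\Delta_\infty$. Exploiting the homogeneity built into the extended family $\rho'_{0,R}, \ldots, \rho'_{N,R}$, one verifies directly that
$$F_{\tau'}((t_1, g_1), (t_2, g_2), (a, A)) = t_1 t_2 \bigl( \epsilon_\tau\, a + f_{\tau, A}(g_1, g_2) \bigr),$$
where $\epsilon_\tau = 1$ if $0 \in \tau$ and $\epsilon_\tau = 0$ otherwise, and $f_{\tau, A}$ is as in (\ref{ftauA}). Since $t_i \in \mathbb G_m$, the equations $d_{(t_1, g_1, t_2, g_2)} F_{\tau'} = 0$ reduce to $(g_1, g_2) \in \mathrm{Crit}(f_{\tau, A})$ together with the single constraint $\epsilon_\tau\, a + f_{\tau, A}(g_1, g_2) = 0$. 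For $\bar A \in U_{\bar K}$ and $0 \notin \tau$, the nondegeneracy of $\bar A$ kills the critical locus, producing no contribution. For $0 \in \tau$, the contribution to the $a$-fiber is $\{-f_{\tau, A}(g_1, g_2) : (g_1, g_2) \in \mathrm{Crit}(f_{\tau, A})\}$, a finite set because $f_{\tau, A}$ is constant on each of the finitely many irreducible components of its critical set (where $df_{\tau, A}$ vanishes). Taking the union over all such $\tau$ shows that $\mathbb V'^{\mathrm{degen}}_K \cap p_2^{-1}(\bar A)$ is finite.

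Since $\mathbb V'^{\mathrm{degen}}_K$ is already closed in $\mathbb V'_K$, this proves that over the generic fiber of $R$ the projection $p_2: \overline{\mathbb V'^{\mathrm{degen}}_K} \to \mathbb V_R$ has finite fibers above $U_K$. Applying Chevalley and the standard argument that a closed subset of $\mathbb A^1_{V_0}$ with finite fibers is cut out (on a dense open) by a monic polynomial in the $\mathbb A^1$-coordinate, I obtain a dense open $V_0 \subset U_K$ over which $p_2$ is a finite morphism. Shrinking $V_0$ to a smooth subscheme (using generic smoothness in characteristic zero) and spreading out over a dense open of $\mathrm{Spec}\, R$ yields a smooth stratum. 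Setting $F_1 = U - V_0$ and iterating on each irreducible component of $F_1$ (the same fiber analysis still applies), Noetherian induction on dimension terminates after finitely many steps and produces the required filtration $\emptyset = F_m \subset \cdots \subset F_0 = U$.

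The hard part will be the finiteness of the critical-value set for faces $\tau$ containing the origin: the critical set $\mathrm{Crit}(f_{\tau, A})$ is typically positive-dimensional (for $\tau = \Delta_\infty$, $f_{\tau, A}(g_1, g_2) = f(g_1 g_2)$ gives $\mathrm{Crit}(f_{\tau, A}) = \mu^{-1}(\mathrm{Crit}(f))$ of dimension $d$), yet the image under $f_{\tau, A}$ must be finite. Justifying this uniformly across all faces and making the resulting stratification simultaneously smooth over $\mathrm{Spec}\, R$ and compatible with the closure operation as we spread out from $K$ to $R$ is the delicate point.
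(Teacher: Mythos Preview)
Your proposal is correct and follows essentially the same route as the paper: identify faces of $\Delta'_\infty$ with $\{1\}\times\tau$, compute $F_{\tau'}=t_1t_2(\epsilon_\tau a+f_{\tau,A})$, conclude that the fiber over a nondegenerate $A$ consists of the critical values of the $f_{\tau,A}$ for $0\in\tau$, and then stratify by Noetherian induction and spread out. The only cosmetic difference is that the paper dispatches finiteness of the critical-value set by citing generic smoothness (\cite[III~10.6]{Hartshorne}), whereas you argue directly that $f_{\tau,A}$ is constant on each irreducible component of its critical locus; both are valid, and your worry about this step being ``the hard part'' is unfounded.
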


\begin{proof} 
We claim that the projection $p_2: \mathbb V'_K\to \mathbb V_K$ induces a quasi-finite morphism 
$$\mathbb V'^{\mathrm{degen}}_K\cap p_2^{-1}(\mathbb V^{\mathrm{gen}}_K)
\to \mathbb V^{\mathrm{gen}}_K.$$
It suffices to show that 
that for any face $\tau'$ of 
$\Delta'_\infty=\bigcup_{t\in [0,1]} t(1\times \Delta_{\infty})$ not containing the origin 
and any $A\in  \mathbb{V}_{K}^{\mathrm{gen}}(\overline K)$, the subset
$$Z_{\tau',A}=\{a\in \mathbb{A}^1({\overline K}): f_{\tau', (a, A)} \text{ has critical points}\}$$
is a finite set, where $f_{\tau', (a, A)}$ is (\ref{ftauA}) for the family of representations 
$\rho'_{0,K},\rho'_{1,K}\ldots, \rho'_{N,K}$
of $\mathbb G_{m, K}\times_K G_K$. We have $\tau'= \{1\}\times \tau$ for a face $\tau\prec\Delta_{\infty}$. 	
If $\tau$ doesn't contain $0$, then $$f_{\tau', (a, A)}((s,g),(t,h))=st f_{\tau,A}(g,h).$$ Since 
$f_{\tau,A}$ has no critical points, so is $f_{\tau', (a,A)}$ and $Z_{\tau',A}$ is empty.
If $0\in \tau$, then $$f_{\tau', (a,A)}((s,g),(t,h))=st(a+f_{\tau,A}(g,h)),$$ and 
$$Z_{\tau',A}=\{(-f_{\tau,A}(g,h),A): g,h\in G(\overline K),\; \mathrm d f_{\tau,A}(g,h)=0\}.$$
So $Z_{\tau',A}$ can be identified with the set of critical values of $f_{\tau,A}$, which is a finite set 
by \cite[III 10.6]{Hartshorne}. 

By the claim, there exists a nonempty open subset $W_{0,K}\subset \mathbb V^{\mathrm{gen}}_K$ such that 
$\mathbb V'^{\mathrm{degen}}_K\cap p_2^{-1}(W_{0,K})$ is finite over $W_{0,K}$. Let 
$F_{1, K}=\mathbb V^{\mathrm{gen}}_K-W_{0,K}$. By the claim again, there exists a smooth dense open subset
$W_{1,K}$ of $F_{1, K}$ such that $\mathbb V'^{\mathrm{degen}}_K\cap p_2^{-1}(W_{1,K})$ is finite over $W_{1,K}$.
Let $F_{2, K}=F_{1, K}-W_{1,K}$. In this way, we get a chain 
$$\emptyset=F_{m,K}\subset\cdots \subset F_{1,K}\subset F_{0,K}=V^{\mathrm{gen}}_K$$
of closed subsets in $\mathbb V^{\mathrm{gen}}_K$ so that each $F_{i,K}-F_{i+1,K}$ is smooth over $\mathrm{Spec}\,K$, 
and $$\mathbb V'^{\mathrm{degen}}_K\cap p_2^{-1}(F_{i,K}-F_{i+1,K})$$ is finite over $F_{i,K}-F_{i+1,K}$.
The proposition follows by the standard passage to limit argument. 
\end{proof}

\begin{lemma}\label{tameramification} Let $U= \mathbb V_R- \overline{\mathbb V^{\mathrm{degen}}_K}$.
For all but finitely many maximal ideals $\mathfrak m$ of $R$, and 
for any $A\in U({\bar k})$, $\mathrm{Hyp}'_{!,k}|_{\mathbb{A}^1_k\times\{A\}}$ is a lisse sheaf on a dense open 
subset of $\mathbb{A}^1_k\times\{A\}$ with tame ramification at $\infty$,
where $k=R/\mathfrak m$. 
\end{lemma}

\begin{proof} Replacing $\mathrm{Spec}\,R$ by a dense open subset, we may assume 
$\mathrm{Hyp}'_{!, R}|_{U'}$ comes from a lisse sheaf by Proposition \ref{mainprop}, 
and there exists a chain
$\emptyset=F_m\subset\cdots\subset F_1 \subset F_{0}=U$ satisfying the conclusion of Lemma \ref{finiteoverbase}. Choose $i$ so that
$A\in (F_{i}-F_{i+1})(k')$ where $k'$ is a finite extension of $k$. Suppose $A$ lies over $\mathfrak m\in\mathrm{Spec}\,R$, and 
let $R'$ be a complete discrete valuation ring unramified over
$R_{\mathfrak m}$ with residue field $k'$. Since $F_{i}-F_{i+1}$ is smooth, we may lift $A$ 
to a section $\tilde A\in (F_{i}-F_{i+1})(R')$. By Lemma \ref{finiteoverbase}, 
$\mathrm{Hyp}'_{!,R}|_{\mathbb{A}^1_{R'}\times\{\tilde A\}}$ 
is lisse outside a closed subset finite over $R'$. By Abhyankar's Lemma \cite[XIII 2.3 a), 5.5]{SGA1},
$\mathrm{Hyp}'_{!,R}|_{\mathbb{A}^1_{k'}\times \{A\}}$ is tamely ramified at $\infty$. By Proposition \ref{basichyp_D} (ii),
$\mathrm{Hyp}'_{!,k}|_{\mathbb{A}^1_k\times\{A\}}$ has the same property.
\end{proof}

\begin{proposition}\label{mainpropo} Let $U= \mathbb V_R- \overline{\mathbb V^{\mathrm{degen}}_K}$.
For all but finitely many maximal ideals $\mathfrak m$ of $R$, and for any $A\in U({\bar k})$, 
$(\mathrm{Hyp}_{!,k})_{A}$ is concentrated in degree $-\mathrm{dim}\,\mathbb V_K$ and its rank 
equals to the generic rank of $\mathrm{Hyp}'_{!,k}$, and is bounded by 
$d!\int_{\Delta_{\infty}\cap\mathfrak C} 
\prod_{\alpha \in R^+} \frac{\lambda(H_\alpha)^2}{\rho(H_\alpha)^2}\mathrm d\lambda,$ where $k=R/\mathfrak m$.
\end{proposition}

\begin{proof}
We identify $\mathbb{A}^1_k\times \{A\}$ with $\mathbb A^1_{\bar k}$. Let
$$K=\mathrm{Hyp}'_{!,k}|_{\mathbb{A}^1_k\times \{A\}},\quad K'=
(\mathfrak F_{\psi,\mathbb A^1_{\mathbb V_k}}(\mathrm{Hyp}'_{!,k}))|_{\mathbb{A}^1_k\times \{A\}}\cong \mathfrak F_{\psi,
\mathbb A^1_{\bar k}}(K).$$ 
By Lemma \ref{tameramification} and \cite[2.3.1.3]{L}, $\mathcal H^i(K')$ are lisse on $\mathbb{A}^1_{\bar k}-\{0\}$ for all $i$. 
By Proposition \ref{homo_nonhomo}, we have $K'|_0=0$. Let $\bar\eta_0$ be a geometric point over the generic point of the 
strict localization of $\mathbb{A}^1_{\bar k}$ at $0$. By \cite[2.3.2.1]{L}, we have
$$K'|_{\bar\eta_0}\cong R\Psi_{\bar\eta_0}(K')\cong R\Phi_{\bar\eta_0}(K')\cong 
\mathfrak F_{\psi}^{(\infty,0)}(K|_{\bar \eta_\infty}),$$ where $\eta_\infty$ is the generic point of
the strict henselizatioin of $\mathbb{P}^1_{\bar k}$ at $\infty$.  Since 
$A\in U(\bar k)$, by Lemma \ref{tameramification}, $K|_{\bar \eta_\infty}$ is tamely ramified. 
By Proposition \ref{mainprop}, $K|_{\bar\eta_\infty}$ is concentrated 
in degree $-(\mathrm{dim}\,\mathbb V_K+1)$, and its rank equals to the generic rank of $\mathrm{Hyp}'_{!,k}$, and 
hence is bounded by 
\begin{align*}
(d+1)!\int_{\bigcup_{t\in [0,1]}t(1\times \Delta_{\infty}\cap\mathfrak C)}
\prod_{\alpha \in R^+} \frac{\lambda(H_\alpha)^2}{\rho(H_\alpha)^2}\mathrm dt \mathrm d\lambda=
d!\int_{\Delta_{\infty}\cap\mathfrak C} 
\prod_{\alpha \in R^+} \frac{\lambda(H_\alpha)^2}{\rho(H_\alpha)^2}\mathrm d\lambda.
\end{align*} (Confer the calculation in the proof of Theorem \ref{modifiedhyp}.)
By \cite[2.4.3 (ii) b)]{L}, 
$\mathfrak F_{\psi}^{(\infty,0)}(K|_{\bar\eta_\infty})$ is concentrated in degree $-(\mathrm{dim}\,\mathbb V_K+1)$ and its rank 
equals to the generic rank of $\mathrm{Hyp}'_{!,k}$. So $K'|_{\mathbb A^1_{\bar k}-\{0\}}$ is concentrated in degree 
$-(\mathrm{dim}\,\mathbb V_K+1)$ and 
$\mathcal H^{\mathrm{dim}\,\mathbb V_K+1}(K')|_{\mathbb A^1_{\bar k}-\{0\}}$ is lisse with the rank bounded by $d!\int_{\Delta_{\infty}\cap\mathfrak C} 
\prod_{\alpha \in R^+} \frac{(\lambda, \alpha)^2}{(\rho, \alpha)^2}\mathrm d\lambda$. By Proposition \ref{homo_nonhomo},
our assertion for $(\mathrm{Hyp}_{!,k})_A$ follows by taking the stalk of $K'$ at $(-1, A)$.  
\end{proof}

The main conclusion of this section is the following.

\begin{theorem}\label{mainthm} Suppose $G_R$ is a split reductive group $R$-scheme, 
$\rho_{j, R}: G_R\to \mathrm{GL}(V_{j, R})$ $(j=1, \ldots, N)$ are representations such that 
$\rho_{j,\overline K}$ are irreducible, and the morphism 
$$\iota_R: G_R\to \mathbb V_R=\prod_{j=1}^N \mathrm{End}_R(V_{j, R}), \quad g\mapsto (\rho_{1,R}(g),\ldots, \rho_{N,R}(g))$$
is quasi-finite. Let 
$$U=\mathbb V_R-\overline {\mathbb V_K^{\mathrm{degen}}}$$ be the complement of the 
Zariski closure  in $\mathbb V_R$ of the set of degenerate Laurent polynomials. 
There exists a finite set $S$ of maximal ideals of $R$ so that for any maximal ideal $\mathfrak m\not\in S$, the restriction
to $U\otimes_R (R/\mathfrak m)$ of the 
hypergeometric sheaf $\mathrm{Hyp}_{k, !}$ on $\mathbb V_{k}=V_R\otimes_R R/\mathfrak m$ comes from a lisse sheaf of rank
$\leq d!\int_{\Delta_\infty\cap\mathfrak C} 
\prod_{\alpha \in R^+} 
\frac{\lambda(H_\alpha)^2}{\rho(H_\alpha)^2}\mathrm d\lambda.$ 
\end{theorem}

\begin{proof} By Proposition \ref{mainpropo} and Proposition \ref{perverseranklisse} in the Appendix, after removing finitely many 
maximal ideal of $R$, the perverse sheaf $\mathrm{Hyp}_{!, k}$ comes from a lisse sheaf when restricted to $U\otimes_R R/\mathfrak m$, 
and its rank has the claimed bound. 
\end{proof}

\begin{proof}[Proof of Theorem \ref{thm:expsum}] After removing finitely many maximal ideals, we may assume $V_{j, R}$ are 
free $R$-modules. Let $C_R$ and $C_K$ be the affine coordinate ring of $\mathbb V_R$ 
and $\mathbb V_K$, respectively. They are polynomials rings over $R$ and over $K$, respectively, and we may identify 
$C_R$ with a subring of $C_K$. 
Let $\mathfrak a_K$ be the ideal of $C_K$ defining the closed subset $\mathbb V_K^{\mathrm{degen}}$.
Then the ideal of $C_R$ define the closed set $\overline{\mathbb V_K^{\mathrm{degen}}}$ is 
$C_R\cap \mathfrak a_K$. Since the Laurent polynomial $f_A(g)=\sum_j \mathrm{Tr}(A_j\rho_j(g))$ is nondegenerate, there exists a polynomial 
$g \in \mathfrak a_K$ such that $g(A)\not=0$. Multiplying $g$ by a common denominator of its coefficients, we may assume 
$g \in C_R$. Then $g(A)$ is a nonzero element in $R$, and hence there are only finitely maximal ideals of $R$ containing $g(R)$. 
Let $\mathfrak m$ be a maximal ideal of $R$ not containing $g(A)$. 
As $g\in C_R\cap \mathfrak a_K$, the point $A\mod \mathfrak m$ lies in 
$U=\mathbb V_R- \overline{\mathbb V_K^{\mathrm{degen}}}$. 
By Theorem \ref{mainthm}, after deleting more finitely many maximal ideals of $R$, we may assume  the restriction
to $U\otimes_R (R/\mathfrak m)$ of the 
hypergeometric sheaf $\mathrm{Hyp}_{k, !}$ on $\mathbb V_{k}=V_R\otimes_R R/\mathfrak m$ comes from a lisse sheaf of rank
$\leq d!\int_{\Delta_\infty\cap\mathfrak C} 
\prod_{\alpha \in R^+} 
\frac{\lambda(H_\alpha)^2}{\rho(H_\alpha)^2}\mathrm d\lambda.$ We thus have 
$\mathcal H^i(\mathrm{Hyp}_{k, !})|_{U\otimes_R (R/\mathfrak m)}=0$ for $i\not=- m$ and 
$\mathcal H^{-m}(\mathrm{Hyp}_{k, !})|_{U\otimes_R (R/\mathfrak m)}$ is a lisse sheaf. 
We then have 
\begin{align*}
\sum_{g\in G(k')} \psi \Big(\mathrm{Tr}_{k'/k}\Big(\sum_{j=1}^N 
\mathrm{Tr}(A_{j}\rho_j(g))\Big)\Big)
&= (-1)^{d+m} \mathrm{Tr}(\mathrm{Frob}_A^{[k':k]}, (\mathrm{Hyp}_{\psi, !})_{\bar A}) \\
&=(-1)^d\mathrm{Tr}(\mathrm{Frob}_A^{[k':k]}, \mathcal H^{-m}(\mathrm{Hyp}_{\psi, !})_{\bar A}).
\end{align*}
Since $\mathrm{Hyp}_{\psi, !}$ is mixed of weight $\leq d+m$, all eigenvalues of 
$\mathrm{Frob}_A$ on $\mathcal H^{-m}(\mathrm{Hyp}_{\psi, !})_{\bar A}$ have 
absolute value $\leq q^{d/2}$, where $q$ is the number of elements of $k$.
The number of eigenvalues counted with multiplicity is $\leq  d!\int_{\Delta_{\infty}\cap\mathfrak C} 
\prod_{\alpha \in R^+} \frac{(\lambda, \alpha)^2}{(\rho, \alpha)^2}\mathrm d\lambda$.
We thus have 
\[\Big\vert\sum_{g\in G(k')} \psi \Big(\mathrm{Tr}_{k'/k}\Big(\sum_{j=1}^N 
\mathrm{Tr}(A_{j}\rho_j(g))\Big)\Big)\Big\vert \leq q'^{d/2} d!\int_{\Delta_{\infty}\cap\mathfrak C} 
\prod_{\alpha \in R^+} \frac{(\lambda, \alpha)^2}{(\rho, \alpha)^2}\mathrm d\lambda. \qedhere\]
\end{proof}

\section{Appendix}

Let $K$ be a field of characteristic $0$, $G$ a reductive group over $K$, 
$\rho_j:G\to\mathrm{GL}(V_j)$ $(j=1, \ldots, N)$ a family of representations, 
$\mathbb V=\prod_{j=1}^N\mathrm{End}(V)$, $$\mathrm{pr}: G\times G \times \mathbb V\to \mathbb V$$
the projection, $\Delta_\infty$ the Newton polytope at $\infty$, $\tau$ a face of $\Delta_\infty$ not
containing the origin, and $F_\tau$ the morphism 
$$F_\tau: G\times G\times\mathbb V\to \mathbb A^1,\quad (g, h, (A_1, \ldots, A_N))\mapsto 
\sum_{j=1}^N \mathrm{Tr}(A_j \rho_j(g)e(\tau)_j\rho_j(h^{-1})).$$
The equation $\mathrm{d}_{(g,h)}F_\tau=0$ 
defines a closed subscheme of $G\times G \times \mathbb V$ which we denote by 
the same equation, where $\mathrm{d}_{(g,h)}$ denotes the relative exterior differentiation with respect to
the $(g,h)$-variable. Define
$$\mathbb V^{\mathrm{degen}}=\bigcup_{0\not \in \tau\prec\Delta_\infty}\mathrm{pr}
(\mathrm{d}_{(g,h)}F_\tau=0).$$

\begin{proposition}\label{degenclosed} 
The set $\mathbb V^{\mathrm{degen}}$ is a Zariski closed subset of $\mathbb V$. 
\end{proposition}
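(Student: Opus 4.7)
The plan is to identify, for each face $\tau\prec\Delta_\infty$ with $0\notin\tau$, the locus $\mathbb V_{K,\tau}^{\mathrm{degen}}:=\mathrm{pr}(\{\mathrm d_{(g,h)}F_\tau=0\})$ with the image of a conormal variety over the $(G_K\times_K G_K)$-orbit $O_\tau$ of $e(\tau)$ in $\mathbb V_K$, and then use a proper compactification to establish closedness. Since $f_A(B)=\sum_j\mathrm{Tr}(A_jB_j)$ is linear and $F_\tau(g,h,A)=f_A(\Phi_\tau(g,h))$ for the smooth orbit map $\Phi_\tau\colon G_K\times_K G_K\to\mathbb V_K$, $(g,h)\mapsto(\rho_j(g)e(\tau)_j\rho_j(h))_j$, the chain rule gives $\mathrm d_{(g,h)}F_\tau=0$ iff $A$ annihilates the image of $\mathrm d\Phi_\tau$, namely $T_{\Phi_\tau(g,h)}O_\tau$. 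Therefore $\mathbb V_{K,\tau}^{\mathrm{degen}}$ equals the image under the second projection $p_2\colon O_\tau\times\mathbb V_K\to\mathbb V_K$ of the conormal variety $\Lambda_\tau:=N^*_{O_\tau/\mathbb V_K}$.

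To obtain closedness, I would choose a $(G_K\times_K G_K)$-equivariant projective compactification $\bar O_\tau$ of the affine orbit closure $\overline{O_\tau}\subset\mathbb V_K$ — for instance, the closure of $O_\tau$ in $\mathbb P(\mathbb V_K\oplus K)$, followed if necessary by an equivariant resolution. Let $\bar\Lambda_\tau$ be the Zariski closure of $\Lambda_\tau$ inside $\bar O_\tau\times\mathbb V_K$. Since $\bar O_\tau$ is projective, the projection $\bar O_\tau\times\mathbb V_K\to\mathbb V_K$ is proper, so its image $p_2(\bar\Lambda_\tau)$ is Zariski closed in $\mathbb V_K$, and by construction $\mathbb V_{K,\tau}^{\mathrm{degen}}\subseteq p_2(\bar\Lambda_\tau)$.

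The main obstacle is then the reverse inclusion $p_2(\bar\Lambda_\tau)\subseteq\mathbb V_K^{\mathrm{degen}}$. A boundary point $A_\infty\in p_2(\bar\Lambda_\tau)$ arises as a limit of $A_n\in(T_{y_n}O_\tau)^\perp$ with $y_n\to y_\infty$ in a boundary orbit $O_\sigma$ of $\bar O_\tau$; passing to a subsequence so that $T_{y_n}O_\tau$ converges in the Grassmannian of $\dim O_\tau$-planes to some $T_\infty$, semi-continuity forces $T_\infty\supseteq T_{y_\infty}O_\sigma$, so $A_\infty\perp T_{y_\infty}O_\sigma$ and $A_\infty\in\mathbb V_{K,\sigma}^{\mathrm{degen}}$. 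The crux is to ensure every such boundary orbit $O_\sigma$ corresponds to a face $\sigma\prec\Delta_\infty$ with $0\notin\sigma$: for orbits inside $\mathbb V_K$ this follows from Kapranov's description (Proposition~\ref{orbitdecomposition}) combined with the hypothesis $0\notin\tau$, while orbits added at infinity require an equivariantly chosen compactification (in the non-homogeneous case one may first reduce to the homogeneous setting via the extension $(\mathbb G_{m,R}\times_R G_R,\rho'_{0,R},\ldots,\rho'_{N,R})$ introduced in Section 4). Granted this, $\mathbb V_K^{\mathrm{degen}}=\bigcup_\tau p_2(\bar\Lambda_\tau)$ is a finite union of closed sets, hence closed.
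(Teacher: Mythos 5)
Your reduction of $\mathbb V_{K,\tau}^{\mathrm{degen}}$ to the image of the conormal variety of the orbit $O_\tau$ is correct and is the same starting point as the paper's, but the two steps you flag as "the crux" are genuine gaps, and they are exactly the points the paper's formulation is designed to avoid. First, the semicontinuity claim $T_\infty\supseteq T_{y_\infty}O_\sigma$ is Whitney's condition (a) for the pair $(O_\tau,O_\sigma)$; it is not automatic for an orbit stratification and you give no argument (it does hold here because there are finitely many orbits, each homogeneous, and the failure locus of (a) is a proper closed invariant subset of $O_\sigma$ — but that needs to be said). Second, and more seriously, the boundary of $\bar O_\tau$ in $\mathbb P(\mathbb V_K\oplus K)$ contains the origin $0\in\mathbb V_K$ (whose conormal is all of $\mathbb V_K$, so your argument yields nothing for sequences $y_n\to 0$) as well as points on the hyperplane at infinity, where "$A_\infty\perp T_{y_\infty}O_\sigma$" does not yet refer to any face of $\Delta_\infty$. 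Both problems are resolved only by using the conical structure of the orbits (under the homogeneity condition) to rescale $y_n$ onto the projectivization — i.e., by replacing $\bar O_\tau\times\mathbb V_K$ with $\mathbb P(X_K)\times\mathbb V_K$ — and you do not carry this out.

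The paper closes both gaps at once with an incidence-variety device that makes the limit argument unnecessary. Under the homogeneity condition it defines
$$S=\Big\{([B],A)\in \mathbb P(X_K)\times_K\mathbb V_K:\ \tfrac{\mathrm d}{\mathrm dt}\big|_{t=0}\textstyle\sum_j \mathrm{Tr}(A_j e^{t\xi_1}B_j e^{-t\xi_2})=0\ \ \forall (\xi_1,\xi_2)\in\mathfrak g\times\mathfrak g\Big\},$$
where $X_K$ is the (conical) closure of $\iota(G_K)$. These are bilinear, scale-invariant equations in $(B,A)$, hence $S$ is closed; and in characteristic $0$ the vectors $\frac{\mathrm d}{\mathrm dt}|_{t=0}e^{t\xi_1}Be^{-t\xi_2}$ span $T_B(G_KBG_K)$ at \emph{every} $B\in X_K\setminus\{0\}$, so the fiber of $S$ over $[B]$ is automatically the conormal space of whichever orbit $B$ lies in. Thus $S$ already contains all the "boundary" conormal data for all faces simultaneously, $\mathbb V_K^{\mathrm{degen}}=\mathrm{pr}(S)$ on the nose, and closedness follows from properness of $\mathbb P(X_K)\to\mathrm{Spec}\,K$ with no Whitney condition and no tangent-space limits. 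For the general case the paper, like you, passes to $\mathbb G_{m,K}\times_K G_K$, but note the extra point: it works with $\mathbb P_\infty(X'_K)$, the intersection of $\overline{X'_K}$ with the hyperplane at infinity, precisely because the faces $\tau\prec\Delta_\infty$ with $0\notin\tau$ are exactly those for which the $\mathbb A^1$-coordinate of $e(\{1\}\times\tau)$ vanishes. If you incorporate the spanning-vector-fields observation in place of the semicontinuity step and restrict to the projectivized cone, your argument becomes the paper's.
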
 

\begin{proof} Let $\rho'_j$ $(j=1,\ldots, N)$ be the representations 
$$\rho'_j: \mathbb G_m\times G\to \mathrm{GL}(V_j), \quad (t, g)\mapsto t\rho_j(g).$$
and let $X$ be the Zariski closure of the image of the morphism 
$$\iota: G\to \mathbb V, \quad g\mapsto (\rho_1(g), \ldots, \rho_N(g)).$$
Embed $\mathbb V$ into $\mathbb P(\mathbb V'):=\mathbb P(\mathbb A^1\oplus \mathbb V)$ via 
$v\mapsto [1:v]$. Let $\overline X$ be the closure of $X$ in $\mathbb P(\mathbb V')$ and let 
$$\overline X_\infty=\{v\in\mathbb P(\mathbb V): [0: v]\in \overline X\}.$$
By Proposition \ref{lm:Kapranovorbit} (iii), we have 
$$\overline X_\infty=\bigcup_{0\not\in\tau \prec\Delta_\infty}
\mathbb P((\mathbb G_{m}\times G)e(\tau)(\mathbb G_{m}\times G)),$$
where $\mathbb G_{m}\times G$ acts on $\mathbb V$ via 
$(\rho'_{1},\ldots, \rho'_{N})$, and $\mathbb P( (\mathbb G_{m}\times G)e(\tau)(\mathbb G_{m}\times G))$ 
is the projectivization of $(\mathbb G_{m}\times G)e(\tau)(\mathbb G_{m}\times G)$. Let 
$$S=\{([B],A)\in \overline X_\infty \times\mathbb V: 
\frac{\mathrm d}{\mathrm dt}\Big|_{t=0}\Big(\sum_{j=1}^N \mathrm{Tr}(A e^{t\xi_1} Be^{-t\xi_2})\Big)
=0 \hbox{ for all } (\xi_1, \xi_2)\in \mathfrak g\times\mathfrak g\}.$$
Then $S$ is a closed subset of $\overline X_\infty\times\mathbb V$, and $\mathbb V^{\mathrm{degen}}$ 
is the image of $S$ under the projection $\mathrm{pr}': \overline X_\infty \times\mathbb V\to 
\mathbb V$, which is a proper morphism. So $\mathbb V^{\mathrm{degen}}$ is closed. 
\end{proof}

\begin{proposition}\label{Qllisse}
Let $X$ be an integral normal noetherian scheme, 
let $\eta$ be its generic point, and let $\mathcal F$ be a $\overline{\mathbb{Q}}_{\ell}$-sheaf on $X$. 
The following conditions are equivalent.
\begin{enumerate}[(i)]
\item  $\mathcal F$ is a lisse sheaf.\label{Qllisse(1)}
\item For any $x \in X$ and any specialization morphism $\bar \eta\to \tilde{X}_{\bar x}$, the specialization map 
$\mathcal F_{\bar x}\to \mathcal F_{\bar \eta}$ is an isomorphism, where $\tilde{X}_{\bar x}$ is the strict 
localization of $X$ at $x$.
\label{Qllisse(2)}
\item For any $x \in X$, there exists a specialization morphism 
$\bar \eta\to \tilde{X}_{\bar x}$ so that the specialization map 
$\mathcal F_{\bar x}\to \mathcal F_{\bar \eta}$ is an isomorphism.\label{Qllisse(3)}
\end{enumerate} 
\end{proposition}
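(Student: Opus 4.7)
The plan is to prove the cycle (i) $\Rightarrow$ (ii) $\Rightarrow$ (iii) $\Rightarrow$ (i), with (iii) $\Rightarrow$ (i) being the substantial direction. The first two implications are formal: if $\mathcal F$ is lisse, it corresponds to a continuous finite-dimensional representation of $\pi_1(X,\bar\eta)$, and any specialization morphism $\bar\eta\to\tilde X_{\bar x}$ induces the identity on the underlying vector space of this representation, giving (i) $\Rightarrow$ (ii); the implication (ii) $\Rightarrow$ (iii) is trivial.

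For the substantial implication (iii) $\Rightarrow$ (i), I argue by contradiction. Since $\mathcal F$ is constructible, the locus $U \subseteq X$ on which $\mathcal F$ is lisse is open and contains $\eta$. Assume $U \neq X$, and let $x$ be a generic point of an irreducible component of $Z := X \setminus U$ with its reduced structure; every proper generization of $x$ in $X$ then belongs to $U$. Normality is preserved under strict henselization, so $\tilde X_{\bar x}$ is a normal integral local scheme, and every non-closed point of $\tilde X_{\bar x}$ maps to $U$. Hence $\mathcal F$ restricts to a lisse sheaf on $U_x := \tilde X_{\bar x}\setminus\{\bar x\}$ (which is connected, by integrality of $\tilde X_{\bar x}$), corresponding to a continuous representation of $\Pi := \pi_1(U_x,\bar\eta)$ on $M := \mathcal F_{\bar\eta}$, where $\bar\eta$ comes from the specialization morphism in (iii). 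The key observation is that the specialization map factors as
\[
\mathcal F_{\bar x} \xrightarrow{\phi} M^{\Pi} \hookrightarrow M,
\]
where $\phi$ is the open-closed recollement morphism $i^*\mathcal F \to i^*j_*j^*\mathcal F$ for $(U_x,\{\bar x\})$, and the equality $(j_*j^*\mathcal F)_{\bar x} = \Gamma(U_x, \mathcal F|_{U_x}) = M^{\Pi}$ uses the connectedness of $U_x$ to identify global sections of a lisse sheaf with the invariants of its stalk. Since (iii) says the composite is an isomorphism, the inclusion $M^{\Pi} \hookrightarrow M$ must be an equality, so $\Pi$ acts trivially on $M$ and $\mathcal F|_{U_x}$ is the constant sheaf with value $M$. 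The recollement triple of $\mathcal F|_{\tilde X_{\bar x}}$ then coincides, via $\phi$, with that of the constant sheaf $M$ on $\tilde X_{\bar x}$, so $\mathcal F|_{\tilde X_{\bar x}}$ is itself constant; by constructibility this descends to lisseness of $\mathcal F$ on a Zariski open neighborhood of $x$, contradicting $x \in Z$.

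The main obstacle is executing the recollement together with the identification $\Gamma(U_x,-) = (-)^{\Pi}$ at the level of $\overline{\mathbb{Q}}_{\ell}$-sheaves rather than torsion sheaves, where these facts are classical. The standard remedy is to represent the $\overline{\mathbb{Q}}_{\ell}$-sheaf as an $\ell$-adic system of $\mathbb Z/\ell^n$-sheaves with the usual compatibilities, apply the above argument at each finite level, and pass to the limit (observing that the finiteness conditions on an $\ell$-adic sheaf propagate the conclusion from each level to the whole system). Normality of $X$ is essential throughout: it is what makes $\tilde X_{\bar x}$ integral, hence $U_x$ connected, which is precisely the link that turns the isomorphism of stalks in (iii) into triviality of the monodromy representation around $x$.
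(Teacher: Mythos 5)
Your skeleton --- cycling through (i) $\Rightarrow$ (ii) $\Rightarrow$ (iii) $\Rightarrow$ (i), localizing at a generic point $x$ of the non-lisse locus, using normality to make the punctured strict localization $U_x$ irreducible and hence connected, and converting the hypothesis into triviality of the monodromy of $\mathcal F|_{U_x}$ --- is sound for torsion sheaves, and at that level it is a legitimate alternative to the paper's stratification argument. The genuine gap is in your final paragraph, and it sits exactly where the whole difficulty of the proposition lives. Condition (iii) is a statement about the $\overline{\mathbb Q}_\ell$-stalks, i.e.\ about an integral model $\mathcal G=(\mathcal G_n)$ only \emph{after} tensoring with $E=\mathrm{Frac}(D)$. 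It does \emph{not} imply that the finite-level specialization maps $\mathcal G_{n,\bar x}\to\mathcal G_{n,\bar\eta}$ are isomorphisms; in general they are only isomorphisms up to $\lambda$-power torsion (already over the spectrum of a discrete valuation ring one can choose $\mathcal G_n$ so that each specialization map is multiplication by $\ell$, hence an isomorphism rationally but not integrally). So ``apply the above argument at each finite level'' cannot be carried out: at each finite level the hypothesis of your argument simply fails. Running the monodromy argument instead at the $\overline{\mathbb Q}_\ell$-level does yield triviality of the $\pi_1(U_x,\bar\eta)$-action on $M$, but the concluding step ``trivial monodromy plus constructibility gives lisseness near $x$'' is, for $\overline{\mathbb Q}_\ell$-sheaves, essentially the implication being proved: lisseness requires exhibiting an integral model whose finite levels are locally constant, and that is not a formal consequence of the rational stalk data.

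This missing device is exactly what the paper's proof supplies. It first establishes a uniform claim: there is an $s\geq 0$ such that for every $x$ the image of the integral specialization map $\mathcal G_{\bar x}\to\mathcal G_{\bar\eta}$ contains $\lambda^{s}\mathcal G_{\bar\eta}$ (obtained from a stratification on which all $\mathcal G_n$ are locally constant, combined with the rational isomorphisms at the finitely many generic points of strata). It then replaces $\mathcal G$ by the modified system $\mathcal G'_n$ built from $\ker(\mathcal G_{n+s}\to \mathcal G_s\to i_*i^{-1}\mathcal G_s)$ modulo $\lambda^n$, computes its stalks to be $\lambda^{s}\mathcal G_{\bar\eta}/\lambda^{n+s}\mathcal G_{\bar\eta}$, checks that the finite-level specialization maps of $\mathcal G'$ genuinely are isomorphisms, and only then invokes the torsion-level criterion for local constancy, finally observing that $\mathcal G'\otimes_D E\cong\mathcal F$. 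Some construction of this kind --- a new integral model whose specialization maps are honestly isomorphisms at each finite level --- must be inserted into your argument before any passage to the limit; without it the proof of (iii) $\Rightarrow$ (i) is incomplete.
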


\begin{proof}
For (\ref{Qllisse(1)})$\Rightarrow$(\ref{Qllisse(2)}), we may reduce to the case of 
$\mathbb Z/\ell^n\mathbb Z$-sheaves, and the assertion is then clear. 
Since $X$ is normal, for any $x\in X$, $\tilde{X}_{\bar x}$ is integral. So $\tilde{X}_{\bar x}\times_X \eta$ has only one point. Thus 
any two specialization morphisms $\bar \eta\to \tilde{X}_{\bar x}$ differ by an automorphism of 
$\bar \eta$, and hence (\ref{Qllisse(2)})$\Leftrightarrow$(\ref{Qllisse(3)}). It remains to prove (\ref{Qllisse(2)})$\Rightarrow$(\ref{Qllisse(1)}).
Choose a finite extension
$E$ of ${\mathbb Q}_\ell$ contained in $\overline{\mathbb Q}_\ell$ such that $\mathcal F$ comes from $E$-sheaf $\mathcal F_E$. 
Let $D$ be the discrete valuation ring in $E$ and let $\lambda$ be an uniformizer.	
Assume $\mathcal F_E=\mathcal G\otimes_D E$, where $\mathcal G=(\mathcal G_n)_{n\geq 1}$ and each 
$\mathcal G_n$ is a flat constructible sheaf of $(D/\lambda^{n}D)$-modules, equipped with 
isomorphisms $\mathcal G_{n+1}\otimes_{D/\lambda^{n+1}D} D/\lambda^{n}D\cong \mathcal G_n$.

\medskip
\noindent\emph{Claim:} There exists a nonnegative integer $s$ 
such that for any $x\in X$, the image of a specialization map
$\mathrm{sp}: \mathcal G_{\bar x}\to \mathcal G_{\bar \eta}$ contains $\lambda^{s} \mathcal G_{\bar\eta}$. 	
\medskip

Choose a sequence 
$$\emptyset=Z_0\subset Z_1\subset\cdots \subset Z_s=X$$ of closed subsets in $X$ so that $Z_j-Z_{j-1}$ is irreducible 
and $\mathcal G_1|_{Z_j-Z_{j-1}}$ is locally constant for each $j$. Then $\mathcal G_n|_{Z_j-Z_{j-1}}$ is locally constant for each $j$ and each $n$. Let 
$\eta_j$ be the generic point of $Z_j-Z_{j-1}$. Fix a specialization morphism ${\bar \eta}\to \tilde{X}_{\bar \eta_j}$ which induces 
a special map  $\mathcal G_{\bar \eta_j}\to \mathcal G_{\bar\eta}$.
For any $x\in X$, choose $j$ so that $x\in Z_j-Z_{j-1}$. Each specialization map $\mathcal G_{\bar x}\to \mathcal G_{\bar\eta_j}$ 
is an isomorphism since each $\mathcal G_n|_{Z_j-Z_{j-1}}$ is locally constant. 
Up to an automorphism of $\bar\eta$, any specialization map 
$\mathcal G_{\bar x}\to \mathcal G_{\bar\eta}$ can be factorized as
$\mathcal G_{\bar x}\stackrel\cong\to \mathcal G_{\bar\eta_j}\to \mathcal G_{\bar\eta}$.	
It suffices to take a sufficiently large $s$ so that for each $j$, the the image of 
$\mathrm{sp}: \mathcal G_{\bar\eta_j}\to \mathcal G_{\bar\eta}$ contains $\lambda^{s} \mathcal G_{\bar\eta}$. 
Such an $s$ exists since by our assumption, we have isomorphisms
$$\mathrm{sp}: \mathcal G_{\bar\eta_j}\otimes_D E\cong \mathcal F_{\bar\eta_j}\stackrel\cong \to \mathcal F_{\bar\eta}\cong
\mathcal G_{\bar\eta}\otimes_D E.$$
This proves the claim.
	
Let $i: \eta\to X$ be the canonical morphism. For any pair $m\geq n$, one can check that 
the canonical epimorphism $\mathcal G_{m+s}\to \mathcal G_{n+s}$
induces an isomorphism 
\[\frac{\mathrm{ker}(\mathcal G_{m+s}\to \mathcal G_{s}\to i_*i^{-1}\mathcal G_{s})}{\lambda^{n} 
\mathrm{ker}(\mathcal G_{m+s}\to \mathcal G_{s}\to i_*i^{-1}\mathcal G_{s})}\stackrel\cong\to
\frac{\mathrm{ker}(\mathcal G_{n+s}\to \mathcal G_{s}\to i_*i^{-1}\mathcal G_{s})}{\lambda^{n} 
\mathrm{ker}(\mathcal G_{n+s}\to \mathcal G_{s}\to i_*i^{-1}\mathcal G_{s})}.\]
Let $\mathcal G'_n$ be the righthand side of the isomorphism. Then for any $n$, we have
$$\mathcal G'_{n+1}\otimes_{D/\lambda^{n+1}D} D/\lambda^{n}D\cong \mathcal G'_n.$$ 
For any $x\in X$, fix a specialization morphism $\bar \eta\to \tilde{X}_{\bar x}$. Recall that 
$\tilde{X}_{\bar x}\times_X \eta$ has only one point. So $\bar \eta\to \tilde{X}_{\bar x}\times_X \eta$ is faithfully flat, and the canonical map
\[(i_*i^{-1}\mathcal G_{s})_{\bar x}\cong \Gamma(\tilde{X}_{\bar x}\times_X \eta, \mathcal G_s)\to 
\Gamma(\bar \eta, \mathcal G_s)\cong \mathcal G_{s,\bar \eta}\]
is injective. So we may identify $\mathrm{ker}(\mathcal G_{n+s}\to \mathcal G_{s}\to i_*i^{-1}\mathcal G_{s})_{\bar x}$ with 
$\mathrm{ker}(\mathcal G_{n+s,\bar x}\to \mathcal G_{s, \bar x}\to \mathcal G_{s,\bar \eta})$, and we have
\[\mathcal G'_{n,\bar x}\cong\frac{\mathrm{ker}(\mathcal G_{n+s,\bar x}\to \mathcal G_{s, \bar x}\to \mathcal G_{s,\bar \eta})}{\lambda^{n} 
\mathrm{ker}(\mathcal G_{n+s,\bar x}\to \mathcal G_{s, \bar x}\to \mathcal G_{s,\bar \eta})}\cong 
\frac{\mathrm{ker}(\mathcal G_{\bar x}\to \mathcal G_{s,\bar \eta})}
{\lambda^{n} \mathrm{ker}(\mathcal G_{\bar x}\to \mathcal G_{s,\bar \eta})},\] where the second isomorphism 
is induced by the epimorphism $\mathcal G_{\bar x}\to \mathcal G_{n+s,\bar x}$. 
Since the specialization map $\mathrm{sp}: \mathcal G_{\bar x}\to \mathcal G_{\bar\eta}$ is injective, we may identify 
$\mathrm{ker}(\mathcal G_{\bar x}\to \mathcal G_{s,\bar \eta})$ with its image in $\mathcal G_{\bar \eta}$, which is just 
$$\mathrm{im}(\mathrm{sp}: \mathcal G_{\bar x}\to \mathcal G_{\bar \eta})\cap \lambda^{s} \mathcal G_{\bar \eta}=
\lambda^{s} \mathcal G_{\bar \eta}$$ by the above claim. So we have 
$$\mathcal G'_{n,\bar x}\cong \lambda^{s} \mathcal G_{\bar \eta}/  \lambda^{n+s} \mathcal G_{\bar \eta}.$$
This shows the specialization map $\mathrm{sp}: \mathcal G'_{n,\bar x}\to \mathcal G'_{n,\bar \eta}$ is an isomorphism. So 
$\mathcal G'_n$ is locally constant by \cite[5.8.9]{etale}.
Thus $\mathcal G'=(\mathcal G'_n)$ is a lisse $\lambda$-adic sheaf. 
The inclusions $$\mathrm{ker}(\mathcal G_{n+s}\to \mathcal G_{s}\to i_*i^{-1}\mathcal G_{s})\hookrightarrow \mathcal G_{n+s}$$ 
induce a morphism 
$\mathcal G'\to \mathcal G$ of $\lambda$-adic sheaf. At any $x\in X$, the stalk of this morphism is
$$\mathrm{ker}(\mathcal G_{\bar x}\to \mathcal G_{s,\bar \eta})\to \mathcal G_{\bar x}.$$ It becomes an isomorphism after tensoring with $E$.
We have $$\mathcal F\cong \mathcal G\otimes_DE\cong \mathcal G'\otimes_D E.$$ So $\mathcal F$ is lisse. 
\end{proof}

\begin{proposition}\label{perverseranklisse}
Let $X$ be a $d$ dimensional integral normal scheme of finite type over a field, 
and let $\mathcal F$ be a $\overline{\mathbb Q}_\ell$-sheaf
on $X$. Suppose the stalks of $\mathcal F$ at closed points of $X$ are of the same dimension, and $\mathcal F[d]$ is perverse. 
Then $\mathcal F$ is a lisse sheaf.
\end{proposition}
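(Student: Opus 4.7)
The plan is to invoke Proposition \ref{Qllisse} and reduce the claim to showing that for every point $x \in X$ the specialization $\mathcal F_{\bar x} \to \mathcal F_{\bar \eta}$ is an isomorphism, where $\eta$ denotes the generic point of $X$. The argument proceeds in three stages: identifying the generic rank, proving injectivity of every specialization map via the cosupport condition, and then deducing surjectivity by factoring through a closed specialization.

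First I would fix the generic rank. Since $\mathcal F$ is constructible and $X$ is integral, $\mathcal F$ is lisse on a dense open $V \subset X$. Because $X$ is of finite type over a field, it is Jacobson, so $V$ contains closed points of $X$. The hypothesis that all closed-point stalks share a common dimension then forces this common dimension to equal the generic rank $r := \dim \mathcal F_{\bar \eta}$.

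The heart of the argument is injectivity of every specialization map $\mathcal F_{\bar x} \to \mathcal F_{\bar \eta}$. The cosupport condition for the perverse sheaf $\mathcal F[d]$ reads $\mathcal H^j(i_y^! \mathcal F[d]) = 0$ for $j < -\dim \overline{\{y\}}$, and in particular, for every $y \in X$ of codimension $\ge 1$, it yields $\mathcal H^0(i_y^! \mathcal F) = 0$, i.e., $\mathcal F$ admits no nonzero local section supported at $\bar y$. Given $s \in \mathcal F_{\bar x}$ whose image in $\mathcal F_{\bar \eta}$ vanishes, I would realize $s$ as a section of $\mathcal F$ on the strict henselization $\tilde X_{\bar x}$, so that its support $S$ is a proper closed subset of $\tilde X_{\bar x}$. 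If $S$ were nonempty, picking a generic point $y$ of an irreducible component of $S$, the restriction $s|_{\tilde X_{\bar y}}$ is supported at the closed point $\bar y$; by the vanishing of $\mathcal H^0(i_y^! \mathcal F)$, this forces $s_{\bar y} = 0$, contradicting $y$ being generic in the support. A Noetherian descent on $\dim S$ then gives $S = \emptyset$, hence $s = 0$.

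The final step is essentially bookkeeping. For each closed $z \in X$, the injection $\mathcal F_{\bar z} \hookrightarrow \mathcal F_{\bar \eta}$ is between spaces of equal dimension $r$ and is therefore an isomorphism. For a general $x \in X$, any closed specialization $z \in \overline{\{x\}}$ yields a factorization $\mathcal F_{\bar z} \to \mathcal F_{\bar x} \to \mathcal F_{\bar \eta}$ whose composite is this isomorphism, so $\mathcal F_{\bar x} \to \mathcal F_{\bar \eta}$ is surjective; combined with injectivity it is an isomorphism. The principal technical point demanding care is the Noetherian descent: one must verify in the \'etale-local setting that the restriction of a section supported on a subvariety to the strict henselization at a generic point of one of its irreducible components really produces a class in $\mathcal H^0(i_y^! \mathcal F)$, which follows from the distinguished triangle $i_y^! \mathcal F \to \mathcal F_{\bar y} \to R\Gamma(\tilde U_{\bar y}, \mathcal F)$ with $\tilde U_{\bar y} = \tilde X_{\bar y} \setminus \{\bar y\}$.
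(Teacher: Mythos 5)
Your proposal is correct and follows essentially the same strategy as the paper: the cosupport condition for the perverse sheaf $\mathcal F[d]$ forces every specialization map $\mathcal F_{\bar x}\to\mathcal F_{\bar\eta}$ to be injective (by localizing a putative section with proper closed support at a generic point of that support), equality of stalk dimensions upgrades this to an isomorphism, and Proposition \ref{Qllisse} concludes. The only substantive difference is that the paper first descends to an integral $\lambda$-adic model $K$ with torsion-free perverse reductions $K\otimes^L_D D/\lambda^n D$ and runs the support argument at the level of honest torsion sheaves, where ``section'' and ``support'' are unambiguous, whereas you argue directly with $\overline{\mathbb Q}_\ell$-coefficients; this is a matter of rigor in handling the pro-system rather than a gap in the idea.
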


\begin{proof}
For any $x\in X$, we may find a locally closed subset $S$ of $X$ containing $x$ so that $\mathcal F|_S$ is lisse. 
Let $x'$ be a closed point of $X$ lying in $S$. Then $\mathcal F_{\bar x}$ and 
$\mathcal F_{\bar x'}$ have the same dimension. So the stalks of $\mathcal F$ at all geometric points of $X$ are the same. 
Let $\eta$ be the generic point of $X$. 

\medskip
\noindent\emph{Claim}: Each specialization map
$$\mathrm{sp}: \mathcal F_{\bar x}\to \mathcal F_{\bar \eta}$$ is injective. 
\medskip	
	
Choose a finite extension
$E$ of ${\mathbb Q}_\ell$ contained in $\overline{\mathbb Q}_\ell$ such that $\mathcal F$ comes from $E$-sheaf $\mathcal F_E$. 
Let $D$ be the discrete valuation ring in $E$ and let $\lambda$ be an uniformizer.	
Assume $\mathcal F_E[d]=K\otimes_D E$, where $K$ is a complex of $\lambda$-adic sheaves. 
Replacing $K$ by ${^p\mathcal H}^0(K)$, we may assume $K$ is a perverse sheaf. 
Replacing $K$ by its quotient by the maximal torsion perverse subsheaf, which exists by \cite[4.0 (b)]{BBD}, 
we may assume $K$ also lies in the heart of the t-structure $[p^+_{1/2}]$ by \cite[3.3.4 (i)-(ii)]{BBD}. By \cite[4.0 (a)]{BBD}, the 
Verdier dual of $K$ is also perverse. So each $K\otimes^L_D D/\lambda^nD$ is a perverse sheaf. 
In particular $K\otimes^L_D D/\lambda^nD$ lies in $D^{[-d,0]}_{\mathrm{ctf}}(X, D/\lambda^nD)$. 
It suffices to show each
\[\mathrm{sp}: \mathcal H^{-d}(K\otimes_D D/\lambda^nD)_{\bar x}\to \mathcal H^{-d}(K\otimes_D D/\lambda^nD)_{\bar \eta}\]
is injective. If this is not true, we may find a connected \'etale neighborhood $U$ of $x$ and a section 
$s\in H^{-d}(U, K\otimes_D D/\lambda^nD)$ with support strictly smaller than $U$. Let $y$ be the generic point of an irreducible 
component of the support of $s$, and let $i_y:\mathrm{Spec}\,k(y)\to X$ be the canonical mophism. Then $s$ defines a nonzero section in 
$$i_{y}^!\mathcal H^{-d}(K\otimes_D D/\lambda^nD)\cong \mathcal H^{-d}(Ri_{y}^!(K\otimes_D D/\lambda^nD)),$$
which contradicts to the fact that $\mathcal H^j(Ri_{y}^!(K\otimes_D D/\lambda^nD))=0$ for 
$j<-\mathrm{dim}\overline{\{y\}}$. This proves the claim.
	
Since $\mathcal F_{\bar x}$ and $\mathcal F_{\bar \eta}$ have the same dimension, the claim implies that 
$\mathrm{sp}$ is an isomorphism. By Proposition \ref{Qllisse}, $\mathcal F$ is lisse.
\end{proof}

\end{document}